\newcommand{\R}{\mathbb R}
\renewcommand{\P}{\mathbb P}
\newcommand{\arcsinh}{\operatorname{arcsinh}}
\newcommand{\unif}{\mathrm{Unif}}
\newcommand{\Leb}{\mathrm{Leb}}
\newcommand{\norm}[1]{\| #1 \| }
\newtheorem{theorem}{Theorem}[section]
\newtheorem{lemma}[theorem]{Lemma}
\newtheorem{proposition}[theorem]{Proposition}
\def\calB{\mathcal B}
\def\calC{\mathcal C}
\def\calD{\mathcal D}
\def\calI{\mathcal I}
\def\calJ{\mathcal J}
\def\calL{\mathcal L}
\def\calM{\mathcal M}
\def\calO{\mathcal O}
\def\calP{\mathcal P}
\def\calQ{\mathcal Q}
\def\bE{\mathbb E}
\def\bN{\mathbb N}
\def\bP{\mathbb P}
\def\bR{\mathbb R}
\def\bT{\mathbb T}
\def\b{\beta}
\begin{document}

\begin{frontmatter}

\title{Large deviations for the empirical measure of the zig-zag process}
\runtitle{Empirical measure of the zig-zag process}


\author{\fnms{Joris} \snm{Bierkens}\ead[label=e1]{j.bierkens@vu.nl}}
\address{Department of Mathematics\\
Vrije Universiteit \\
De Boelelaan 1081a \\
1081 HV Amsterdam\\ 
The Netherlands \\
\printead{e1}}
\author{\fnms{Pierre} \snm{Nyquist}\ead[label=e2]{pierren@kth.se}}
\address{Department of Mathematics \\
KTH \\
100 44\\
Stockholm, Sweden\\
\printead{e2}}
\and
\author{\fnms{Mikola C.} \snm{Schlottke}\ead[label=e3]{M.Schlottke@tue.nl}}
\address{Department of Mathematics and Computer Science\\
Technische Universiteit Eindhoven \\
PO Box 513 \\
5600 MB Eindhoven \\
The Netherlands \\
\printead{e3}}

\runauthor{J.\ Bierkens, P.\ Nyquist, M.\ C.\ Schlottke}

\begin{abstract}
The zig-zag process is a piecewise deterministic Markov process in position and velocity space. The process can be designed to have an arbitrary Gibbs type marginal probability density for its position coordinate, which makes it suitable for Monte Carlo simulation of continuous probability distributions. An important question in assessing the efficiency of this method is how fast the empirical measure converges to the stationary distribution of the process. In this paper we provide a partial answer to this question by characterizing the large deviations of the empirical measure from the stationary distribution. Based on the Feng-Kurtz approach, we develop an abstract framework aimed at encompassing piecewise deterministic Markov processes in position-velocity space. We derive explicit conditions for the zig-zag process to allow the Donsker-Varadhan variational formulation of the rate function, both for a compact setting (the torus) and one-dimensional Euclidean space. Finally we derive an explicit expression for the Donsker-Varadhan functional for the case of a compact state space and use this form of the rate function to address a key question concerning the optimal choice of the switching rate of the zig-zag process.
\end{abstract}

\begin{keyword}[class=MSC]
\kwd[Primary ]{60F10}
\kwd[; secondary ]{60J25}
\end{keyword}

\begin{keyword}
\kwd{Large deviations} 
\kwd{empirical measure}
\kwd{piecewise deterministic Markov process}
\kwd{zig-zag process}
\end{keyword}

\end{frontmatter}

\section{Introduction}
The problem of sampling from a given high-dimensional probability distribution arises in a wide range of applications, for example chemistry, physics, statistics and engineering. Specifically, consider the task of sampling from a distribution $\pi$ with density with respect to Lebesgue measure given by
\begin{align*}
    \pi (y) \propto \textrm{exp}\{ - U(y) \}, \ y \in E,     
\end{align*}
for some potential function $U: E \to \bR$ and state space $E$. The most common approach is to use Markov chain Monte Carlo (MCMC) methods, which are now essential tools in areas such as computational statistics, molecular dynamics and machine learning, see e.g. \cite{RobertCasella2004,AsmussenGlynn07,AndrieuEtAl03} and references therein.

The idea behind MCMC is to construct a Markov process $Y_t$ with $\pi$ as the invariant measure and use the corresponding empirical measure to obtain approximations. For example, under ergodicity, for any observable $f \in L^1 (\pi)$ and $t>0$ large, $\frac{1}{t} \int _0 ^t f(Y_s) ds$ can be used to approximate the expected value $\int_E f (y) \pi (dy)$. 

Although many standard MCMC constructions, such as the Metropolis-Hastings algorithm \cite{Metropolis1953}, can be used to sample from essentially any target distribution $\pi$, most suffer from slow convergence to the invariant distribution or heavy computational costs per iteration. Designing new, efficient dynamics has therefore become an important research direction within applied probability.

Over the last decade piecewise deterministic Markov processes (PDMPs) \cite{davis1984piecewise} have emerged as a new tool for the numerical simulation of probability distributions, potentially mitigating both problems. The two main examples of such processes are the Bouncy Particle Sampler and the Zig-Zag Sampler \cite{BouchardCoteVollmerDoucet2017,bierkens2019zig}, after similar ideas appeared first in \cite{PetersDeWith2012} and  \cite{Monmarche2016}. The idea of using PDMPs extends the ubiquitous discrete time MCMC methodology towards a new continuous time approach, having several advantageous aspects. First, by construction PDMPs are non-reversible Markov processes, which typically results in a smaller asymptotic variance as compared to reversible methods (see e.g. \cite{Andrieu2019a} for a recent study). Furthermore, computation of PDMP trajectories on a computer may be carried out using a subsampling technique, reducing the computational effort required in each iteration; we refer to e.g. \cite{bierkens2019zig,Fearnhead2016a} for details of the computational aspects.

In order to employ this new PDMP methodology a solid understanding of mathematical properties of these methods is necessary. Whereas the theoretical properties of PDMPs have been an active research area in recent years, our understanding of the performance of the corresponding MCMC methods is still incomplete. In particular knowledge of the speed of convergence of time averages is essential in choosing the most suitable sampling technology for a particular problem and in tuning the parameters of the chosen method. In the spirit of recent work on empirical measure large deviations in the MCMC context \cite{dupuis2012infinite, rey2015irreversible}, we propose the use of large deviation results for studying and comparing the performance of PDMPs. 

In summary, the main contributions of this work are as follows:
\begin{itemize}
    \item A semigroup approach to establish the large deviations principle for the empirical measures of a class of Markov processes satisfying assumptions aimed at position-velocity PDMPs.
    \item The large deviations principle for empirical measures of the zig-zag process in both a compact and non-compact setting.
    \item Derivation of an explicit form of the rate function associated with the zig-zag process.
    \item Evaluation of the zig-zag rate function as a function of the additional switching rate $\gamma$, providing an answer to a key question about the switching rate.
\end{itemize}


The study of large deviations for empirical measures dates back to the work by Donsker and Varadhan \cite{donsker1975asymptoticI, donsker1975asymptoticII, DonskerVaradhan76}. In the simulation context it is well-known that for rare-event simulation, sample-path large deviations play an important r\^ole in evaluating and designing efficient algorithms ; see \cite{AsmussenGlynn07, Bucklew04, BudhirajaDupuis2019} and references therein. 
In contrast, empirical measure large deviations are much less explored as a tool for analysing Monte Carlo methods. Standard measures for analysing the efficiency of methods based on ergodic Markov processes include the spectral gap of the associated semigroup and the asymptotic variance for given observables, see for example \cite{Rosenthal2003, BedardRosenthal08,DiaconisHolmesNeal00,FrankeEtAl10,FrigessiEtAl93,HwangHwangSheu05, MengersenTweedie96, RobertsRosenthal04}. 
However these measures are not necessarily appropriate for studying the rate of convergence, as they only link indirectly to the empirical measure, the quantity of interest in Monte Carlo methods. Empirical measure large deviations on the other hand connect explicitly to the relevant properties, such as the transient behaviour of the underlying process. In a similar spirit, \cite{BirrellRB19} recently used concentration inequalities to obtain non-asymptotic performance guarantees for PDMPs.

To the best of our knowledge, the first results using empirical measure large deviations for analysing MCMC methods were \cite{plattner2011infinite, dupuis2012infinite}. Therein empirical measure large deviations, specifically the associated rate function, was proposed as a tool for analysing parallel tempering, one of the computational workhorses of the physical sciences, leading to a new type of simulation method (infinite swapping). In the subsequent work \cite{DDN2018} empirical measure large deviations were again used, combined with associated stochastic control problems, to analyse the convergence properties of these algorithms. Similarly, in \cite{rey2015irreversible} Rey-Bellet and Spiliopoulos use empirical measure large deviations to analyse the performance of certain non-reversible MCMC samplers.

The work by Donsker and Varadhan is the starting point for many results and application of empirical measure large deviations and their work has been extended in numerous directions, see e.g.\ \cite{DemboZeitouni1994, FengKurtz06, BudhirajaDupuis2019} for an overview and further references. However, naively applying the existing theory to PDMPs does not work since the transition probabilities are not sufficiently regular: for every $t > 0$ there is a positive probability that the process has not switched by time $t$, resulting in an atomic component of the Markov transition kernel. As a first step towards using empirical measure large deviations for analysing the performance of PDMPs we must therefore establish the relevant large deviations principles. 

The focus of this paper is to establish general large deviation results aimed at PDMPs and then specialize to the zig-zag process. In the process of proving the necessary large deviation results we consider a general class of Markov processes that can have position-velocity PDMPs, such as the bouncy particle and the zig-zag samplers, as special cases. In particular, this class includes processes that are not of diffusion type and non-reversible processes. When specialising to the zig-zag process, we derive an explicit form of the rate function, going beyond the variational form typical for results of Donsker-Varadhan-type. To the best of our knowledge this is the first instance where an explicit form of the rate function has been obtained for non-reversible processes that do not have a drift-diffusion character. 

A key question for using the zig-zag process for MCMC is whether or not it is advantageous for convergence to use the minimal (canonical) switching rates, or if one should allow for additional switches according to a fixed refreshment rate $\gamma >0$. Our analysis of the rate function associated with the zig-zag process allows us to give a partial answer to this question: in Section \ref{sec:rate} we establish that the rate function is decreasing as a function of the additional rate $\gamma$, establishing that from a large deviations perspective it is optimal to use the smallest possible rates, i.e.\ set $\gamma =0$. This goes in the opposite direction of the conclusion drawn from a spectral analysis (see \cite[Section 7.3]{BierkensVerduynLunel2019}), which shows at least a small benefit of increasing gamma beyond zero. This highlights the different nature of convergence of empirical averages (by studying large deviations or asymptotic variance, see e.g. \cite{Andrieu2019a,BierkensDuncan2016}) and convergence to equilibrium, using e.g. the spectral gap to describe rate of convergence; see \cite{Rosenthal2003} for more on this phenomenon. Our conclusion is in line with the earlier observation that having more non-reversibility increases the rate function \cite{rey2015irreversible}: one can view increasing $\gamma$ as decreasing the extent of non-reversibility inherent to the process.

Evaluation of the large deviation rate function for empirical measures, beyond the variational form given by Donsker and Varadhan, is typically a challenging task. For the diffusion setting, including both reversible and non-reversible processes, see \cite{dupuis2018large} and the references therein. In \cite{dupuis2015large} the authors consider reversible jump Markov processes and use stochatic control and weak convergence arguments to derive an explicit form of the rate function. Lastly, in the MCMC context, \cite{rey2015irreversible} consider diffusion processes on a compact manifold where the drift can be decomposed into sufficiently smooth reversible and non-reversible parts. The rate function can then be expressed in terms of the rate function of a related reversible diffusion and the solution of an elliptic PDE associated with the non-reversible component of the drift.

The proofs of the large deviation results in this paper are based on the general Hamilton-Jacobi approach to empirical measures developed by Feng and Kurtz in \cite[Chapter 12]{FengKurtz06}. We describe this approach, in the context of this paper, in more detail in Section \ref{sec:aux}.


The remainder of the paper is organised as follows. In Section \ref{sec:prelim} we give the necessary preliminaries: notation and relevant definitions, background on the zig-zag process and empirical measure large deviations. In particular we recall well-known large deviation results for empirical measures by Donsker and Varadhan. The main results are then presented in Section \ref{sec:main}. The section is split into the main assumptions and general large deviation statements (Section \ref{sec:gen_LDP}), large deviation results for the zig-zag process (Section \ref{sec:LDP_zigzag}) and an explicit expression of the rate function associated with the zig-zag process (Section \ref{sec:rate}). All proofs are deferred to Section \ref{sec:proofs}. 

\section{Preliminaries}
\label{sec:prelim}

\subsection{Notation and definitions}
\label{sec:notation}
Throughout the paper, $E$ will denote a complete separable metric space and $\calB (E)$ the relevant $\sigma$-algebra on $E$; unless otherwise stated this is taken to be the Borel $\sigma$-algebra. $C(E)$ and $C_b(E)$ are the spaces of functions $f : E \to \bR$ that are continuous and bounded continuous, respectively. The space of continuous and right-continuous functions from $[0, \infty)$ to $E$ is denoted by $C_E[0, \infty)$ and $D_E [0,\infty)$, respectively.  A sequence of functions $\{ f_n \}_n$ on $E$ converges {\it boundedly and uniformly on compacts} to a function $f$ if and only if $\sup _n \norm{f_n} < \infty$ and for each compact $K \subseteq E$,
	\begin{align*}
		\lim _{n\to \infty} \sup_{x \in K} | f_n(x) - f(x) | =0.
	\end{align*}
	This is denoted as $f = buc-\lim_{n\to \infty} f_n$.
	
	For a Markov process $Y = \{ Y_t: \ t \geq 0\}$, we denote by $S = \{ S(t): t \geq 0 \}$ the associated Markov semigroup. A semigroup $S(t)$ acting on $C(E)$ is {\it Feller continuous} if, for any $t$, $S(t) : C(E) \to C(E)$, {\it strongly continuous} if $S(t) f \to f$ as $t \to 0$ for any $f \in C(E)$ and {\it buc-continuous} if $buc-\lim _{t\to 0} S(t) f = f$ for $f \in C_b(E)$. 
	
	For an operator $L$, $\calD (L)$ denotes the domain of $L$. For functions in $\calD (L)$, $\calD ^+ (L)$ denotes those that are strictly positive and $\calD^{++}(L)$ those that are positive and uniformly bounded from below by a positive constant. For a given $L$ we use $B$ to denote the extended generator associated with $L$.


We use $\mathcal{P}(E)$ to denote the space of probability measures on $E$, and $\calP _c (E)$ is the subset of probability measures with compact support. Throughout the paper we equip $\calP (E)$ with the topology of weak convergence: $\rho _n \to \rho$ in this topology if
\begin{align*}
	\int _E f(x) \rho _n (dx) \to \int _E f(x) \rho (dx), \ n\to \infty, \ \ \forall f \in C_b (E).
\end{align*}
 A special case that will be considered several times is $\mathcal{P}(D_E [0,\infty))$, which is also equipped with the weak topology. For a process $\{ Y(t), t \geq 0 \}$ taking values in $E$ and $y \in E$, we denote by $\mathbb{P}_y \in \mathcal{P}(D_E [0,\infty) )$ the distribution of the process $Y(t)|_{t \geq 0}$ starting at~$y \in E$.
 
The set of positive Borel measures on $E$ is denoted by $\calM (E)$ and the set of finite Borel measures on $E$ are denoted by $\calM _f (E) \subset \calM (E)$. We let $\calL (E)$ denote the following subset of $\calM (E \times [0,\infty))$:
\begin{align*}
	\calL (E) = \{ z \in \calM (E \times [0, \infty)): \ z(E \times [0,t]) = t, \ t \geq 0 \}. 
\end{align*}
The set $\calL (E)$ is endowed with the topology of weak convergence on bounded time intervals: for $\{ \rho_n \} \subset \calL (E)$, $\rho_n \to \rho$ if for all $f \in C_b(E \times [0, \infty))$ and all $t \geq 0$,
\begin{align*}
	\int _{E \times [0,t]} f(x,s) d\rho_n(x,s) \to \int _{E \times [0,t]} f(x,s) d\rho(x,s).
\end{align*}
Then $\calL (E)$ is the set of Borel-measures on $E \times [0, \infty)$  of the form 
\begin{align*}
	d\rho (x,t) = \mu _t (dx) dt,
\end{align*}
for probability measures $\mu_t \in \calP (E)$. That is , for every $\rho \in \calL (E)$, there exists a measurable path $s \mapsto \mu_s \in \calP (E)$ such that 
\begin{align*}
	\rho (A \times [0,t] ) = \int _0 ^t \mu _s (A)ds, \ \textrm{ for any } A \in \calB (E), \ t >0.
\end{align*}

For two real-valued functions $f$ and $g$, $f \sim g$ as $x \to x_0$ means that they are asymptotically equivalent in the limit $x \to x_0$:
\begin{align*}
	\frac{f(x)}{g(x)} \to 1, \ \ \textrm{as } x \to x_0. 
\end{align*}


\subsection{Large deviations for empirical measures}
\label{sec:intro_LDP}
Consider a Markov process $Y = \{ Y_t: t \geq 0 \}$ taking values in a complete separable metric space $E$, with associated generator $L : \mathcal{D}(B) \subseteq C_b(E) \to C_b(E)$ and semigroup $S(t)$. 
The {\it empirical measure} $\eta_t$ associated with $Y_t$ is the stochastic process with values in $\mathcal{P}(E)$ defined by
\begin{equation*}
\eta_t (A) = \frac{1}{t} \int_0^t \boldsymbol{1}_A(Y_s)ds,\quad A \in \mathcal{B}(E).
\end{equation*}
%
%
Empirical measures play an important r\^ole in, for example, the settings of MCMC methods and steady-state simulations, via the pairing of measures and observables: For a probability measure $\mu \in \mathcal{P}(E)$ and a function $V \in C_b(E)$, we write
\begin{equation*}
\mu(V) = \int_E V(y) d\mu(y)
\end{equation*}
for the pairing of measures and observables. For the empirical measure $\eta_t$, this pairing corresponds to time averages,
\begin{equation}
\label{eq:etaV}
\eta_t(V) = \frac{1}{t}\int_0^t V(Y_s) ds.
\end{equation}
%
%
If there is an invariant measure $\pi \in \mathcal{P}(E)$ associated with the generator $L$, ergodicity of the process $Y_t$ will ensure the convergence $\eta _t \to \pi$ as $t\to \infty$, w.p.\ 1 in $\calP (E)$, from which it follows that for any $V \in C_b(E)$,
\begin{align*}
	\eta_t(V) \to \pi(V)	\quad \text{ as } t \to \infty, \; \mathbb{P}-a.s.
\end{align*}
Thus, time averages such as \eqref{eq:etaV} are precisely what is used to form approximations in Monte Carlo methods and there is a direct link between the performance of such simulation methods and the properties of the empirical measure.

The theory of large deviations for empirical measures is concerned with deviations of $\eta _t$ from $\pi$ as $t$ grows large. The gist of the so-called large deviations principle is that for any $\rho \in \mathcal{P} (E)$, for large $t$
\begin{align*}
	\bP_y (\eta _t \approx \rho) \simeq \exp \left\{ -t I(\rho) \right\},
\end{align*}
where the function $I : \calP (E) \to [0,\infty]$ is the rate function associated with the process. The formal definition underlying all of large deviation theory, making the previous display rigorous, is as follows:
	The sequence $\{ \eta _t \} _{t \geq 0}$ satisfies a {\it large deviations principle (LDP)} with speed $t$ and {\it rate function} $I$, if $I$ is lower semicontinuous, has compact sublevel-sets and for any measurable set $A \subseteq \calP (E)$,
	\begin{align*}
		- \inf _{\mu \in A^\circ} I (\mu) &\leq \liminf _{t\to \infty} \frac{1}{t} \log \bP \left(\eta _t \in A^\circ \right) \\
		&\leq \limsup _{t\to \infty} \frac{1}{t} \log \bP \left( \eta _t \in \bar A \right) \leq -\inf _{\mu \in \bar A} I(\mu),
	\end{align*}
where $A^\circ$ and $\bar A$ is the interior and closure of the set $A$.

Under relatively mild conditions on the dynamics of the process $Y$ the rate function will be strictly convex and satisfy $I(\mu) =0$ if and only if $\mu = \pi$. Thus, the rate function characterises the exponential rate of decay of probabilities of sets not including the invariant distribution $\pi$. Moreover the rate function can be used to characterise {\it how} events may occur - for sets $A$ that do not include $\pi$, the minimisers of $I$ over $A$ represent the behaviour $\eta _t$ is most likely to exhibit if $A$  occurs. 

For empirical measures of Markov processes, the rate function associated with an LDP can often be expressed using a variational form, obtained by Donsker \& Varadhan \cite{donsker1975asymptoticI}, involving the generator $L$ of the underlying process. For the compact setting, they proved the following result in \cite{donsker1975asymptoticI}.

\begin{theorem}[{\cite[Theorem 3]{donsker1975asymptoticI}}]\label{thm:proving_LDP:DV_I}
Take $E$ to be a compact, complete separable metric space. Let $S(t)$ be a Markov semigroup acting on $C(E)$ equipped with the supremum norm, $L$ is the generator associated to $S$. Assume the following:
\begin{enumerate}[label =(DV.\arabic*)]
\item\label{item:thm_DV_I:Feller} The semigroup is Feller continuous and strongly continuous. 
\item\label{item:thm_DV_I:reference_measure} There exists a probability measure $\lambda \in \mathcal{P}(E)$ such that for each $t > 0$ and $x\in E$, the transition probabilities $P(t,x,dy)$ are absolutely continuous with respect to $\lambda$, that is 
\begin{equation*}
P(t,x,dy) = p(t,x,y) \lambda(dy),
\end{equation*}
for some $p$ with $0< a(t) \leq p(t,x,y) \leq A(t) < \infty$.
\end{enumerate}
Then the associated sequence $\{\eta_t\}_{t > 0}$ satisfies a large deviation principle in $\mathcal{P}(E)$, with rate function $\mathcal{I}: \mathcal{P}(E) \to [0,\infty]$ given by
\begin{equation}\label{eq:proving_LDP:DV_rate_function}
\mathcal{I}(\mu) = -\inf_{u \in \mathcal{D}^+(L)}\int_E \frac{Lu}{u} d\mu.
\end{equation}
\end{theorem}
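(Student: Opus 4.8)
\emph{Proof (sketch).}
Since $E$ is compact, $\mathcal{P}(E)$ is compact and metrizable in the weak topology, so $\{\eta_t\}_{t>0}$ is automatically exponentially tight; it therefore suffices to prove the upper bound for closed sets together with the local lower bound $\liminf_{t\to\infty}\tfrac1t\log\bP_y(\eta_t\in O)\ge-\mathcal{I}(\mu)$ for every $\mu\in\mathcal{P}(E)$ and every open $O\ni\mu$. Note also that on a compact space a continuous strictly positive function is bounded away from $0$, so $\mathcal{D}^{+}(L)=\mathcal{D}^{++}(L)$ here and the tilting arguments below may use either. The plan is the classical Donsker--Varadhan route, organised around the Feynman--Kac semigroup and its principal eigenvalue.

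\textbf{Step 1: exponential moments and the principal eigenvalue.}
For $V\in C(E)$ consider the Feynman--Kac semigroup $S_V(t)f(y)=\bE_y[f(Y_t)\exp(\int_0^tV(Y_s)\,ds)]$. By \ref{item:thm_DV_I:Feller} it acts on $C(E)$, and the two-sided density bound in \ref{item:thm_DV_I:reference_measure} gives, for $f\ge0$, $e^{-t\norm{V}}a(t)\int f\,d\lambda\le S_V(t)f(y)\le e^{t\norm{V}}A(t)\int f\,d\lambda$, so $S_V(t)$ is strongly positive with a kernel bounded above and below by positive constants, and some iterate is compact (e.g.\ Hilbert--Schmidt on $L^2(\lambda)$). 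Krein--Rutman then yields a simple, isolated, maximal eigenvalue $e^{\Lambda(V)}$ with strictly positive bounded eigenfunction $u_V$, i.e.\ $(L+V)u_V=\Lambda(V)u_V$; since $V$ is bounded, $u_V\in\mathcal{D}^{+}(L)$. The same bounds give the starting-point-independent growth rate $\Lambda(V)=\lim_{t\to\infty}\tfrac1t\log\bE_y[\exp(t\,\eta_t(V))]$, and first-order perturbation theory for the simple eigenvalue shows $V\mapsto\Lambda(V)$ is finite, convex, and G\^ateaux differentiable on $C(E)$.

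\textbf{Step 2: upper bound, and $\mathcal{I}=\Lambda^{*}$.}
For closed $A\subseteq\mathcal{P}(E)$ and $V\in C(E)$, Chebyshev's inequality gives $\bP_y(\eta_t\in A)\le e^{-t\inf_{\mu\in A}\mu(V)}\,\bE_y[\exp(t\,\eta_t(V))]$, hence $\limsup_t\tfrac1t\log\bP_y(\eta_t\in A)\le\Lambda(V)-\inf_{\mu\in A}\mu(V)$. A standard finite-covering argument (compactness of $A$, continuity of $\mu\mapsto\mu(V)$, finiteness of $\Lambda$ on $C(E)$) then produces the abstract upper bound with rate function $\Lambda^{*}(\mu):=\sup_{V\in C(E)}[\mu(V)-\Lambda(V)]$. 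To identify $\Lambda^{*}$ with $\mathcal{I}$: for $u\in\mathcal{D}^{+}(L)$, a Dynkin-type computation shows $N_t^{u}:=\frac{u(Y_t)}{u(Y_0)}\exp(-\int_0^t\frac{Lu}{u}(Y_s)\,ds)$ is a positive supermartingale with $\bE_y[N_t^{u}]\le1$, whence $\bE_y[\exp(t\,\eta_t(-Lu/u))]\le\max u/\min u$ and so $\Lambda(-Lu/u)\le0$; inserting $V=-Lu/u$ into $\Lambda^{*}$ gives $\Lambda^{*}(\mu)\ge-\mu(Lu/u)$, hence $\Lambda^{*}\ge\mathcal{I}$. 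Conversely, for each $V$ the eigenfunction $u_V$ of Step 1 lies in $\mathcal{D}^{+}(L)$ and $Lu_V/u_V=\Lambda(V)-V$, so $\mathcal{I}(\mu)\ge-\mu(Lu_V/u_V)=\mu(V)-\Lambda(V)$; taking the supremum over $V$ gives $\mathcal{I}\ge\Lambda^{*}$. Thus $\mathcal{I}=\Lambda^{*}$, which is convex, lower semicontinuous, and has compact sublevel sets (closed subsets of the compact space $\mathcal{P}(E)$).

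\textbf{Step 3: lower bound, and the main difficulty.}
Fix $\mu$ with $\mathcal{I}(\mu)<\infty$ and open $O\ni\mu$ (the case $\mathcal{I}(\mu)=\infty$ being trivial). Because $\Lambda$ is G\^ateaux differentiable, the attained minimiser of $\Lambda^{*}$ is an exposed point and the abstract convex (G\"artner--Ellis/Baldi type) lower bound gives $\liminf_t\tfrac1t\log\bP_y(\eta_t\in O)\ge-\Lambda^{*}(\mu)=-\mathcal{I}(\mu)$; equivalently, and closer to Donsker--Varadhan's original argument, one Doob $h$-transforms the tilted dynamics by $u_V$ to obtain an ergodic Markov process whose empirical measure converges to a measure $\pi_V$ that can be steered to $\mu$ by varying $V$, and then controls the relative entropy of the tilted path law over $\bP_y$ on $[0,t]$ by $t\,\Lambda^{*}(\pi_V)+o(t)$ before invoking the ergodic theorem for the transformed process. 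The crux of the whole argument is Step 1: turning the uniform density bounds \ref{item:thm_DV_I:reference_measure} into the complete spectral picture for $S_V$ — strong positivity and compactness of an iterate, a simple isolated dominant eigenvalue, a \emph{bounded} strictly positive eigenfunction lying in $\mathcal{D}(L)$, and the $y$-independent rate $\Lambda(V)$ with its differentiability — since every other step is routine large-deviation bookkeeping that feeds off exactly this quantitative ergodicity, with \ref{item:thm_DV_I:Feller} needed only to make the operators act properly on $C(E)$.
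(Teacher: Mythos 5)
A preliminary remark: the paper does not prove this statement at all — it is imported verbatim as Theorem 3 of \cite{donsker1975asymptoticI} and used purely as background motivation — so your proposal cannot be checked against an internal argument, only on its own merits. Your architecture (Feynman–Kac semigroup $S_V(t)$, principal eigenvalue $\Lambda(V)$, the identity $\mathcal I=\Lambda^{*}$, then upper bound by Chebyshev and lower bound by tilting) is the classical Donsker–Varadhan route, and it is in fact the same mechanism the paper later axiomatizes as Assumption \ref{item:thm_LDP_compact:principal_eigenvalue} and verifies for the zig-zag process via a Krein–Rutman/resolvent argument. Steps 1 and 2 are essentially sound, with two pieces of bookkeeping you should not gloss: you invoke compactness (Hilbert–Schmidt on $L^2(\lambda)$) but then need a strictly positive eigenfunction in $C(E)\cap\mathcal D(L)$; under \ref{item:thm_DV_I:reference_measure} this is obtainable (e.g.\ Birkhoff contraction of the Hilbert projective metric on the cone of nonnegative continuous functions, using $a(t)e^{-t\|V\|}\leq q_V(t,\cdot,\cdot)\leq A(t)e^{t\|V\|}$, plus the bounded-perturbation identification of the generator of $S_V$ as $L+V$ on $\mathcal D(L)$), but it is an argument, not a citation of Krein–Rutman on the wrong space.

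The genuine gap is Step 3, which you declare routine while locating the crux in Step 1; in Donsker and Varadhan's proof the balance is the opposite. G\^ateaux differentiability of $\Lambda$ does not make every $\mu$ with $\Lambda^{*}(\mu)<\infty$ an exposed point with exposing hyperplane in $C(E)$, so a Baldi/G\"artner–Ellis lower bound does not apply off the shelf in the infinite-dimensional setting $\mathcal P(E)\subset C(E)^{*}$. In the $h$-transform formulation, your phrase ``$\pi_V$ can be steered to $\mu$ by varying $V$'' is precisely the assertion that invariant measures of tilted processes approximate an arbitrary $\mu$ of finite rate \emph{with matching cost}, i.e.\ that the supremum defining $\Lambda^{*}(\mu)$ is (approximately) attained; equivalently, one must approximate $\mu$ by suitable nice measures, exhibit the tilt achieving them, and control the relative entropy per unit time, and this is exactly where condition \ref{item:thm_DV_I:reference_measure} is used in earnest beyond the spectral picture. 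Your sketch contains no argument for this approximation, so as written the lower bound — the substantive half of the theorem — is not established.
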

%
The theorem applies in particular to drift-diffusions taking values in a compact space. Roughly speaking, for such processes, with reasonable coefficients, the Feller-continuity is satisfied and the diffusive part ensures absolute continuity with respect to a volume measure $dx$. In \cite{rey2015irreversible} Rey-Bellet and Spiliopoulos use this result to study performance of specific non-reversible MCMC methods based on drift-diffusions; their Assumption~(H) allows for an application of Theorem \ref{thm:proving_LDP:DV_I}. 

%
Condition~\ref{item:thm_DV_I:reference_measure} is a reasonable transitivity assumption for processes that involve a diffusive term. However, this condition excludes many interesting examples, such as continuous-time jump processes, see e.g.\ \cite{dupuis2015large}. 
The issues highlighted therein are present also for the zig-zag process on $\bR \times \{ \pm 1\}$: in a sense, the absence of a diffusive operator excludes the possibility of finding a suitable reference measure.
%
%

If the process $Y_t$ is reversible with respect to the reference measure, that is $p(t,x,y) = p(t,y,x)$, then the rate function takes a more explicit form, see e.g.\ Theorem~5 in~\cite{donsker1975asymptoticI}. However, our interests are explicitly in non-reversible processes, such as the zig-zag process, and therefore such representations are not available. 
%
%

In conclusion, while Theorem \ref{thm:proving_LDP:DV_I} can be a starting point for many drift-diffusion processes, it is not a sufficient tool for many other interesting processes, including the position-velocity PDMPs. In order to use large deviation results to study performance of such MCMC algorithms we must first overcome this obstacle and establish the relevant large deviations principles.



In~\cite{dupuis2018large}, Dupuis and Lipshutz consider large deviations of empirical measures of $\mathbb{R}^d$-valued drift-diffusions. Their Condition~2.2 corresponds to a type of stability criterion in terms of a Lyapunov function. A transitivity property similar to Condition~\ref{item:thm_DV_I:reference_measure} of Theorem~\ref{thm:proving_LDP:DV_I} is satisfied due to the diffusive part, and they prove a different, explicit representation of the rate function, assuming only standard regularity conditions on the coefficients. In particular, this representation holds for non-reversible drift-diffusions. 

\subsection{The zig-zag process}
\label{sec:zig-zag}

In this section we will discuss very concisely the zig-zag process. As discussed in the introduction the zig-zag process is an example of a piecewise deterministic Markov process \cite{davis1984piecewise}. As the name indicates, a piecewise deterministic Markov process is a Markov process with deterministic trajectories, in between event times at which the process makes a discontinuous change. 

For the (one-dimensional) zig-zag process, the state space is either $E = \R \times \{\pm 1\}$ or $E = \mathbb{T} \times \{\pm 1\}$ and a typical state is denoted in this paper by $(x,v)$. Here $x$ represents a position and $v$ a velocity. Starting from $(x,v)$ at time $t= T_0 := 0$, the dynamics of a Markov process $(X_t, V_t)$ are given, until the first (random) event time $T_1 > 0$,  by \[ (X_t, V_t) = (x +t v, v), \quad 0 \leq t < T_1.\] In other words, the position changes according to the constant velocity $v$, which itself does not change in between event times. The random time $T_1$ at which the first event happens is distributed according to
\[ \P_{x,v}(T_1 \geq t) = \exp \left( -\int_0^t \lambda(X_s, V_s) \, d s \right) = \exp \left( -\int_0^t \lambda(x + vs, v) \, d s \right),\]
where $\lambda : E \rightarrow [0,\infty)$ is the \emph{event rate}, which is in the case of the zig-zag process also known as the \emph{switching rate}, which we will discuss in more detail below. At an event time $T$ the velocity changes sign and the position remains unchanged: 
\[ V_{T_1} = -V_{T_1-} \quad \text{and} \quad X_{T_1} = X_{T_1-}.\]
From the time $T_1$ onward, the process repeats the dynamics described above: for $i = 1, 2, \dots$
\begin{align*} & X_t = X_{T_{i-1}}  +(t- T_{i-1}) V_{T_{i-1}}, \quad V_t = V_{T_{i-1}}, \quad T_{i-1}\leq t < T_{i}, \\
& \P(T_i \geq t \mid T_{i-1}, X_{T_{i-1}}, V_{T_{i-1}}) = \exp \left( -\int_{T_{i-1}}^t \lambda(X_{T_{i-1}} + s V_{T_{i-1}}, V_{T_{i-1}}) \, d s \right), \\
& X_{T_i} = X_{T_i-}, \quad V_{T_i} = -V_{T_i-}.
\end{align*}
The switching rate $\lambda : E \rightarrow \R$ is assumed to be continuous. If $\lambda$ satisfies
\begin{equation} \label{eq:switching-intensity-condition-1} \lambda(x, 1) - \lambda(x,-1) = U'(x), \end{equation} for a continuously differentiable function $U$, then the measure $\pi(d x, d v) = \exp(-U(x)) \, d x \otimes \unif_{\pm 1}(dv)$ is a stationary measure for $(X_t, V_t)$.  An equivalent condition to~\eqref{eq:switching-intensity-condition-1} is that, for some continuous non-negative function $\gamma(x)$ we have
\begin{equation}
\label{eq:switching-intensity-condition-2}    \lambda(x,v) = \max (0, v U'(x)) + \gamma(x).
\end{equation}
Here $(\max 0, vU'(x))$ is called the \emph{canonical switching intensity}, and $\gamma$ is called the \emph{excess switching intensity} or \emph{refreshment rate}.
We study the dependence of the empirical measure of the process $(X_t, V_t)$ on $\gamma$ in Section~\ref{sec:rate}.

The zig-zag process can be extended in a natural way to a multi-dimensional process in $\R^d \times \{\pm 1\}^d$; see \cite{bierkens2019zig, bierkens2019ergodicity}. Since we focus in this paper on properties of the one-dimensional process we will not discuss this extension here. The ergodic properties of the zig-zag process are essential in order to establish a large deviations theory for the empirical measure. Under mild conditions it can be shown that the zig-zag process is (exponentially) ergodic; see \cite{bierkens2017piecewise} for the one-dimensional case, and \cite{bierkens2019ergodicity} for the multi-dimensional zig-zag process.

By \cite[Theorem 26.14]{Davis1993}, the extended generator of the zig-zag process is given by
\[ B f(x,v) = v \partial_x f(x,v) + \lambda(x,v) [ f(x,-v) - f(x,v)], \quad (x,v) \in E,\]
with 
\[ \mathcal D(B) = \{ f : E \rightarrow \R : f(\cdot,v)  \, \text{is absolutely continuous for } v = \pm 1\}.\] 

\section
{Large deviations for empirical measures of PDMPs}
\label{sec:main}

In this section we present our main results: we establish a large deviations principle for the empirical measure of a Markov process under fairly general assumptions which include in particular examples of position-velocity PDMPs such as the zig-zag process. After obtaining these general results we focus for concreteness on the zig-zag process, for which we verify the stated assumptions. We also give an explicit characterisation of the corresponding rate function, a necessary step towards using the LDP for analysing the performance and properties of approximations based on the zig-zag process. To streamline the presentation we split the analysis according to whether we consider a compact or non-compact state space $E$. 
\smallskip

To facilitate the proof of the LDP for the empirical measures, we first formulate in Section~\ref{sec:gen_LDP} two more general large deviations results (compact and non-compact setting) for empirical measures arising from certain continous-time stochastic processes. We then show that the zig-zag process is a special case in this class of processes in Section \ref{sec:LDP_zigzag}. It is worth to emphasise that we do not aim for greatest generality in the large deviations results Theorems \ref{thm:proving_LDP:LDP_compact} and \ref{thm:proving_LDP:LDP_non_compact}. Rather, we settle for conditions that make the general conditions of Lemma \ref{lemma:FengKurtz} more transparent and concrete whilst still allowing us to prove the large deviations principle for the empirical measures of the zig-zag process.

\subsection{Results in an abstract setting aimed at position-velocity PDMPs}
\label{sec:gen_LDP}

Before we specialize to the zig-zag process, we consider the setting described in Section~\ref{sec:notation} to PDMPs: $Y$ is a Markov process taking values in a locally compact complete separable metric space $E$, 
with associated semigroup $S(t)$ and 
infinitesimal generator $L$. We also make use of the extended generator $B$; see \cite{ethier2009markov} and Section~\ref{sec:zig-zag}. Typically, $E=\mathbb{R}^d\times\mathcal{S}$ where $\mathbb R^d$ is the state space for a position variable $X_t$ and $\mathcal{S}$ is a compact set that models the state space of the velocity variables $V_t$. For the zig-zag process, $\mathcal{S}=\{\pm 1\}^d$, and for the Bouncy Particle Sampler $\mathcal S$ can be taken to be the $(d-1)$-dimensional unit sphere. Note that for $d =1 $ these two choices coincide.
\smallskip

The following are the assumptions we will impose in order to establish an LDP for the empirical measures of the process $Y$. Not all conditions are required at the same time: we impose conditions~\ref{item:thm_LDP_compact:Feller}, \ref{item:thm_LDP_compact:tight} and~\ref{item:thm_LDP_compact:principal_eigenvalue} for the compact case and \ref{item:thm_LDP_compact:Feller}, \ref{item:thm_LDP_compact:tight}, \ref{item:thm_LDP_non_compact:Lyapunov} and~\ref{item:thm_LDP_non_compact:mixing} for the non-compact case.
\begin{enumerate}[label = (A.\arabic*)]
\item\label{item:thm_LDP_compact:Feller} The semigroup $S(t)$ is a
Feller semigroup.
\item\label{item:thm_LDP_compact:tight} For any compact set $K\subseteq E$, the set of measures $\{\mathbb{P}_y: y \in K\}$ is tight in $\mathcal{P}(D_E[0,\infty))$.
\item\label{item:thm_LDP_compact:principal_eigenvalue} For any function $V \in C(E)$, there exists a function $u \in \mathcal{D}^+(L)$ and a real eigenvalue $\beta \in \mathbb{R}$ such that pointwise on $E$,
\begin{equation*}
(V+L) u = \beta u.
\end{equation*}
\item\label{item:thm_LDP_non_compact:Lyapunov} There exist two non-negative functions $g_1,g_2 \in C(E;[0,\infty))$ such that: 
	\begin{enumerate}
		\item\label{item:thm_LDP_non_compact:Lyapunov1} 
		For any $\ell \geq 0$, the sublevel-sets $\{g_i\leq \ell\}$ are compact and $g_i(y)\to\infty$ as $|y|\to\infty$,
		\item\label{item:thm_LDP_non_compact:Lyapunov2} 
		$g_1(y)/g_2(y) \to 0$ as $|y|\to\infty$,
		\item\label{item:thm_LDP_non_compact:Lyapunov3} $e^{g_i} \in \mathcal D(B)$, for any $c\in\mathbb{R}$ the superlevel-sets $\{y\in E\,:\,e^{-g_i(y)} B(e^{g_i})(y)\geq c\}$ are compact, and $e^{-g_1(y)} B(e^{g_1})(y)\to-\infty$ as $|y|\to\infty$,
	\end{enumerate}
	where we recall that $B$ is the extended generator of $Y$. We write $|y_n|\to\infty$ if $d(y_n,z)\to\infty$ for any point $z\in E$.
	\item\label{item:thm_LDP_non_compact:mixing} For any two compactly supported probability measures $\nu_1,\nu_2 \in \mathcal{P}_c(E)$, there exist constants $T,M > 0$ and measures $\rho_1,\rho_2 \in \mathcal{P}([0,T])$ such that for all Borel sets $A \subseteq E$,
	\begin{equation}\label{eq:proving_LDP:LDP_non_compact:mixing}
	\int_0^T\int_E P(t,y,A)\, d\nu_1(y)d\rho_1(t) \leq M \int_0^T\int_E P(t,z,A)\, d\nu_2(z)d\rho_2(t),
	\end{equation}
	where $P(t,y,dy')$ denotes the transition probabilities associated to $Y_t$.

\end{enumerate}
Conditions \ref{item:thm_LDP_compact:Feller}-\ref{item:thm_LDP_compact:principal_eigenvalue} are enough to prove Theorem \ref{thm:proving_LDP:LDP_non_compact}, the large deviations principle in a compact setting. In this setting conditions \ref{item:thm_LDP_compact:Feller} and \ref{item:thm_LDP_compact:tight} replace Condition~\ref{item:thm_DV_I:Feller} of Theorem \ref{thm:proving_LDP:DV_I};  Condition \ref{item:thm_LDP_compact:tight} can also be weakened to $\mathbb{P}_{y_n} \to \mathbb{P}_y$ in $\mathcal{P}(D_E[0,\infty))$ whenever $y_n \to y$. Together Conditions \ref{item:thm_LDP_compact:Feller} and \ref{item:thm_LDP_compact:tight} imply strong continuity of the semigroup $S$ (see e.g. \ \cite[Remark~11.22]{FengKurtz06}). 
\smallskip

As pointed out in Section \ref{sec:intro_LDP}, the processes we have in mind do not satisfy a transitivity condition similar to Condition \ref{item:thm_DV_I:reference_measure} of Theorem \ref{thm:proving_LDP:DV_I}. In the compact setting this can be replaced by condition~\ref{item:thm_LDP_compact:principal_eigenvalue}, which corresponds to a principal-eigenvalue problem for the operator $L+V$. In compact settings, such eigenvalue problems can usually be solved if the coefficients of the generator are regular enough. In Section \ref{sec:LDP_zigzag} we show that this is the case for the zig-zag process taking values in the compact torus. 
\smallskip

In the non-compact setting, the eigenvalue problem \ref{item:thm_LDP_compact:principal_eigenvalue} is replaced by conditions \ref{item:thm_LDP_non_compact:Lyapunov} and \ref{item:thm_LDP_non_compact:mixing}.
%
Condition~\ref{item:thm_LDP_non_compact:Lyapunov} is closely related to the stability conditions assumed in~\cite{donsker1976asymptotic} and~\cite{dupuis2018large}. Because $e^{g_1}$ is unbounded, formally we have to use the extended generator $B$ instead of the infinitesimal generator $L$ to formulate Condition~\ref{item:thm_LDP_non_compact:Lyapunov3}. The same problem occurs in Condition~2.2 of~\cite{dupuis2018large}: for a diffusion process $Y_t$ in $\mathbb{R}^d$ satisfying
$
dY_t = -Y_t dt + dW_t,
$
the second-order differential operator 
\begin{equation*}
Bf (x) = \frac{1}{2} \Delta f(x) - x\nabla f(x)
\end{equation*}
acting on $C^2(\mathbb{R}^d)$ is well-defined and is equal to the infinitesimal generator $L$ of the process when restricted to $C^2_b(\mathbb{R}^d)$. With $g_1(x) = \delta |x|^2/2$, the function $e^{-g_1(x)}Be^{g_1}(x)$ goes to minus infinity for $\delta$ small enough. A second Lyapunov function is $g_2(x) = \sqrt{1+|x|^2}$. In the context of the zig-zag process, since $E$ is of the form $E=\mathbb{R}^d\times\{\pm 1\}^d$, using continuous functions that grow to infinity when fixing the velocity variable is sufficient for obtaining compact level sets.
\smallskip

Condition~\ref{item:thm_LDP_non_compact:mixing} plays the role of a transitivity assumption in the non-compact case. While it is feasible to solve a principal-eigenvalue problem for a compact state space, this is much more difficult in the non-compact setting. It would require deriving not only the eigenvalue itself, but also the corresponding eigenfunction on a non-compact space, for which general existence results are not available. 
In this setting the transitivity condition \ref{item:thm_DV_I:reference_measure} of Theorem \ref{thm:proving_LDP:DV_I} is instead partly replaced by the mixing property \ref{item:thm_LDP_non_compact:mixing}. It is a slightly weakened version of Condition B.8 in \cite{FengKurtz06}, in that it requires the transition probabilities to be comparable only for compactly supported initial conditions $\nu_1,\nu_2 \in \mathcal{P}_c(E)$. This weakening is crucial for the results in this paper, because the stronger condition fails to be true for the zig-zag process if for instance $\nu_1 = \mathcal{N}(0,1) \otimes \unif_{\pm 1}$ and $\nu_2 = \delta_y$. In that example, while the left-hand side of \eqref{eq:proving_LDP:LDP_non_compact:mixing} is in this case positive for any Borel set $A \subseteq E$, the right-hand side can become zero. This is because the zig-zag process has finite speed propagation, so that for arbitrary $T > 0$, if $\mathrm{dist}(A,y) > T$, then the probability of transitioning from $y$ into $A$ is zero. However for compactly supported measures the condition is satisfied. We verify the conditions of Theorem \ref{thm:proving_LDP:LDP_non_compact} for the zig-zag process in Section \ref{sec:LDP_zigzag}. 
\smallskip

We are now ready to state the two general large deviations results of this paper, which in Section \ref{sec:LDP_zigzag} will be used to derive the large deviations principle for the empirical measures of the zig-zag process. We start with the compact setting.
\begin{theorem}\label{thm:proving_LDP:LDP_compact}
Let $E$ be compact, $S(t)$ a Markov semigroup acting on $C(E)$ equipped with the supremum norm, and $Y_t$ the corresponding Markov process. Let $L$ be the infinitesimal generator of $Y_t$, and assume that $Y_t$ solves the associated martingale problem. Suppose Assumptions \ref{item:thm_LDP_compact:Feller}, \ref{item:thm_LDP_compact:tight} and \ref{item:thm_LDP_compact:principal_eigenvalue} hold. Then 
the empirical measures $\{\eta_t\}_{t > 0}$ associated to $Y_t$ satisfy a large deviations principle in $\mathcal{P}(E)$ with rate function $\mathcal{I}:\mathcal{P}(E) \to [0,\infty]$ given by~\eqref{eq:proving_LDP:DV_rate_function}.
\end{theorem}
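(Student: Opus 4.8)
The plan is to deduce the statement from Lemma~\ref{lemma:FengKurtz}, which distils the Feng--Kurtz Hamilton--Jacobi approach into a set of verifiable hypotheses that, once met, yield the LDP for $\{\eta_t\}_{t>0}$ in $\calP(E)$ together with the Donsker--Varadhan form of the rate function. In that approach one lifts $\eta_t$ to its occupation measure in $\calL(E)$ and treats it as a slowed-down measure-valued process with speed $t$; the LDP then rests on three ingredients, which I would supply as follows. Exponential tightness of $\{\eta_t\}$ in $\calP(E)$ is immediate, since $E$ compact makes $\calP(E)$ compact. The required regularity of $S(t)$ --- Feller continuity and strong continuity --- follows from Assumption~\ref{item:thm_LDP_compact:Feller} together with Assumption~\ref{item:thm_LDP_compact:tight}, using \cite[Remark~11.22]{FengKurtz06}. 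The remaining, substantive ingredient is the identification of the limiting logarithmic moment generating functional $V\mapsto\beta(V)$, and this is where Assumption~\ref{item:thm_LDP_compact:principal_eigenvalue} enters.

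The key step is therefore the following. Fix $V\in C(E)$ and take $u\in\mathcal{D}^+(L)$, $\beta\in\bR$ with $(V+L)u=\beta u$, i.e.\ $Lu=(\beta-V)u$. On the compact $E$ the continuous strictly positive $u$ satisfies $0<\min_E u\le u\le\max_E u<\infty$ and $Lu$ is bounded. Since $Y_t$ solves the martingale problem for $L$, computing the drift of
\begin{equation*}
 M_t=\exp\!\Big(-\beta t+\int_0^t V(Y_s)\,ds\Big)\,u(Y_t)
\end{equation*}
gives $\exp(\cdots)\,[-\beta u+Vu+Lu](Y_t)=0$, so $M_t$ is a local martingale; the uniform bound $|M_t|\le\norm{u}\,e^{(\norm{V}+|\beta|)t}$ on bounded time intervals upgrades it to a true martingale. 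Hence $\bE_y[M_t]=u(y)$, and sandwiching $u(Y_t)$ between $\min_E u$ and $\max_E u$ yields $\tfrac1t\log\bE_y[\exp(\int_0^t V(Y_s)\,ds)]\to\beta$ as $t\to\infty$, uniformly in $y\in E$. In particular $\beta=\beta(V)$ is uniquely determined by $V$ (it is the principal eigenvalue of $L+V$), which is precisely the datum Lemma~\ref{lemma:FengKurtz} requires to pin down the limiting Hamiltonian and conclude the LDP.

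It remains to match the resulting rate function with \eqref{eq:proving_LDP:DV_rate_function}. Lemma~\ref{lemma:FengKurtz} delivers the LDP with a rate function which, under~\ref{item:thm_LDP_compact:principal_eigenvalue}, can be written as $\mathcal{I}(\mu)=\sup_{V\in C(E)}\big(\int_E V\,d\mu-\beta(V)\big)$, and I would argue equality with the Donsker--Varadhan functional by two short dual bounds. For ``$\ge$'': given $u\in\mathcal{D}^+(L)$ put $V=-Lu/u\in C(E)$, so $(V+L)u=0$ and, by the uniqueness above, $\beta(V)=0$; thus $\int_E V\,d\mu-\beta(V)=-\int_E(Lu/u)\,d\mu$, and since on compact $E$ every $u\in\mathcal{D}^+(L)$ is bounded below by a positive constant, $\mathcal{I}(\mu)\ge-\inf_{u\in\mathcal{D}^+(L)}\int_E(Lu/u)\,d\mu$. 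For ``$\le$'': for $V\in C(E)$ with eigenpair $(\beta(V),u)$ one has $Lu/u=\beta(V)-V$, hence $\int_E V\,d\mu-\beta(V)=-\int_E(Lu/u)\,d\mu\le-\inf_{u\in\mathcal{D}^+(L)}\int_E(Lu/u)\,d\mu$, and taking the supremum over $V$ finishes the identification. Lower semicontinuity and compactness of sublevel sets come with the LDP from Lemma~\ref{lemma:FengKurtz} and are in any case clear here, $\calP(E)$ being compact and $\mathcal{I}$ a supremum of continuous affine functionals.

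The point I expect to require the most care is not this duality bookkeeping but the interface with Lemma~\ref{lemma:FengKurtz}: one must check that Assumptions~\ref{item:thm_LDP_compact:Feller}--\ref{item:thm_LDP_compact:principal_eigenvalue} genuinely instantiate its hypotheses --- in particular that the \emph{classical} (pointwise) solvability asked in~\ref{item:thm_LDP_compact:principal_eigenvalue} is strong enough to stand in for the viscosity-solution comparison principle of the general Feng--Kurtz scheme, and that the Feynman--Kac martingale argument above goes through without further integrability assumptions, using only that $Y_t$ solves the martingale problem, that $E$ is compact, and that eigenfunctions in $\mathcal{D}^+(L)$ are bounded away from $0$.
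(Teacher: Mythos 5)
Your proposal follows the paper's overall strategy (reduce everything to Lemma~\ref{lemma:FengKurtz}, with \ref{item:thm_LDP_compact:Feller}--\ref{item:thm_LDP_compact:tight} supplying the semigroup regularity and \ref{item:thm_LDP_compact:principal_eigenvalue} as the decisive input), but at the decisive point it does not discharge the hypotheses of that lemma as it is actually stated. Lemma~\ref{lemma:FengKurtz} neither takes as input a limiting log-moment-generating functional $V\mapsto\beta(V)$ nor outputs the rate function as the Legendre transform $\sup_V\bigl(\mu(V)-\beta(V)\bigr)$: its key hypothesis is the operator inequality \ref{cond:FK7}, $H_1^{\beta,\Psi}\le H_2^{\beta,\Psi}$, and its conclusion is already of Donsker--Varadhan form. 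Your Feynman--Kac martingale computation is correct, but it establishes a different statement than \ref{cond:FK7}; what is actually needed — and what the eigenpair gives in two lines, as in the paper — is $H^\beta_1(p)\le\sup_{y}\bigl[\tfrac{Lf_p(y)}{f_p(y)}+\beta(y)\cdot p\bigr]=\lambda_p=\inf_{y}\bigl[\tfrac{Lf_p(y)}{f_p(y)}+\beta(y)\cdot p\bigr]\le H^\beta_2(p)$, after noting that in the compact case one may take $\Psi\equiv0$, $\varphi_n\equiv1$, $\calQ=\{q\}$, $K^q_n=E$, so that \ref{cond:FK4}--\ref{cond:FK6} hold trivially (choices you never make, though they are needed to invoke the lemma, and the choice $\Psi\equiv 0$ is also what makes the rate collapse to \eqref{eq:proving_LDP:DV_rate_function}). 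Your closing duality paragraph is mathematically fine but repairs a problem that does not arise, while the actual obligation \ref{cond:FK7} is left unverified.

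If one instead reads your argument as a self-contained G\"artner--Ellis-type proof from the uniform convergence $\tfrac1t\log\bE_y\exp\bigl(\int_0^tV(Y_s)\,ds\bigr)\to\beta(V)$, it is incomplete: on the compact $\calP(E)$ this gives exponential tightness and the upper bound, but the matching lower bound does not follow from convergence along linear functionals alone without a further argument (steepness/uniqueness of maximizers, or a change-of-measure and ergodicity argument for the tilted process) — precisely the work the Feng--Kurtz machinery performs once \ref{cond:FK1}--\ref{cond:FK7} are in place. Finally, Lemma~\ref{lemma:FengKurtz} yields an LDP for the occupation measures $\eta^\tau_n$ in $\calL(E)$, not directly for $\eta_t$ in $\calP(E)$; the transfer requires the projection/contraction step of Proposition~\ref{prop:LDP_strategy} (continuity of $\pi_1$, convexity of $\calI$ and Jensen's inequality), which you assert but do not supply. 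All missing pieces are short, and you correctly locate \ref{item:thm_LDP_compact:principal_eigenvalue} as the crux, but as written the proof does not interface with the lemma it relies on.
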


Theorem \ref{thm:proving_LDP:LDP_compact} remains valid when replacing the eigenvalue-problem condition~\ref{item:thm_LDP_compact:principal_eigenvalue} by the mixing condition~\ref{item:thm_LDP_non_compact:mixing}. This is because the latter is a weaker condition sufficient for verifying the inequality \eqref{eq:ineqH1H2}, upon which the proof of the theorem hinges.

The next theorem gives the corresponding large deviations result for the non-compact setting; this is the result we use for proving the large deviations principle for the zig-zag process on $\bR \times \{\pm 1\}$ (Theorem \ref{thm:LDP-zigzag:non-compact-1d}).
\begin{theorem}\label{thm:proving_LDP:LDP_non_compact}
Let 
$S(t)$ be a Markov semigroup acting on $C_b(E)$ and $Y_t$ the corresponding Markov process. Let $L$ be the infinitesimal generator of $Y_t$ and assume that $Y_t$ solves the associated martingale problem. Assume \ref{item:thm_LDP_compact:Feller}, \ref{item:thm_LDP_compact:tight},
\ref{item:thm_LDP_non_compact:Lyapunov} and \ref{item:thm_LDP_non_compact:mixing}.
Then, if $Y_0 \in K$ for some compact set $K$, the empirical measures $\{\eta_t\}_{t > 0}$ associated to the Markov process $Y_t$ satisfy a large deviations principle in $\mathcal{P}(E)$, with rate function $\mathcal{I} : \mathcal{P}(E) \to [0,\infty]$ given by
\begin{equation*}
\mathcal{I}(\mu) = -\inf_{u \in \mathcal{D}^{++}(L)} \int_E \frac{Lu}{u} d\mu.
\end{equation*}
%
\end{theorem}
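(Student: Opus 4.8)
The plan is to deduce the statement from the abstract Feng--Kurtz criterion recorded in Lemma~\ref{lemma:FengKurtz}, by checking its hypotheses under Assumptions~\ref{item:thm_LDP_compact:Feller}, \ref{item:thm_LDP_compact:tight}, \ref{item:thm_LDP_non_compact:Lyapunov} and~\ref{item:thm_LDP_non_compact:mixing}. This amounts to three tasks: (i) an exponential compact-containment (exponential tightness) bound for $\{\eta_t\}_{t>0}$ in $\calP(E)$; (ii) the convergence of the logarithmically rescaled tilted semigroups $f\mapsto \tfrac1t\log \bE_\cdot\big[e^{\int_0^t V(Y_s)\,ds}\,f(Y_t)\big]$ together with the comparison principle for the associated Hamilton--Jacobi problem, which produces the rate function in variational (Legendre) form; and (iii) the rewriting of that variational form as the Donsker--Varadhan functional $-\inf_{u\in\calD^{++}(L)}\int_E (Lu/u)\,d\mu$.

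\textbf{Step 1 (exponential tightness).} Set $\psi_i:=-e^{-g_i}B(e^{g_i})$; by~\ref{item:thm_LDP_non_compact:Lyapunov3} each $\psi_i$ is bounded above on compacts, has compact sublevel sets, and $\psi_1(y)\to+\infty$ as $|y|\to\infty$. Writing $e^{-g_1}B(e^{g_1})\le C-\psi_1$ for a constant $C$, Dynkin's formula for the extended generator applied to $e^{g_1}$ (with a localization argument to accommodate its unboundedness) shows that $\exp\big(\int_0^t\psi_1(Y_s)\,ds - Ct\big)\,e^{g_1(Y_t)}$ is a supermartingale, so that, since $Y_0\in K$ with $K$ compact and $g_1\ge0$,
\begin{equation*}
\sup_{y\in K}\bE_y\Big[\exp\big(t\,\eta_t(\psi_1)\big)\Big]=\sup_{y\in K}\bE_y\Big[\exp\Big(\int_0^t\psi_1(Y_s)\,ds\Big)\Big]\le e^{Ct}\,\sup_{y\in K}e^{g_1(y)}.
\end{equation*}
Hence $\limsup_{t\to\infty}\tfrac1t\log\bP_y\big(\eta_t(\psi_1)>\ell\big)\le C-\ell$, and since $\{\mu\in\calP(E):\mu(\psi_1)\le\ell\}$ is a compact subset of $\calP(E)$ (its members put mass $\ge 1-\ell/m$ on the compact set $\{\psi_1\le m\}$), the family $\{\eta_t\}$ is exponentially tight. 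The auxiliary function $g_2$, together with~\ref{item:thm_LDP_non_compact:Lyapunov1}--\ref{item:thm_LDP_non_compact:Lyapunov2}, is used in the same vein to provide the reservoir of admissible test functions needed in Step~2 and to control the limiting Hamiltonian at infinity.

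\textbf{Step 2 (limiting semigroup and comparison principle).} Here we follow the Hamilton--Jacobi programme of~\cite[Chapter~12]{FengKurtz06}. Assumptions~\ref{item:thm_LDP_compact:Feller} and~\ref{item:thm_LDP_compact:tight} yield strong continuity of $S(t)$ and well-posedness of the martingale problem (cf.\ \cite[Remark~11.22]{FengKurtz06}), which is what is required to identify the generator of the tilted semigroups with $L+V$ and to pass to the limit $t\to\infty$. The weakened mixing property~\ref{item:thm_LDP_non_compact:mixing} is invoked precisely to establish the inequality~\eqref{eq:ineqH1H2} between the upper and lower Hamiltonians: it guarantees that, for compactly supported initial laws, the generalized principal eigenvalue of $L+V$ does not depend on the initial distribution, which is exactly the information that forces the upper and lower large deviation bounds to match and yields the comparison principle. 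Combined with the exponential tightness of Step~1, Lemma~\ref{lemma:FengKurtz} then delivers the LDP with rate function $\mathcal{I}(\mu)=\sup_{V\in C_b(E)}\{\mu(V)-\Lambda(V)\}$, where $\Lambda(V)=\lim_{t\to\infty}\tfrac1t\log\bE_y[e^{t\eta_t(V)}]$ is independent of $y\in E$.

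\textbf{Step 3 (Donsker--Varadhan form) and the main obstacle.} To conclude, one characterizes $\Lambda(V)$ as the generalized principal eigenvalue $\Lambda(V)=\inf\{\beta\in\bR:\ \exists\,u\in\calD^{++}(L)\text{ with }(L+V)u\le\beta u\}$ and carries out the standard Legendre-duality manipulation to obtain $\mathcal{I}(\mu)=-\inf_{u\in\calD^{++}(L)}\int_E (Lu/u)\,d\mu$; the restriction to $\calD^{++}(L)$ (rather than $\calD^+(L)$ as in Theorem~\ref{thm:proving_LDP:DV_I}) is forced by the non-compactness of $E$, since on an unbounded space one needs $u$ bounded away from zero to control $\log u$ and to justify the relevant approximation arguments. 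I expect the main difficulty to lie in Step~2: verifying~\eqref{eq:ineqH1H2} through the mixing condition while simultaneously controlling the behaviour at infinity via $g_1,g_2$. Because~\ref{item:thm_LDP_non_compact:mixing} only compares transition probabilities for \emph{compactly supported} initial data, one must localize with the Lyapunov functions, apply the mixing estimate on compacts, and then pass to the limit without destroying the variational identity; reconciling these two mechanisms is the crux of the argument. The exponential-tightness estimate of Step~1 is also somewhat delicate, since $e^{g_i}$ is unbounded and one must work with the extended generator and an appropriate localization, but this part is by now routine.
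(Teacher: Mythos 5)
You are in the right framework (the paper also deduces the theorem from Lemma~\ref{lemma:FengKurtz} and also uses the mixing condition~\ref{item:thm_LDP_non_compact:mixing} precisely to obtain the inequality~\eqref{eq:ineqH1H2}, via Proposition~\ref{prop:non_compact:H1_leq_H2}), but your proposal has a genuine gap: you never verify the hypotheses of the lemma you invoke. Lemma~\ref{lemma:FengKurtz} does not ask for exponential tightness of $\{\eta_t\}$ in $\calP(E)$; it asks for \ref{cond:FK4} (exponential compact containment of the process, uniformly over the nested sets $\tilde K^q_n$) and, crucially, \ref{cond:FK5}: the construction of functions $\varphi_n\in\calD^{++}$ and an upper semicontinuous $\Psi$ with compact superlevel sets satisfying $\tfrac1n\log\|\varphi_n\|\to0$, $\sup_{n,y}L\varphi_n/\varphi_n<\infty$ and the limsup comparison with $\Psi$. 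This is where \emph{both} Lyapunov functions of~\ref{item:thm_LDP_non_compact:Lyapunov} enter: the paper takes $\varphi_n$ as the cut-off~\eqref{eq:proving_LDP:varphi_n} of $e^{g_1}$ at the level $r_n$ defined through the interplay $g_1g_2\le n^2$ (so that $r_n/n\to0$ follows from \ref{item:thm_LDP_non_compact:Lyapunov2}), sets $\Psi=e^{-g_1}Be^{g_1}$, and controls $L\varphi_n/\varphi_n$ via the positive-maximum-principle inequality~\eqref{eq:proof-LDP-noncompact:ineq-pos-max-pr}. Your Step~1 supermartingale bound on $\exp(\int_0^t\psi_1(Y_s)\,ds)$ is a plausible estimate, but it neither produces the family $\tilde K^q_n$ with the uniform-in-starting-point bound of \ref{cond:FK4} (the paper gets this from $g_2$ via Lemma~4.20 of \cite{FengKurtz06}) nor the objects required by \ref{cond:FK5}, and it does not explain why two functions $g_1,g_2$ with $g_1/g_2\to0$ are needed at all.

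The second gap is the identification of the rate function. Lemma~\ref{lemma:FengKurtz} does not output a G\"artner--Ellis Legendre transform $\sup_V\{\mu(V)-\Lambda(V)\}$ of a principal eigenvalue; it gives an LDP for the occupation-measure paths in $\calL(E)$ with rate $\int_0^\infty I^{\Psi}(\rho_s)\,ds$, where $I^{\Psi}(\mu)=-\min\bigl[\inf_{u\in\calD^{++}(L)}\int_E\frac{Lu}{u}\,d\mu,\ \int_E\Psi\,d\mu\bigr]$. Two further steps are then needed, both absent from your Step~3: (i) passing from $\calL(E)$ to $\calP(E)$, which the paper does via the continuous projection and the contraction principle together with a Jensen/convexity argument (Proposition~\ref{prop:LDP_strategy}); and (ii) showing the $\int_E\Psi\,d\mu$ term in the minimum is redundant, which the paper proves by the Fatou argument with the same functions $\varphi_n$ (i.e.\ $\limsup_n\int_E L\varphi_n/\varphi_n\,d\mu\le\int_E\Psi\,d\mu$), yielding exactly $\mathcal I(\mu)=-\inf_{u\in\calD^{++}(L)}\int_E\frac{Lu}{u}\,d\mu$. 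Your ``standard Legendre-duality manipulation'' silently assumes what is in fact the delicate non-compact content of the proof; in this setting that identification is carried by Proposition~\ref{prop:non_compact:H1_leq_H2} (where $c_V(\nu)$ is shown to be constant over $\nu\in\calP_c(E)$ using~\ref{item:thm_LDP_non_compact:mixing} and Sion's theorem) combined with steps (i)--(ii), not by duality alone.
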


The proofs of Theorems \ref{thm:proving_LDP:LDP_compact} and \ref{thm:proving_LDP:LDP_non_compact} are given in Sections ~\ref{section:proof-of-LDP:compact} and \ref{section:proof-of-LDP:non-compact}, respectively.

%
\subsection{The empirical measures of the zig-zag process}
\label{sec:LDP_zigzag}
Having established the general large deviations results Theorems \ref{thm:proving_LDP:LDP_compact} and \ref{thm:proving_LDP:LDP_non_compact}, we now specialize to the zig-zag process. Throughout the section, $Y_t$ is used to denote the zig-zag process, $Y_t = (X_t, V_t)$ with $X_t$ and $V_t$ as in Section \ref{sec:zig-zag}. However the state space $E$ will change as we split the large deviations statements for the empirical measures of $Y$ into compact (torus) and non-compact ($\bR$) settings. Although \ref{thm:proving_LDP:LDP_non_compact} holds for arbitrary dimension $d \geq 1$, for the zig-zag process we limit ourselves to verifying the conditions for the case $d = 1$. Extending these results to $d > 1$ is substantially more difficult and remains a topic of further research. While conditions~\ref{item:thm_LDP_compact:Feller}, \ref{item:thm_LDP_compact:tight}, \ref{item:thm_LDP_non_compact:Lyapunov} hold true, the main challenge is verifying~\ref{item:thm_LDP_non_compact:mixing}.


We begin by considering the compact state space  $\mathbb{T} \times\{\pm 1\}$. In this case the infinitesemal generator $L$ of the semigroup $S(t)$ is has domain $\mathcal{D}(L) = C^1(\bT \times \{ \pm 1\}) = \{f\in C(\bT \times \{ \pm 1\}): f(\cdot,\pm 1) \in C^1(\mathbb{T})\}$,
and takes the form
\begin{equation}
\label{eq:gen_compact}
Lf (x,v) = v \partial_x f(x,v) + \lambda(x,v) \left[f(x,-v) - f(x,v)\right],
\end{equation}
with $\lambda$ given by \eqref{eq:switching-intensity-condition-2}. The LDP for the empirical measures associated with $Y$ and this state space is given in Theorem \ref{thm:LDP-zigzag:compact}. We prove this result in Section~\ref{section:zig-zag:LDP:compact} by verifying the conditions of Theorem \ref{thm:proving_LDP:LDP_compact}, the large deviations principle for processes taking values in a compact state space.


\begin{theorem}\label{thm:LDP-zigzag:compact}
Suppose that $U \in C^2(\mathbb{T})$. Then the family of empirical measures $\{\eta_t\}_{t>0}$ of the zig-zag process taking values in $\bT \times \{\pm 1\}$ satisfies a large deviations principle in the limit $t \to \infty$, with rate function $\mathcal{I}:\mathcal{P}(\bT \times \{\pm 1\}) \to [0,\infty]$ given by
\begin{equation*}
\mathcal{I}(\mu) = -\inf_{u \in \mathcal{D}^+(L)} \int_{\bT \times \{\pm 1\}} \frac{Lu}{u} d\mu.
\end{equation*}
%
%
\end{theorem}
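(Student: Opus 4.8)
The plan is to deduce Theorem~\ref{thm:LDP-zigzag:compact} from Theorem~\ref{thm:proving_LDP:LDP_compact} by verifying, for the zig-zag process $Y_t=(X_t,V_t)$ on the compact space $E=\mathbb{T}\times\{\pm1\}$ with generator $L$ as in~\eqref{eq:gen_compact} on $\mathcal D(L)=C^1(\mathbb{T}\times\{\pm1\})$ and switching rate $\lambda$ from~\eqref{eq:switching-intensity-condition-2}, the three hypotheses~\ref{item:thm_LDP_compact:Feller}, \ref{item:thm_LDP_compact:tight} and~\ref{item:thm_LDP_compact:principal_eigenvalue}; since $U\in C^2(\mathbb{T})$, the rate $\lambda$ is continuous on $E$. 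That $Y_t$ is the unique solution of the martingale problem for $(L,\mathcal D(L))$ and that $L$ is its generator is standard piecewise-deterministic Markov process theory (cf.\ Section~\ref{sec:zig-zag} and~\cite{Davis1993}), so once \ref{item:thm_LDP_compact:Feller}--\ref{item:thm_LDP_compact:principal_eigenvalue} hold, Theorem~\ref{thm:proving_LDP:LDP_compact} gives the asserted large deviations principle with rate function $\mathcal I$ as in~\eqref{eq:proving_LDP:DV_rate_function}.

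\emph{Conditions \ref{item:thm_LDP_compact:Feller} and \ref{item:thm_LDP_compact:tight}.} These are routine. For the Feller property, decompose $S(t)f$ according to the number $N_t$ of velocity switches before time $t$: each summand depends continuously on the initial state because the deterministic flow $(x,v)\mapsto(x+tv,v)$ and $\lambda$ are continuous, and the series converges uniformly since $N_t$ is stochastically dominated by a Poisson variable with parameter $\bar\lambda t$, where $\bar\lambda:=\sup_{y\in E}\lambda(y)<\infty$. For~\ref{item:thm_LDP_compact:tight}, compactness of $E$ yields compact containment for free, while the strong Markov property together with the same Poisson domination give, uniformly over the starting state $y$ and over stopping times $\tau$, $\mathbb P_y\big(d(Y_{\tau+h},Y_\tau)>h\mid\mathcal F_\tau\big)\leq 1-e^{-\bar\lambda h}\leq\bar\lambda h$ (between switches the position moves at unit speed and the velocity is constant); this is the Aldous oscillation estimate and hence yields tightness of $\{\mathbb P_y:y\in E\}$ in $\mathcal P(D_E[0,\infty))$.

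\emph{Condition~\ref{item:thm_LDP_compact:principal_eigenvalue}: the principal eigenvalue problem.} This is the heart of the matter. Fixing $V\in C(E)$ and writing $u_\pm=u(\cdot,\pm1)$, $\lambda_\pm=\lambda(\cdot,\pm1)$, $V_\pm=V(\cdot,\pm1)$, the equation $(V+L)u=\beta u$ becomes the first-order linear system on the circle
\begin{align*}
u_+' &= (\beta-V_++\lambda_+)\,u_+-\lambda_+u_-,\\
u_-' &= \lambda_-u_+-(\beta-V_-+\lambda_-)\,u_-,
\end{align*}
and we must produce $\beta\in\mathbb R$ admitting a strictly positive $C^1$ periodic solution. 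I would obtain $(\beta,u)$ as a principal eigenpair of $L+V$ by applying the Krein--Rutman theorem to the resolvent. Since $V$ is bounded, $L+V$ generates a $C_0$-semigroup on $C(E)$, so $R_\alpha:=(\alpha-L-V)^{-1}$ is a bounded operator for $\alpha$ large; it is moreover compact, because solving $(\alpha-L-V)u=g$ pointwise expresses $\partial_xu(\cdot,\pm1)$ as a continuous function of $u$, $\lambda$, $V$ and $g$, whence $u(\cdot,\pm1)\in C^1(\mathbb{T})$ with $\|\partial_xu\|_\infty$ bounded in terms of $\|u\|_\infty+\|g\|_\infty$, so that $R_\alpha$ maps bounded sets to bounded subsets of $\mathcal D(L)=C^1(\mathbb{T}\times\{\pm1\})$, which are precompact in $C(E)$ by Arzela--Ascoli. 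The Feynman--Kac representation $R_\alpha g(y)=\int_0^\infty e^{-\alpha t}\,\mathbb E_y\big[e^{\int_0^tV(Y_s)\,ds}g(Y_t)\big]\,dt$ shows $R_\alpha$ is strictly positive, hence irreducible, using that the zig-zag process on the torus enters any given open set with positive probability from any starting state (which follows from its ergodicity, cf.\ Section~\ref{sec:zig-zag} and~\cite{bierkens2017piecewise}). The Krein--Rutman theorem for compact positive irreducible operators then yields a simple eigenvalue $\rho=r(R_\alpha)>0$ with a strictly positive eigenfunction $u\in C(E)$; putting $\beta:=\alpha-1/\rho$ gives $(V+L)u=\beta u$ pointwise, and the regularity bootstrap above places $u$ in $\mathcal D(L)$. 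Since $u>0$ on the compact set $E$, in fact $u\in\mathcal D^+(L)$, which is exactly~\ref{item:thm_LDP_compact:principal_eigenvalue}.

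\emph{Main obstacle.} Conditions~\ref{item:thm_LDP_compact:Feller}--\ref{item:thm_LDP_compact:tight} are essentially bookkeeping; the real work is~\ref{item:thm_LDP_compact:principal_eigenvalue}, and within it the two substantive points are the compactness of $R_\alpha$ --- which relies crucially on the position variable being one-dimensional, so that the transport resolvent gains a spatial derivative and compactness on the circle follows (the analogous step is genuinely harder for $d>1$) --- and the strict positivity/irreducibility of $R_\alpha$, which is where the (mild) ergodicity of the zig-zag process enters; the degenerate case $U\equiv\mathrm{const}$ with $\gamma\equiv0$, for which the process is a deterministic rotation and is not ergodic, is implicitly excluded. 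With $(\beta,u)$ in hand the conclusion is immediate from Theorem~\ref{thm:proving_LDP:LDP_compact}.
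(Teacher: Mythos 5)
Your proposal follows essentially the same route as the paper: verify conditions \ref{item:thm_LDP_compact:Feller}--\ref{item:thm_LDP_compact:principal_eigenvalue} for the zig-zag generator on $\mathbb{T}\times\{\pm 1\}$ and apply Theorem~\ref{thm:proving_LDP:LDP_compact}, with the principal-eigenvalue condition \ref{item:thm_LDP_compact:principal_eigenvalue} settled exactly as in the paper, namely compactness of the resolvent via Arzel\`a--Ascoli (using the derivative bound extracted from the resolvent equation) plus strict positivity/irreducibility, followed by Krein--Rutman and the algebraic step $\beta=\alpha-1/\rho$. The only cosmetic differences are that you justify strict positivity through a Feynman--Kac representation of $(\alpha-L-V)^{-1}$ where the paper reduces to strong positivity of $(\gamma-L)^{-1}$ via a perturbation result from the semigroup literature, and that you give an explicit Aldous-type tightness argument and cite standard PDMP well-posedness where the paper instead checks closedness of $L$ and cites Feller continuity from the literature.
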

We now move to the setting of a non-compact state space. Specifically, we consider the zig-zag process $Y_t =(X_t,V_t) $ taking values in $\bR \times \{ \pm 1 \}$. As before, $L$ is the generator of this process, i.e.\ $L : \mathcal{D}(L) \subseteq C_b(\bR \times \{ \pm 1\}) \to C_b(\bR \times \{ \pm 1\})$ is a densely defined linear operator, on the set of functions $\{ f(\cdot, \pm 1) \in C_b ^1(\bR) \}$ we have the representation
\begin{align*}
	Lf(x,v) = v \partial _x f(x,v) + \lambda(x,v) \left[ f(x, -v) - f(x,v) \right], \ \ f \in \calD (L),
\end{align*}
with $\lambda(x,v)=\max(0,vU'(x))+\gamma(x)$. To prove the large deviations principle in this non-compact setting we need additional assumptions on the potential function $U$ determining the jump rates.
\begin{enumerate}[label =($B$.\arabic*)]
	\item\label{item:thm-LDP-zig-zag:Ui} $U(x) \to \infty$ as $|x| \to \infty$ and $U'(x) \to \pm \infty$ as $x \to \pm \infty,$
	\item\label{item:thm-LDP-zig-zag:Uii} $U'(x)/U(x) \to 0$ as $|x| \to \infty,$
	\item\label{item:thm-LDP-zig-zag:Uiii} $U''(x)/U'(x) \to 0$ as $|x| \to \infty.$
\end{enumerate}
Furthermore, we will assume that there exists a second potential $V \in C^2(\mathbb{R})$ such that:
\begin{enumerate}[label =($C$.\arabic*)]
	\item\label{item:thm-LDP-zig-zag:Upsi} $V(x) \to \infty$ as $|x| \to \infty$ and $V'(x) \to \pm \infty$ as $x \to \pm \infty$,
	\item\label{item:thm-LDP-zig-zag:Upsii} $V(x)/U(x) \to 0$, $U'(x)/V(x) \to 0 $ and $V'(x)/U'(x) \to 0$ as $|x| \to \infty$,
	\item\label{item:thm-LDP-zig-zag:Upsiii} $U''(x)/V'(x) \to 0$ as $|x| \to \infty$.
\end{enumerate}
In Section~\ref{section:zig-zag:LDP:non_compact}, we prove the following Theorem.
\begin{theorem}\label{thm:LDP-zigzag:non-compact-1d}
Assume that $U\in C^3(\mathbb{R})$ satisfies \ref{item:thm-LDP-zig-zag:Ui} - \ref{item:thm-LDP-zig-zag:Uiii}, that there is a function $V \in C^2(\mathbb{R})$ satisfying \ref{item:thm-LDP-zig-zag:Upsi} - \ref{item:thm-LDP-zig-zag:Upsiii} and the function $\gamma$ in \eqref{eq:switching-intensity-condition-2} is uniformly bounded by some $\bar \gamma$.
%
%
Suppose that the initial condition $Y_0$ belongs to a compact set $K \subseteq \bR \times \{ \pm 1\}$. Then the empirical measures $\{\eta_t\}_{t > 0}$ of $Y$ satisfies a large deviations principle on $\mathcal{P}(\bR \times \{ \pm 1\})$ with speed $t$ and rate function $\mathcal{I}:\mathcal{P}(\bR \times \{ \pm 1\}) \to [0,\infty]$ given by
\begin{equation*}
\mathcal{I}(\mu) = -\inf_{u \in \mathcal{D}^{++}(L)} \int_{\bR \times \{ \pm 1\}} \frac{Lu}{u} d\mu.
\end{equation*}
%
\end{theorem}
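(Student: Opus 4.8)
The plan is to deduce Theorem~\ref{thm:LDP-zigzag:non-compact-1d} from the abstract non-compact result Theorem~\ref{thm:proving_LDP:LDP_non_compact} by verifying Assumptions~\ref{item:thm_LDP_compact:Feller}, \ref{item:thm_LDP_compact:tight}, \ref{item:thm_LDP_non_compact:Lyapunov} and \ref{item:thm_LDP_non_compact:mixing} for the one-dimensional zig-zag process on $E = \bR \times \{\pm 1\}$ with switching rate $\lambda(x,v) = \max(0, vU'(x)) + \gamma(x)$. First I would record that $Y_t$ genuinely solves the martingale problem for the generator $L$ written in the statement: this is classical for PDMPs (via \cite{Davis1993, davis1984piecewise}) and the only mild point is that $\lambda$ is continuous, which follows from $U \in C^3$ together with $\gamma$ continuous and bounded. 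The Feller property \ref{item:thm_LDP_compact:Feller} and the tightness property \ref{item:thm_LDP_compact:tight} are essentially continuous dependence of the flow and the switching times on the initial condition: since between jumps the motion is the explicit translation $(x,v) \mapsto (x + tv, v)$ and the jump intensity $\int_0^t \lambda(x+vs,v)\,ds$ depends continuously on $(x,v)$, a coupling/Gronwall argument shows $\mathbb{P}_{y_n} \to \mathbb{P}_y$ in $\mathcal{P}(D_E[0,\infty))$ whenever $y_n \to y$, and tightness over a compact set of starting points follows from the uniform control of the modulus of continuity of paths (velocities are bounded, so positions are uniformly Lipschitz).

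Next I would verify the Lyapunov condition~\ref{item:thm_LDP_non_compact:Lyapunov}, which is where the hypotheses \ref{item:thm-LDP-zig-zag:Ui}--\ref{item:thm-LDP-zig-zag:Uiii} and \ref{item:thm-LDP-zig-zag:Upsi}--\ref{item:thm-LDP-zig-zag:Upsiii} on the two potentials $U$ and $V$ come in. The natural choice is $g_1(x,v) = U(x)$ and $g_2(x,v) = V(x)$ (the velocity variable lives in the compact set $\{\pm 1\}$, so sublevel sets of $g_i$ in $x$ give compact sets in $E$, giving (a) and, via \ref{item:thm-LDP-zig-zag:Upsii}, property (b)). For property (c) one computes, using the extended generator $B f(x,v) = v\partial_x f(x,v) + \lambda(x,v)[f(x,-v)-f(x,v)]$ applied to $f = e^{g_i} = e^{U(x)}$ (which is independent of $v$, so the jump term vanishes!),
\begin{align*}
  e^{-U(x)} B\bigl(e^{U}\bigr)(x,v) = v U'(x).
\end{align*}
This is $-|U'(x)|$ when $v = -\operatorname{sgn} U'(x)$ and $+|U'(x)|$ otherwise, so it does \emph{not} tend to $-\infty$ uniformly in $v$; hence a pure $e^{U}$ will not work and one must instead use a velocity-dependent correction, e.g. $g_1(x,v) = U(x) - c\, v\, \arctan(U'(x))$ or more simply an ansatz $f(x,v) = e^{U(x)}\bigl(1 - \epsilon v\, \phi(x)\bigr)$ with $\phi$ increasing and bounded, designed so that the jump term $\lambda(x,v)[f(x,-v)-f(x,v)]$ contributes a term $\asymp -2\lambda(x,v)\epsilon v\phi(x) e^{U(x)}$ which, together with $v\partial_x$ of the correction, dominates the bad $+|U'(x)|$ coming from $v U'(x) e^{U(x)}$ on the half where $vU'(x) > 0$ (there $\lambda(x,v) \geq |U'(x)|$, so the jump term has the right size and sign). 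Choosing $\epsilon$ small and using \ref{item:thm-LDP-zig-zag:Uii}, \ref{item:thm-LDP-zig-zag:Uiii} to absorb lower-order terms, one gets $e^{-g_1}B(e^{g_1}) \to -\infty$; the analogous computation with $V$ in place of $U$ and the comparison hypotheses \ref{item:thm-LDP-zig-zag:Upsii}, \ref{item:thm-LDP-zig-zag:Upsiii} gives the superlevel-set compactness for $g_2$. I expect this construction of the correct Lyapunov pair, and in particular getting a $v$-dependent function whose $B$-image is uniformly (in $v$) coercive, to be the most delicate bookkeeping step.

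Finally, the mixing condition~\ref{item:thm_LDP_non_compact:mixing} restricted to compactly supported $\nu_1, \nu_2 \in \mathcal{P}_c(E)$: given supports contained in $[-R,R]\times\{\pm 1\}$, one shows that for a suitable $T$ (large enough that, from any point of $[-R,R]$ with either velocity, the process can reach and "fill" a neighbourhood of any other point of $[-R,R]$ with positive, locally bounded-below density), the time-averaged transition kernel $\int_0^T P(t,y,\cdot)\,\rho(dt)$ from any starting point in the common support is comparable, up to a constant $M$ depending only on $R$ and $T$, to that from any other. Concretely, one picks $\rho_1 = \rho_2 = \unif_{[0,T]}$ and uses that with probability bounded below the process switches a bounded number of times in $[0,T]$ and that the event-time densities are bounded above and below on the compact region $\{|x|\le R + T\}$ (here boundedness of $\gamma$ by $\bar\gamma$ and continuity of $U'$ are used to sandwich $\lambda$), so that $\int_0^T P(t,y,A)\,dt$ is, for $A \subseteq [-R,R]\times\{\pm1\}$, bounded above and below by constants times $\operatorname{Leb}\times\unif_{\pm1}$ of $A$. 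The comparison inequality then follows with $M$ the ratio of the upper constant to the lower. The crucial point, already flagged in the text after \eqref{eq:proving_LDP:LDP_non_compact:mixing}, is that finite speed of propagation forces us to keep the initial laws compactly supported, and indeed $\mathcal{P}_c(E)$ is exactly the class for which this sandwiching is uniform. With \ref{item:thm_LDP_compact:Feller}, \ref{item:thm_LDP_compact:tight}, \ref{item:thm_LDP_non_compact:Lyapunov}, \ref{item:thm_LDP_non_compact:mixing} verified and $Y_0$ in a compact set by hypothesis, Theorem~\ref{thm:proving_LDP:LDP_non_compact} applies and yields the stated LDP with rate function $\mathcal{I}(\mu) = -\inf_{u \in \mathcal{D}^{++}(L)} \int_E (Lu/u)\,d\mu$.
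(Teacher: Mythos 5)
Your overall strategy is the same as the paper's (verify \ref{item:thm_LDP_compact:Feller}, \ref{item:thm_LDP_compact:tight}, \ref{item:thm_LDP_non_compact:Lyapunov}, \ref{item:thm_LDP_non_compact:mixing} and invoke Theorem~\ref{thm:proving_LDP:LDP_non_compact}), and the Feller/tightness part is fine, but the verification of the Lyapunov condition~\ref{item:thm_LDP_non_compact:Lyapunov} as you sketch it does not work, and this is the heart of the proof. First, your assignment of roles is reversed: condition~\ref{item:thm_LDP_non_compact:Lyapunov2} demands $g_1/g_2\to 0$, i.e.\ $g_2$ must be the \emph{faster}-growing function, while \ref{item:thm-LDP-zig-zag:Upsii} gives $V/U\to 0$; so $g_2$ must be built on $U$ and $g_1$ on $V$ (the paper takes $g_1=\alpha_1 V+\beta vU'$, $g_2=\alpha_2 U+\beta vU'$), whereas you take $g_1$ based on $U$ and $g_2$ based on $V$, which violates (b) outright. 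Second, your proposed corrections are too weak to make $e^{-g}B e^{g}\to-\infty$. For $f(x,v)=e^{U(x)}(1-\epsilon v\phi(x))$ the jump term is $\lambda(x,v)\bigl[f(x,-v)-f(x,v)\bigr]=+2\lambda\epsilon v\phi\, e^{U}$, the opposite sign of what you claim; and even after fixing the sign, any bounded correction (multiplicative, or $c\,v\arctan(U')$ in the exponent) leaves
\begin{equation*}
e^{-g}Be^{g}(x,+1)\;\approx\;U'(x)\Bigl(1-\bigl(1-e^{-2c\arctan(U'(x))}\bigr)\Bigr)\;=\;U'(x)\,e^{-2c\arctan(U'(x))}\;\longrightarrow\;+\infty
\end{equation*}
as $x\to+\infty$, because the factor $e^{\,g(x,-v)-g(x,v)}-1$ is bounded below by a constant $>-1$ when the $v$-dependent part of $g$ is bounded, so the jump term can never cancel the transport term $vU'$ coming from a full coefficient $1$ on $U$. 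The mechanism that actually works is a coefficient $\alpha<1$ on the potential together with an \emph{unbounded} antisymmetric correction $\beta vU'$, so that on $\{vU'>0\}$ the jump factor $e^{-2\beta vU'}-1\to-1$ and the total behaves like $(\alpha-1)|U'|\to-\infty$; moreover, when $\gamma\not\equiv 0$ (which your statement allows, $\gamma\le\bar\gamma$) the linear correction must be replaced by a logarithmic one of the type $\tfrac{\beta}{2}\operatorname{sign}(s)\log(\bar\gamma+|s|)$, since otherwise the refreshment part contributes $\gamma\bigl[e^{2\beta|U'|}-1\bigr]$ on $\{vU'<0\}$, which blows up. None of this is in your sketch, and it is precisely where hypotheses \ref{item:thm-LDP-zig-zag:Uii}--\ref{item:thm-LDP-zig-zag:Uiii} and \ref{item:thm-LDP-zig-zag:Upsii}--\ref{item:thm-LDP-zig-zag:Upsiii} are consumed.

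A secondary gap is in the mixing condition~\ref{item:thm_LDP_non_compact:mixing}: the upper sandwich (time-averaged kernel dominated by a constant times $\Leb\otimes\unif_{\pm1}$ on a compact set) is straightforward, but the lower bound is not obtained by merely sandwiching $\lambda$ between constants, because $\lambda(x,v)=\max(0,vU'(x))+\gamma(x)$ may vanish on half the phase space (e.g.\ $\gamma\equiv0$ and $vU'<0$) and the kernel has atomic parts. One needs a genuine local minorization/reachability result for the zig-zag process — the paper uses Theorem~4 and Lemma~8 of \cite{bierkens2019ergodicity} (this is where $U\in C^3$ enters) together with a compactness-and-covering argument over $K\times K(t_1)$ — to get a uniform lower bound by $\mu(A\cap K(t_1))$. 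Your outline states the desired conclusion but does not supply this ingredient.
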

Some comments on the additional assumptions \ref{item:thm-LDP-zig-zag:Ui} - \ref{item:thm-LDP-zig-zag:Uiii} and \ref{item:thm-LDP-zig-zag:Upsi} - \ref{item:thm-LDP-zig-zag:Upsiii} are in place. The condition $U \in C^3(\mathbb{R})$ is imposed to allow for an application of Theorem 4 of \cite{bierkens2019ergodicity}, which is used to verify that \ref{item:thm_LDP_non_compact:mixing} holds. 
The auxiliary potential $V$ is used to find a second Lyapunov function for $L$ that grows slower than $U$ at infinity; roughly speaking, $V$ behaves asymptotically in-between the potential $U$ and its derivative $U'$ as $|x|$ grows. As an example, in the Gaussian case, $U(x) = x^2/2$ satisfies Conditions \ref{item:thm-LDP-zig-zag:Ui} - \ref{item:thm-LDP-zig-zag:Uiii}, and for any $0<\kappa<1$, the potential $V(x) = |x|^{1+\kappa}/(1+\kappa)$ satisfies \ref{item:thm-LDP-zig-zag:Upsi} - \ref{item:thm-LDP-zig-zag:Upsiii}. In general, any potential $U$ growing at infinity as $(1+|x|^2)^{\beta/2}$ with $\beta > 1$ satisfies the conditions, with $0<\kappa < 1$ such that $\beta - \kappa > 1$ and auxiliary potential $V(x) \sim (1+|x|^2)^{(\beta - \kappa)/2}$.

\subsection{Explicit expression for the rate function}
\label{sec:rate}
In Theorems \ref{thm:LDP-zigzag:compact} and \ref{thm:LDP-zigzag:non-compact-1d} we establish the LDP for the empirical measures of the zig-zag process taking values in $\bT \times \{ \pm 1\}$ and $\bR \times \{ \pm 1 \}$, respectively. In those results the rate function is given on the variational form of the results by Donsker and Varadhan, see Section \ref{sec:intro_LDP}. This form follows from the more general large deviations results in Section \ref{sec:gen_LDP} and are not specific to the zig-zag process. Here, we use the properties of the latter to derive a more explicit form of the rate function for the case $E = \mathbb{T} \times \{\pm 1\}$, taking a first step towards using it as a tool for analysing the corresponding simulation algorithms. 


We assume throughout that $E = \mathbb{T} \times \{\pm 1\}$ and the switching rate $\lambda(x,v)$ satisfies $\lambda(x,v) > 0$ for all $(x,v) \in E$. This does not include the canonical rates $\lambda(x,v) = \max(0, v U'(x))_+$; however at the end of this section we present a formal expression for this case. 

Define a reference measure $\nu_0$ on $E$ by $\nu_0(dx, dv) = \Leb(dx) \otimes \unif_{\pm 1}(dv)$.
For any function $f : E \rightarrow \R$ we write $f^+(x) := f(x,+1)$ and $f^-(x) := f(x,-1)$.
Recall the $\arcsinh$ function,
\[ \arcsinh(\xi) = \log \left( \xi + \sqrt{\xi^2 +1} \right), \quad \xi \in \R.\]
The proofs of the following results are given in Section~\ref{sec:proofs-rate-function}.

\begin{proposition}
\label{prop:expression-ratefunction}
Suppose $\mu(dx, dv) = \rho(x,v) \nu_0(dx, dv)$ for a continuously differentiable function $\rho : E \rightarrow [0,\infty)$.
If $\frac{d \rho^+}{dx}(x)= \frac{d \rho^-}{dx}(x)$ and $\rho^+$, $\rho^-$ are strictly positive for all $x \in \mathbb{T}$ then the Donsker-Varadhan functional is given by
\begin{align}
\label{eq:rate-function}
\nonumber  \mathcal I(\mu)& = \int_{\mathbb{T}} \bigg\{\tfrac 1 2 \rho' \log \left( \frac{\lambda^+ \rho^+}{\lambda^- \rho^-} \right)+ \rho'  \arcsinh \left( \frac{ \rho'}{2 \sqrt{ \lambda^+ \lambda^- \rho^+ \rho^-}}\right) \\
 & \quad \quad \quad- \sqrt{ 4 \lambda^+ \lambda^- \rho^+ \rho^- + (\rho')^2} + \lambda^+ \rho^+ + \lambda^- \rho^- \bigg\} \ d x.
 \end{align}
If $\rho^+ \geq 0$ and $\rho^- \geq 0$ are constant, then
\begin{equation}
\label{eq:rate-function-constant-density}
\nonumber  \mathcal I(\mu) = \int_{\mathbb{T}} \left( \sqrt{\lambda^+ \rho^+} - \sqrt{\lambda^- \rho^-} \right)^2 \ d x.
 \end{equation}
If $\frac{d \rho^+(x)}{dx}(x) \neq \frac{d \rho^-(x)}{dx}$ for some $x \in \mathbb{T}$ then $\mathcal I(\mu) = \infty$.
\end{proposition}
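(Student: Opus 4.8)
The starting point is the variational formula
\[
\mathcal I(\mu) = -\inf_{u \in \mathcal D^+(L)} \int_E \frac{Lu}{u}\, d\mu,
\]
where $Lu(x,v) = v\partial_x u(x,v) + \lambda(x,v)[u(x,-v) - u(x,v)]$. The plan is to perform the substitution $u = e^{\varphi}$, write the minimisation over $\varphi^+(x)=\log u(x,+1)$ and $\varphi^-(x)=\log u(x,-1)$, and then recognise that, after integration by parts, the functional decouples into a pointwise optimisation problem in the finitely many quantities $\varphi^+(x)$, $\varphi^-(x)$, $(\varphi^+)'(x)$, $(\varphi^-)'(x)$ at each $x$. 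Concretely, writing $h(x) = \varphi^+(x)-\varphi^-(x)$, the integrand $\frac{Lu}{u}$ becomes $v\varphi'(x,v) + \lambda(x,v)(e^{\pm h(x)}-1)$; multiplying by $\rho(x,v)$ and summing over $v=\pm1$, the terms $\rho^+(\varphi^+)' - \rho^-(\varphi^-)'$ appear. Using the hypothesis $(\rho^+)' = (\rho^-)' =: \tfrac12\rho'$ (in the notation of the display, $\rho' = (\rho^+)' + (\rho^-)'$, so each equals $\tfrac12\rho'$) one integrates by parts on the torus (no boundary terms) to convert $\int (\rho^+ (\varphi^+)' - \rho^- (\varphi^-)')\,dx$ into $-\tfrac12\int \rho' (\varphi^+ - \varphi^-)\,dx = -\tfrac12\int \rho' h\,dx$. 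This is the crucial step that makes the derivative of $\varphi$ disappear, leaving a functional that depends on $\varphi$ only through $h(x)$.

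After this reduction, $-\int \frac{Lu}{u}\,d\mu$ equals, up to the sign,
\[
\int_{\mathbb T}\Big\{ \tfrac12 \rho'(x) h(x) - \lambda^+(x)\rho^+(x)\big(e^{h(x)}-1\big) - \lambda^-(x)\rho^-(x)\big(e^{-h(x)}-1\big)\Big\}\,dx,
\]
and one maximises over $h$. Since the integrand is, at each $x$, a strictly concave function of the scalar $h(x)$, the supremum is attained pointwise: differentiating in $h$ gives $\tfrac12\rho' = \lambda^+\rho^+ e^h - \lambda^-\rho^- e^{-h}$, a quadratic in $e^h$ with positive root
\[
e^{h(x)} = \frac{\tfrac12\rho'(x) + \sqrt{\tfrac14(\rho'(x))^2 + \lambda^+\lambda^-\rho^+\rho^-}}{2\lambda^+(x)\rho^+(x)}.
\]
Recognising $h(x) = \tfrac12\log\!\big(\tfrac{\lambda^-\rho^-}{\lambda^+\rho^+}\big) + \arcsinh\!\big(\tfrac{\rho'}{2\sqrt{\lambda^+\lambda^-\rho^+\rho^-}}\big)$ and substituting back, and simplifying $\lambda^+\rho^+e^h + \lambda^-\rho^-e^{-h} = \sqrt{(\rho')^2 + 4\lambda^+\lambda^-\rho^+\rho^-}$, yields exactly \eqref{eq:rate-function}. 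The constant-density case then follows by setting $\rho'\equiv 0$, where $\arcsinh(0)=0$ and the bracket collapses to $-2\sqrt{\lambda^+\lambda^-\rho^+\rho^-} + \lambda^+\rho^+ + \lambda^-\rho^- = (\sqrt{\lambda^+\rho^+} - \sqrt{\lambda^-\rho^-})^2$.

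For the last claim, that $\mathcal I(\mu) = \infty$ when $(\rho^+)'(x_0) \neq (\rho^-)'(x_0)$ for some $x_0$, the idea is to exhibit a sequence $u_n \in \mathcal D^+(L)$ along which $\int \frac{Lu_n}{u_n}\,d\mu \to -\infty$. One constructs $u_n$ of the form $e^{\varphi_n}$ where $\varphi_n^+ - \varphi_n^-$ is an approximation of a large multiple of a bump concentrated near $x_0$ on which the ``wrong-sign'' boundary term $\int(\rho^+ (\varphi^+)' - \rho^- (\varphi^-)')\,dx$ fails to be cancelled after integration by parts: the leftover is $\int (\rho^+ - \rho^-)(\varphi^+)'$-type term which, near $x_0$, can be made arbitrarily negative while the bounded zeroth-order switching contribution stays controlled. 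One must verify $u_n$ stays strictly positive and in $C^1$ — routine by mollification.

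The main obstacle I anticipate is the last part. Handling the integration by parts cleanly when $(\rho^+)' \neq (\rho^-)'$ requires carefully splitting $\int \rho^v (\varphi^v)'\,dx$ and tracking which portion is a genuine cancellation versus a residual linear-in-$\varphi'$ term; making the divergence rigorous means constructing the test functions explicitly (piecewise linear $\varphi^\pm$ smoothed at the corners) and showing the switching term $\sum_v \lambda^v\rho^v(e^{\pm h}-1)$ does not grow fast enough to compensate. The finite-density computation, by contrast, is a concave pointwise optimisation and should be essentially mechanical once the integration by parts is in place.
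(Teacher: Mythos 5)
Your treatment of the first two claims follows essentially the same route as the paper (substitute $u=e^{\varphi}$, integrate by parts on $\mathbb{T}$, note that under $(\rho^+)'=(\rho^-)'$ the functional depends on $\varphi$ only through $h=\varphi^+-\varphi^-$, then solve the pointwise strictly concave problem via $\arcsinh$ and substitute back; the constant case is the specialisation $\rho'\equiv 0$). But the bookkeeping is currently inconsistent by factors of $2$ and a sign: in the paper's formula $\rho'$ denotes the \emph{common} derivative $\tfrac{d\rho^+}{dx}=\tfrac{d\rho^-}{dx}$, not $(\rho^+)'+(\rho^-)'$ as you assert; with your definition $h=\varphi^+-\varphi^-$ the exponentials pair as $\lambda^+\rho^+(e^{-h}-1)$ and $\lambda^-\rho^-(e^{h}-1)$, i.e.\ the opposite of what you wrote; and the discriminant of your quadratic should be $\tfrac14(\rho')^2+4\lambda^+\lambda^-\rho^+\rho^-$. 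The closed form you then ``recognise'' matches the stated formula only under the paper's convention for $\rho'$, so the computation as written does not cohere, although these are repairable slips rather than a wrong method.

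The genuine gap is the third claim, precisely the step you flagged as the obstacle, and your proposed construction would fail. After integration by parts the functional reads, in terms of $\psi=\tfrac12(\varphi^++\varphi^-)$ and $h=\varphi^+-\varphi^-$, as a term linear in $\psi$ with coefficient $-\big((\rho^+)'-(\rho^-)'\big)$, a term linear in $h$ with bounded coefficient, plus the switching terms, which depend on $h$ alone and exponentially so. Making $h$ a large multiple of a bump near $x_0$ gains at most (height)$\times$(width) linearly from the $h$-linear term while paying a cost of order (width)$\times e^{\text{height}}$ from $\int\lambda^-\rho^-(e^{h}-1)\,dx$ (or its mirror); no coupling of height and width sends the sum to $-\infty$ --- indeed the optimisation over $h$ alone is exactly the finite pointwise problem you already solved. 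The unbounded direction is the one your reduction discards: the common part $\psi$, which never enters the switching terms. This is the paper's argument (its Lemma preceding the reparametrisation step): take $u^+=u^-=\exp\!\big(t\{(\rho^+)'-(\rho^-)'\}\big)$ (with the sign chosen so the linear term is negative, and mollified if $\rho\notin C^2$ so that $u\in\mathcal D^+(L)$); then the jump terms vanish identically and $H(u_t)=-t\int_{\mathbb{T}}\big((\rho^+)'-(\rho^-)'\big)^2\,dx\to-\infty$, the integral being positive by continuity once the derivatives differ at one point, whence $\mathcal I(\mu)=\infty$. You should replace your bump-in-$h$ construction by this choice; as sketched, your argument for the divergence does not go through.
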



Note that if $\mu(dx,dv) =  \rho(x,v) \nu_0(dx, dv)$ and $\frac{d \rho^+}{dx}= \frac{d \rho^-}{dx}$ on $\mathbb{T}$, then for some constant $c \in \R$ and a probability density function $\rho$ on $\mathbb{T}$ we have $\rho^+(x,v) = \rho(x) + c$ and $\rho^-(x,v) = \rho(x) - c$. 

A useful application of the rate function $\mathcal I(\mu)$ is in estimating deviations of ergodic averages, which typically requires the computation of $\inf_{\mu \in \mathcal P} \left(\mathcal I(\mu) - \int_E V \ d \mu \right)$. If $V$ does not depend on $v$, then by the following result we can safely assume $c = 0$ and thus restrict the minimization problem to minimization over probability densities on $\mathbb{T}$.

\begin{proposition}
\label{prop:no-v-dependence}
Let $\rho \in C^1(\mathbb{T})$ be a strictly positive probability density function on $\mathbb{T}$. Let $|k| := \inf_{x} \rho(x)$. Consider the one-parameter family of probability measures $(\mu_c)_{c \in (-k,+k)} \in \mathcal P_{eq}$ with probability density functions $\rho_c : E \rightarrow (0, \infty)$ given by
\[ \rho_c(x,+1) = \rho(x) + c, \quad \rho_c(x,-1) = \rho(x) - c, \quad c \in (-k, +k).\]
Then $c \mapsto \mathcal I(\mu_c)$ is minimized at $c = 0$. Furthermore, for $\mu = \mu_0$,  
\begin{equation}
\label{eq:ratefunction-simplified}
\begin{aligned}
  \mathcal I(\mu) 
 &  = \int_{\mathbb{T}} \bigg\{ \tfrac 1 2 \rho' \log \left( \frac{ \lambda^+}{\lambda^-} \right) + \rho' \arcsinh \left( \frac{\rho'}{2 \rho \sqrt{ \lambda^+ \lambda^-}} \right) \\
 & \quad \quad \quad  - \sqrt{ 4 \lambda^+ \lambda^- \rho^2 + (\rho')^2} + (\lambda^+ + \lambda^-) \rho \bigg\} \ dx.
 \end{aligned}
\end{equation}
\end{proposition}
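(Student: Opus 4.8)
The plan is to read $\mathcal I(\mu_c)$ off from Proposition~\ref{prop:expression-ratefunction} and then reduce the minimisation in $c$ to a short cancellation argument built on convexity of the Donsker--Varadhan functional. First I would check that each $\mu_c$ meets the hypotheses of Proposition~\ref{prop:expression-ratefunction}: the density $\rho_c$ is $C^1$ and strictly positive on $E$ because $|c| < k = \inf_x \rho(x)$; it integrates to $1$ against $\nu_0$ since $\tfrac12[(\rho+c)+(\rho-c)] = \rho$; and $\tfrac{d}{dx}\rho_c^+ = \rho' = \tfrac{d}{dx}\rho_c^-$. Hence the first case of~\eqref{eq:rate-function} applies with $\rho^\pm$ there replaced by $\rho \pm c$ and common derivative $\rho'$, yielding an explicit expression for $\mathcal I(\mu_c)$. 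Setting $c = 0$ collapses the logarithm to $\log(\lambda^+/\lambda^-)$ and leaves $\rho^\pm = \rho$ everywhere else, which is exactly~\eqref{eq:ratefunction-simplified}; this settles the ``furthermore'' clause.

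Next I would record that $\mathcal I$ is convex on $\mathcal P(E)$: by~\eqref{eq:proving_LDP:DV_rate_function} it is the supremum over $u \in \mathcal D^+(L)$ of the maps $\mu \mapsto \int_E (-Lu/u)\,d\mu$, each of which is affine in $\mu$ (the integrand $-Lu/u$ is a fixed continuous, hence bounded, function on the compact space $E$). Since $c \mapsto \mu_c$ is affine, $h(c) := \mathcal I(\mu_c)$ is convex on $(-k,+k)$, so it suffices to show that $c = 0$ is a stationary point of $h$.

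For that I would differentiate~\eqref{eq:rate-function} in $c$ under the integral sign; this is legitimate since on any compact subinterval of $(-k,+k)$ the factors $\rho \pm c$ stay bounded away from $0$ while all other quantities are continuous on the torus. At $c = 0$ the $\arcsinh$-term and the square-root term contribute nothing because $(\rho+c)(\rho-c)$ has vanishing $c$-derivative there, so one is left with $h'(0) = \int_{\mathbb{T}}\big(\rho'/\rho + \lambda^+ - \lambda^-\big)\,dx$, which is $0$ since $\int_{\mathbb{T}}(\log\rho)'\,dx = 0$ and $\int_{\mathbb{T}}(\lambda^+ - \lambda^-)\,dx = \int_{\mathbb{T}}U'(x)\,dx = 0$ by~\eqref{eq:switching-intensity-condition-1} and periodicity on $\mathbb{T}$. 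Equivalently one can verify the symmetry $h(c) = h(-c)$: in~\eqref{eq:rate-function} only the logarithm and the term $\lambda^+\rho^+ + \lambda^-\rho^-$ fail to be invariant under $\rho^\pm \mapsto \rho^\mp$, and their net contribution to $h(c) - h(-c)$ is $\int_{\mathbb{T}}\rho'\log\frac{\rho+c}{\rho-c}\,dx + 2c\int_{\mathbb{T}}(\lambda^+ - \lambda^-)\,dx$, both terms vanishing by periodicity (the first after one integration by parts, the second as above); then convexity together with $h(c) = h(-c)$ gives $h(0) \le \tfrac12 h(c) + \tfrac12 h(-c) = h(c)$. Either way, $h$ is minimised at $c = 0$.

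The only genuinely structural point --- and the step I would be most careful about --- is the stationarity computation. Because the velocity flip $v \mapsto -v$ reverses the transport term $v\partial_x$ in $L$, the rate function is \emph{not} invariant under this flip, so neither the vanishing of $h'(0)$ nor the symmetry $h(c) = h(-c)$ is a soft consequence of any reversibility; both rely on the precise algebraic form of~\eqref{eq:rate-function} combined with the periodicity of $\log\rho$ and of $U$ on $\mathbb{T}$. Justifying the interchange of differentiation and integration, and the convexity remark, are routine.
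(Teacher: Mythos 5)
Your proposal is correct, and the ``furthermore'' part (substituting $c=0$ into~\eqref{eq:rate-function} after checking the hypotheses of Proposition~\ref{prop:expression-ratefunction}) matches the paper. Where you genuinely diverge is in how the minimization over $c$ is handled. The paper argues termwise on the explicit formula: the logarithmic term is \emph{constant in $c$} because its $c$-derivative is $\int_{\mathbb{T}}\rho'\bigl(\tfrac{1}{\rho+c}+\tfrac{1}{\rho-c}\bigr)dx=\int_{\mathbb{T}}\tfrac{d}{dx}\bigl(\log(\rho+c)+\log(\rho-c)\bigr)dx=0$ for every $c$; the $\arcsinh$ and square-root terms are pointwise decreasing in $\rho^+\rho^-=\rho^2-c^2$ and hence pointwise minimized at $c=0$; and the last term is constant in $c$ after integration since $\int_{\mathbb{T}}(\lambda^+-\lambda^-)dx=\int_{\mathbb{T}}U'dx=0$. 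You instead invoke convexity of $\mathcal I$ (supremum of the affine maps $\mu\mapsto\int(-Lu/u)\,d\mu$ from the Donsker--Varadhan form, a fact the paper itself uses in the proof of Proposition~\ref{prop:LDP_strategy}) together with either stationarity $h'(0)=\int_{\mathbb{T}}(\rho'/\rho+\lambda^+-\lambda^-)dx=0$ or the symmetry $h(c)=h(-c)$; both of your computations check out (the extra integral $\int_{\mathbb{T}}\rho'\log\tfrac{\rho+c}{\rho-c}\,dx$ indeed vanishes, being the integral over $\mathbb{T}$ of an exact derivative $\tfrac{d}{dx}G(\rho(x))$ with $G'(r)=\log\tfrac{r+c}{r-c}$). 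The trade-off: your soft convexity argument avoids any monotonicity analysis of the $\arcsinh$ and square-root terms and would apply to other affine one-parameter families, while the paper's termwise argument is self-contained within the explicit formula and yields slightly more, namely that $h$ is even and non-decreasing in $|c|$, with the non-constant part minimized pointwise in $x$ at $c=0$.
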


We will specialize to the case in which $\rho = \rho^+ = \rho^-$ and use the representation $\rho = \exp(-W)$, where $W \in C^1(\mathbb{T})$. We  then find 
\begin{equation}
\label{eq:in-W}
\begin{aligned}
\mathcal I(\mu)
& = \int_{\mathbb{T}} \bigg\{ -\tfrac 1 2  W' \log \left( \frac{\lambda^+}{\lambda^-} \right) + W' \arcsinh \left( \frac{ W'}{2 \sqrt{\lambda^+ \lambda^-}} \right) \\
& \quad \quad \quad - \sqrt{4 \lambda^+ \lambda^- + (W')^2} + \lambda^+ + \lambda^- \bigg\} \exp(-W) \ d x.
\end{aligned}
\end{equation}
Let 
\begin{equation} \label{eq:switching-rate} \lambda^+(x) = \gamma + \max(0, U'(x)) \quad \mbox{and} \quad \lambda^-(x) = \gamma + \max(0, - U'(x)),\end{equation}
where $\gamma > 0$ is constant, so that $\lambda^{\pm}$ satisfy~\eqref{eq:switching-intensity-condition-2} and hence the measure with $\nu_0$-density $\exp(-U(x))$ is invariant. We call $\gamma$ the \emph{excessive switching intensity} or \emph{refreshment rate}.

We can now investigate the dependence of the rate function $\mathcal I$, through the expression~\eqref{eq:in-W}, on $\gamma$. 
The derivative of the integrand of~\eqref{eq:in-W}  with respect to $\gamma$ can be computed to be
\begin{equation} \label{eq:gamma-derivative} \left( \frac{4 \lambda^+ \lambda^- +(\lambda^+ - \lambda^-)W' - (\lambda^+ + \lambda^-)\sqrt{4 \lambda^- \lambda^+ + (W')^2}}{2 \lambda^+ \lambda^-}\right)\exp(-W),\end{equation}
which is non-positive, and zero only if $W' = \lambda^+ - \lambda^- = U'$ (which can be seen by maximizing with respect to $W'$).
It follows that $\mathcal I(\mu)$ is strictly decreasing as a function of $\gamma$ (for $\mu$ not equal to the stationary measure). In other words, for a smaller refreshment rate $\gamma$, the rate function increases, so that the convergence of empirical averages to equilibrium is faster.

Suppose that $\nu_0 (\{ x \in \mathbb{T} : U'(x) = 0\}) = 0$, i.e. the set of points where the derivative of $U$ vanishes is $\nu_0$-negligible. 
In the (formal) limit as $\gamma \downarrow 0$ in~\eqref{eq:switching-rate}, we obtain the following expression for $\mathcal I(\mu)$:
\begin{equation}
 \label{eq:limit-functional}
 \begin{aligned}
 \mathcal I(\mu) & = \begin{cases} \int_{\mathbb{T}} \left\{ |W'| \left( \log\left( \frac{ W'}{U'} \right) - 1 \right) + |U'| \right\}  \exp(-W) \ d x  \quad & \mbox{if $\operatorname{sign}(W') \equiv \operatorname{sign}(U')$}, \\
              \infty \quad & \mbox{otherwise}.
             \end{cases}
  \end{aligned}
\end{equation}


\section{Proofs}
\label{sec:proofs}

\subsection{Proofs of the general large deviations Theorems \ref{thm:proving_LDP:LDP_compact} and \ref{thm:proving_LDP:LDP_non_compact}}
\label{sec:aux}
In this section we give the proofs of Theorems \ref{thm:proving_LDP:LDP_compact} and \ref{thm:proving_LDP:LDP_non_compact}, which are used to obtain the large deviations principle for the empirical measures of the zig-zag process. The case of a compact state space is treated in Section \ref{section:proof-of-LDP:compact} and the non-compact case in Section \ref{section:proof-of-LDP:non-compact}. Before we embark on these proofs we outline the overall strategy; a more detailed description can be found in the book by Feng and Kurtz in \cite{FengKurtz06}. 

Consider the empirical measure
\begin{equation*}
\eta_t(\cdot) = \frac{1}{t} \int_0^t \delta_{Y_s} (\cdot) ds.
\end{equation*}
With a change of variable $s \mapsto ts$ in the integral we can express this as
\begin{align*}
	\eta_t (\cdot) = \int _0 ^1 \delta _{Y_{st} } (\cdot) ds,
\end{align*}
the empirical measure for the sped-up process (we can think of $t>1$) $Y^t _s = Y_{st}$ over the time interval $[0,1]$; in fact we will use $t=n \in \bN _+$ below. We can consider the empirical measure of this time-scaled process $Y_t$ on time intervals of lengths other than unity: for $\tau >0$ define $\eta _t ^{\tau}$ as
\begin{align*}
	\eta _t ^{\tau} (\cdot) = \int _0 ^{\tau} \delta _{Y_{st}} ds.
\end{align*}
This empirical measure is viewed as an element of $\calL (E)$, the set of Borel measures on $E \times [0, \infty)$ of the form $d\rho (x,s) = \mu_s (dx)ds$, $\mu _s \in \calP (E)$ (see Section \ref{sec:notation}). Any such $\rho \in \calL (E)$ defines a continuous path $t \mapsto \rho _t = \rho (\cdot \times [0,t]) \in \calM _f (E)$ and for $t=1$ this is a probability measure. 

The strategy for proving the large deviations principle for  $\{ \eta _t \}$ is to first show that $\{ \eta _t ^{\tau} \}$ satisfies a large deviations principle in $\calL (E)$. We can then use the fact that projections are continuous maps on $\calL (E)$ (Lemma~\ref{lemma:proofs_LDP:continuous_projection}) and an application of the contraction principle to obtain the sought-after large deviations principle on $\calP (E)$. This is summarised in the following proposition.
\begin{proposition}
\label{prop:LDP_strategy}
	Suppose that the family $\{\eta^\tau_t\}_{t > 0}$ satisfies a large deviations principle in $\mathcal{L}(E)$ with rate function $\mathcal{J}:\mathcal{L}(E) \to [0,\infty]$ given by
\begin{equation*}
\mathcal{J}(\rho) = \int_0^\infty \mathcal{I}(\mu_s) ds, \quad \text{ for } \, \rho_t = \int_0^t \mu_s ds,
\end{equation*}
where $\mathcal{I} : \mathcal{P}(E) \to [0,\infty]$ is the rate function appearing in the Donsker-Varadhan results,
\begin{equation*}
\mathcal{I}(\mu) = -\inf_{u \in \mathcal{D}^{++}(L)} \int_E\frac{Lu}{u} d\mu.
\end{equation*}
Then $\eta_t $ satisfies a large deviations principle in $\mathcal{P}(E)$ with rate function $\mathcal{I}$.
\end{proposition}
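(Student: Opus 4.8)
The plan is to deduce the large deviations principle for $\{\eta_t\}$ on $\mathcal{P}(E)$ from the assumed large deviations principle for $\{\eta^\tau_t\}$ on $\mathcal{L}(E)$ by applying the contraction principle along a well-chosen continuous map. First I would recall that, as explained in the text preceding the proposition, under the change of variables $s \mapsto ts$ one has $\eta_t = \eta^1_t(\cdot) = \eta^\tau_t(\cdot \times [0,1])$ viewed inside $\mathcal{L}(E)$; that is, the empirical measure $\eta_t$ is exactly the image of $\eta^\tau_t \in \mathcal{L}(E)$ under the projection map $\pi_1 : \mathcal{L}(E) \to \mathcal{M}_f(E)$, $\pi_1(\rho) = \rho_1 = \rho(\cdot \times [0,1])$. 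Because every $\rho \in \mathcal{L}(E)$ satisfies $\rho(E \times [0,1]) = 1$, the image $\rho_1$ is in fact a probability measure, so $\pi_1$ maps $\mathcal{L}(E)$ into $\mathcal{P}(E)$.

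The key steps, in order, are: (i) establish that $\pi_1 : \mathcal{L}(E) \to \mathcal{P}(E)$ is continuous for the respective topologies (weak convergence on bounded time intervals on the domain, weak convergence on the target); this is the content of Lemma~\ref{lemma:proofs_LDP:continuous_projection} referenced in the text (the ``projections are continuous'' statement), so I would simply invoke it, perhaps after checking that evaluation at the fixed time $t=1$ is covered by it. (ii) Apply the contraction principle (see e.g.\ \cite{DemboZeitouni1994}): since $\{\eta^\tau_t\}_{t>0}$ satisfies an LDP in $\mathcal{L}(E)$ with good rate function $\mathcal{J}$ and $\pi_1$ is continuous, $\{\pi_1(\eta^\tau_t)\}_{t>0} = \{\eta_t\}_{t>0}$ satisfies an LDP in $\mathcal{P}(E)$ with rate function
\begin{equation*}
\widetilde{\mathcal{I}}(\mu) = \inf\left\{ \mathcal{J}(\rho) : \rho \in \mathcal{L}(E),\ \pi_1(\rho) = \mu \right\}.
\end{equation*}
(iii) Identify $\widetilde{\mathcal{I}}$ with $\mathcal{I}$. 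Using $\mathcal{J}(\rho) = \int_0^\infty \mathcal{I}(\mu_s)\,ds$ for $\rho_t = \int_0^t \mu_s\,ds$, the constraint $\pi_1(\rho) = \mu$ only fixes $\frac1{1}\int_0^1 \mu_s\,ds$ as a time-average, so one lower bound comes from convexity of $\mathcal{I}$ (Jensen: $\int_0^1 \mathcal{I}(\mu_s)\,ds \geq \mathcal{I}(\int_0^1 \mu_s\,ds) = \mathcal{I}(\mu)$) together with $\mathcal{I} \geq 0$ to discard the tail $\int_1^\infty$; the matching upper bound comes from the constant path $\mu_s \equiv \mu$ for $s \in [0,1]$ and, say, $\mu_s \equiv \pi$ for $s > 1$ (so the tail contributes $\mathcal{I}(\pi) = 0$), which is an admissible $\rho \in \mathcal{L}(E)$ with $\pi_1(\rho) = \mu$ and $\mathcal{J}(\rho) = \mathcal{I}(\mu)$. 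Hence $\widetilde{\mathcal{I}} = \mathcal{I}$. (iv) Note that goodness of the rate function (lower semicontinuity and compact sublevel sets) transfers automatically under the contraction principle, so the resulting $\mathcal{I}$ is a bona fide rate function.

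The main obstacle I anticipate is step (iii), and within it the use of convexity of $\mathcal{I}$: one must know that the Donsker--Varadhan functional $\mathcal{I}(\mu) = -\inf_{u \in \mathcal{D}^{++}(L)} \int_E \frac{Lu}{u}\,d\mu$ is convex in $\mu$, which follows since it is a supremum (over $u$) of functionals that are \emph{affine} in $\mu$, and then to justify interchanging this with the time-integral via Jensen's inequality for the (possibly only measurable) path $s \mapsto \mu_s$. A minor technical point is measurability/admissibility of the concatenated path used for the upper bound and the verification that it indeed lies in $\mathcal{L}(E)$; this is routine. The continuity of $\pi_1$ (step (i)) is essentially definitional once Lemma~\ref{lemma:proofs_LDP:continuous_projection} is in hand, since evaluating a measure in $\mathcal{L}(E)$ at the fixed window $[0,1]$ is continuous for the topology of weak convergence on bounded time intervals.
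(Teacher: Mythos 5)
Your proposal is correct and follows essentially the same route as the paper's proof: continuity of the projection $\pi_1$ (Lemma~\ref{lemma:proofs_LDP:continuous_projection}), the contraction principle, and then identification of the contracted rate function with $\mathcal{I}$ via convexity of $\mathcal{I}$ plus Jensen's inequality for the lower bound and a path that is constant on $[0,1]$ (continued at zero cost after time $1$) for the upper bound. Your explicit use of $\mu_s \equiv \pi$ for $s>1$ just makes precise the paper's remark that one is ``free to choose'' $\rho$ after time $1$.
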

\begin{proof}
	Because $\{\eta^\tau_t\}_{t > 0}$ satisfies a large deviations principle on $\mathcal{L}(E)$ and the projection $\pi_1 : \mathcal{L}(E) \to \mathcal{P}(E)$ given by 
	\begin{align*}
		\pi _1 (\rho) = \rho _1,
	\end{align*}
		is continuous (Lemma~\ref{lemma:proofs_LDP:continuous_projection}), by the contraction principle the sequence $\{\eta_t ^1\}_{t > 0}$ satisfies a large deviation principle on $\mathcal{P}(E)$ with rate function $\tilde{\mathcal{I}}: \mathcal{P}(E) \to [0,\infty]$ given by
\begin{align*}
\tilde{\mathcal{I}} (\nu) &= \inf\left\{
\mathcal{J}(\rho) = \int_0^\infty \mathcal{I}(\mu_s) \, ds
:
\rho_t = \int_0^t \mu_s \, ds \in \mathcal{L}(E), \rho_1 = \nu
\right\}.
\end{align*} 
It remains to show that $\tilde{\calI} (\nu) = \calI  (\nu)$ for every $\nu \in \calP (E)$. First, in the integral defining $\calJ (\rho)$, the integrand is always positive after time $t=1$. It is therefore enough to consider only integrating to time $t=1$ in the infimum, as we are free to chose the the form of $\rho$ after that time. Thus,
\begin{align*}
	\tilde{\calI} (\nu) = \inf \left\{ \int _0 ^1 \calI  (\mu _t) dt : \rho _t = \int _0 ^t \mu_s ds \in \calL (E), \ \rho_1 = \nu \right\}.
\end{align*}

For a fixed $\nu \in \calP (E)$, take any $\rho_t = \int _0 ^t \mu _s ds \in \calL (E)$ such that $\rho _1 = \nu$. The rate function $\calI $ is convex on $\calP (E)$ and by Jensen's inequality we have
\begin{align*}
	\calI  (\nu) &= \calI  \left( \rho _1 \right) \\
	&=  \calI  \left( \int _0 ^1 \mu _s ds \right) \\
	&\leq \int _0 ^1  \calI  \left( \mu _s \right)ds.
\end{align*}
Taking the infimum over all such $\rho \in \calL(E)$ yields the inequality
\begin{align*}
	\calI  (\nu) \leq \tilde{\calI} (\nu),
\end{align*}
The constant path $\mu _s = \nu$ gives equality and we have that $\tilde{\calI} = \calI $ as functionals on $\calP(E)$.
\end{proof}
\begin{lemma}[Projection is continuous]\label{lemma:proofs_LDP:continuous_projection}
Let $\mathcal{L}(E)$ be the above space with the topology of weak convergence on bounded time intervals. Let $\mathcal{P}(E)$ be equipped with the weak topology. Then the projection $\pi_1 : \mathcal{L}(E) \to \mathcal{P}(E)$ defined by $\pi_1 (\rho) := \rho_1$ is a continuous map.
\end{lemma}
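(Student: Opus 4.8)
The plan is to unwind the definitions. The weak topology on $\calP(E)$ is, by construction, the coarsest topology making each evaluation functional $\nu \mapsto \int_E g\, d\nu$ continuous for $g \in C_b(E)$; likewise the topology of weak convergence on bounded time intervals on $\calL(E)$ is the coarsest topology making each functional $\rho \mapsto \int_{E \times [0,t]} f(x,s)\, d\rho(x,s)$ continuous, for $f \in C_b(E \times [0,\infty))$ and $t \geq 0$. Hence, to prove $\pi_1$ continuous it suffices to show that for every fixed $g \in C_b(E)$ the composed map $\rho \mapsto \int_E g\, d(\pi_1\rho) = \int_E g\, d\rho_1$ is continuous on $\calL(E)$.

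First I would note that $\pi_1$ genuinely lands in $\calP(E)$: for $\rho \in \calL(E)$ one has $\rho_1(E) = \rho(E \times [0,1]) = 1$ by the defining property $z(E \times [0,t]) = t$ of elements of $\calL(E)$, so $\rho_1 \in \calP(E)$. Next, given $g \in C_b(E)$, define $\tilde g \in C_b(E \times [0,\infty))$ by $\tilde g(x,s) := g(x)$; it is bounded and continuous, being the composition of $g$ with the (continuous) projection onto the $E$-coordinate. Using the disintegration $d\rho(x,s) = \mu_s(dx)\, ds$ with $\rho_1 = \int_0^1 \mu_s\, ds$,
\[
\int_E g\, d\rho_1 = \int_0^1 \int_E g(x)\, \mu_s(dx)\, ds = \int_{E \times [0,1]} \tilde g(x,s)\, d\rho(x,s),
\]
and the right-hand side is exactly the defining evaluation functional of the topology on $\calL(E)$, taken at $t = 1$ with test function $\tilde g$, hence continuous in $\rho$. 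This gives the desired continuity of $\rho \mapsto \int_E g\, d\rho_1$ for every $g \in C_b(E)$, and therefore of $\pi_1$.

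Equivalently, if one prefers a sequential argument (both topologies are metrizable, $E$ being Polish, so sequential continuity suffices), take $\rho_n \to \rho$ in $\calL(E)$ and apply the defining convergence with the time-independent test function $\tilde g$ and $t = 1$: this yields $\int_E g\, d(\rho_n)_1 \to \int_E g\, d\rho_1$ for all $g \in C_b(E)$, i.e.\ $(\rho_n)_1 \to \rho_1$ weakly in $\calP(E)$.

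There is no serious obstacle here: the statement is a bookkeeping exercise in the definitions. The only points that warrant a line of care are (i) making the reduction cleanly — either through the initial-topology characterization of both topologies, which sidesteps any metrizability discussion, or by invoking metrizability to pass to sequences — and (ii) verifying that the lifted test function $\tilde g$ really belongs to $C_b(E \times [0,\infty))$, so that the defining convergence on $\calL(E)$ applies to it verbatim.
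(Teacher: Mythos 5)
Your argument is correct and is essentially the paper's own proof: in both cases one lifts a test function $g \in C_b(E)$ to $\tilde g(x,s) = g(x) \in C_b(E \times [0,\infty))$, identifies $\int_E g\, d\rho_1$ with $\int_{E \times [0,1]} \tilde g\, d\rho$, and invokes the defining convergence on $\calL(E)$ at $t = 1$. The initial-topology phrasing versus the paper's sequential phrasing is only a cosmetic difference.
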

\begin{proof}[Proof of Lemma~\ref{lemma:proofs_LDP:continuous_projection}]
Let $\rho^n \to \rho$ in $\mathcal{L}(E)$. We need to prove that for any bounded and continuous function $g$ on $E$, we have
\begin{equation*}
\int_E g(u) d\rho^n_1(u) \to \int_E g(u) d\rho_1.
\end{equation*}
Since
\begin{equation*}
\int_E g(u) d\rho^n_1(u) = \int_{E \times [0,1]} g(u) d\rho^n(u,s),
\end{equation*}
and $\varphi(u,s) = g(u)$ is continuous and bounded on $E \times [0,\infty)$, this is implied by $\rho^n \to \rho$.
\end{proof}
Armed with Proposition \ref{prop:LDP_strategy}, one way to prove Theorems \ref{thm:proving_LDP:LDP_compact} and \ref{thm:proving_LDP:LDP_non_compact} is to prove the large deviations principle for the empirical measures of the associated sped-up versions of underlying processes and apply the proposition. This is the approach we take and we rely on the following result from \cite{FengKurtz06} for proving the large deviations principles on $\calL (E)$.
\begin{lemma}[Theorem~12.7 of~\cite{FengKurtz06}]
\label{lemma:FengKurtz}
Suppose that the following conditions hold:
\begin{enumerate}[label=(FK.\arabic*)]
	\item The martingale problem for $L$ is well-posed. \label{cond:FK1}
	\item The semigroup $S$ is Feller-continuous. \label{cond:FK2}
	\item The semigroup $S$ is $buc$-continuous. \label{cond:FK3}
	\item There is an index set $\calQ$ and a family of subsets of $E$, $\{ \tilde{K} ^q _n \subset E : q \in \calQ\}$, such that for $q_1, q_2 \in \calQ$, there exists $q_3 \in \calQ$ with $\tilde{K} ^{q_1} _n \cup \tilde{K} ^{q_2} _n \subset \tilde{K} ^{q_3} _n$, and for every $y \in E$, there exists $q \in \calQ$ such that $\lim _{n\to \infty} d (y, \tilde{K} ^{q} _n) =0$. Moreover, for each $q \in \calQ$, $T>0$ and $a >0$, there exists a $\hat q (q,a,T) \in \calQ$ satisfying 
	\begin{align*}
		\limsup _{n \to \infty} \sup _{y \in \tilde{K} ^q _n} \frac{1}{n} \log \bP_{y} \left( Y_t \notin \tilde{K} ^{\hat q (q,a,T)} _n, \ \textrm{some } t \leq nT \right) \leq -a.
	\end{align*}
	\label{cond:FK4}
	\item There exists an upper semicontinuous function $\Psi$ on $E$, $\{ \varphi _n \} \subset \calD ^{++} (B_0)$, and $q_0 \in \calQ$ such that $\Psi$ is bounded above, $\{ y\in E: \Psi(y) \geq c \}$ is compact for each $c \in \bR$, $0 < \inf _{y \in K _n ^{q_0}} \varphi _n (y) < 2 \inf _{y\in E} \varphi _n(y)$, $\inf_{n, y \in E} \varphi_n (y) >0$,
	\begin{align*}
		\lim _{n\to \infty} \frac{1}{n} \log \norm{\varphi _n} =0, \ \ \sup_{n,y} \frac{L \varphi _n (y) }{ \varphi_n (y)} < \infty,
	\end{align*} 
	and for each $q \in \calQ$,
	\begin{align*}
		\lim _{n \to \infty} \sup _{y \in \tilde{K} ^q _n} \left( \frac{L \varphi _n (y) }{ \varphi_n (y)} - \Psi (y) \right) \leq 0, \ \ q\in \calQ.
	\end{align*}
	In addition, for each $n$ and $\beta \in (-\infty, 1]$,
	\begin{align*}
		\lim _{t\to 0} \norm{S(t) \varphi_n ^{\b} - \varphi _n ^\b} =0.
	\end{align*}
	\label{cond:FK5}
	\item For each $a >0$ there exists compact $K$ and $q \in \calQ$ such that
	\begin{align*}
		\limsup _{n \to \infty} \frac{1}{n} \log \bP \left( Y^n _0 \notin K \cap \tilde{K} ^{q} _n  \right) \leq -a.
	\end{align*}
	\label{cond:FK6}
	\item Take $\calC \subset C_b (E)$ separating and define, with $\Psi$ as in \ref{cond:FK5}, 
	\begin{align*}
	H_1 ^{\beta, \Psi}& = \inf_{0 < \kappa \leq 1} \inf_{f \in \mathcal{D}^{++}(L)} \sup_{y \in E} \left[ \beta (y) \cdot p + (1-\kappa) \frac{Lf(y)}{f(y)} + \kappa \Psi(y)\right],\\
	H_2 ^{\beta, \Psi} &= \sup_{\kappa > 0} \sup_{f \in \mathcal{D}^{++}(L)} \inf_{y \in E} \left[ \beta (y) \cdot p + (1+\kappa) \frac{Lf(y)}{f(y)} - \kappa \Psi(y)\right].
	\end{align*}
	It holds that $H_1 ^{\b, \Psi} \leq H_2 ^{\b, \Psi}$, for $\b \in \calC ^d$, $d=1,2.\dots$.  \label{cond:FK7}
\end{enumerate}
Then $\{ \eta ^{\tau} _n \} _n$ satisfies the large deviations principle in $C_E[0, \infty)$ with rate function
\begin{align*}
\label{eq:rateFK}
	\hat \calJ (\rho) = \int _0 ^\infty  I^{\Psi} (\rho_s) ds, \ \rho \in \calL (E), 
\end{align*}
where 
\begin{align*}
   I^{\Psi} (\mu) = -\min\left[\inf _{u \in \calD^{++} (L)}  \int _{E} \frac{Lu}{u} d\mu , \int _E \Psi d\mu\right]. 
\end{align*}
\end{lemma}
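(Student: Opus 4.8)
The statement is a specialization of Theorem~12.7 in~\cite{FengKurtz06}, so the plan is to verify that conditions~\ref{cond:FK1}--\ref{cond:FK7} are, in our notation, exactly the hypotheses of that theorem, and then to quote it. First I would set up the reduction that underlies the whole scheme: passing from the time-rescaled $E$-valued process $Y^n_s = Y_{ns}$ to the $\calM_f(E)$-valued occupation-measure path $t \mapsto \eta^\tau_n(\cdot\times[0,t])$, which is automatically continuous, and viewing $\{\eta^\tau_n\}$ as a sequence of random elements of the path space of measures. The Feng--Kurtz machinery then applies the path-space large deviation results of~\cite[Chapters~7--8]{FengKurtz06} to this measure-valued process, and the job reduces to matching hypotheses.

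The core of the argument I would recall is the analysis of the nonlinear (``large deviation'') semigroups $V_n(t) h = \tfrac1n \log \bE\big[ e^{n h(\eta^\tau_n(t))} \big]$, acting on bounded continuous functions $h$ of the occupation measure. Conditions~\ref{cond:FK1}--\ref{cond:FK3} (well-posedness of the martingale problem, Feller- and $buc$-continuity of $S$) provide the regularity needed to define $V_n$ and to identify the limiting generator; the Lyapunov-type data in~\ref{cond:FK5} --- the functions $\varphi_n \in \calD^{++}(B_0)$ and the upper semicontinuous, bounded-above $\Psi$ with compact superlevel sets --- control the generators uniformly and are precisely what produces the ``$\min$ with $\int \Psi\,d\mu$'' in the rate function; the nested compact family $\{\tilde K^q_n\}$ together with the escape estimate~\ref{cond:FK4} and the initial-condition estimate~\ref{cond:FK6} give exponential tightness (compact containment) of $\{\eta^\tau_n\}$ in the path space and confine the limit to paths in $\calL(E)$; and the comparison inequality $H_1^{\beta,\Psi} \le H_2^{\beta,\Psi}$ in~\ref{cond:FK7} is exactly the hypothesis Feng and Kurtz use to obtain a comparison principle for the limiting Hamilton--Jacobi equation, hence uniqueness of its viscosity solution, hence convergence $V_n(t) \to V(t)$ to the Nisio semigroup of a limiting variational problem. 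From that variational representation one reads off the rate function as the action functional $\hat\calJ(\rho) = \int_0^\infty I^\Psi(\rho_s)\,ds$, which for the occupation-measure dynamics collapses to the stated form with $I^\Psi(\mu) = -\min\big[\inf_{u\in\calD^{++}(L)}\int_E \tfrac{Lu}{u}\,d\mu,\ \int_E \Psi\,d\mu\big]$.

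The main obstacle is genuinely located in the hypotheses rather than in the present lemma: condition~\ref{cond:FK7}, the comparison of the two Hamiltonians, is the crux of the Feng--Kurtz method and the one that requires real work for concrete processes --- in our application it is discharged through the principal-eigenvalue problem~\ref{item:thm_LDP_compact:principal_eigenvalue} in the compact case and the mixing estimate~\ref{item:thm_LDP_non_compact:mixing} in the non-compact case, which occupy Sections~\ref{section:proof-of-LDP:compact} and~\ref{section:proof-of-LDP:non-compact}. Within the proof of the present statement, the only genuine care needed is bookkeeping: checking that the domains and generators used here ($\calD^{++}(B_0)$ versus $\calD^{++}(L)$, and the extended generator $B$), the scaling conventions (rescaling time by $n$, the admissible exponents $\beta \in (-\infty,1]$ and the associated strong-continuity requirement $\norm{S(t)\varphi_n^{\beta} - \varphi_n^{\beta}} \to 0$), and the topology on $\calL(E)$ all coincide with those of~\cite[Theorem~12.7]{FengKurtz06}, after which the conclusion follows verbatim.
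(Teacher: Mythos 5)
Your proposal matches the paper exactly: this lemma is not proved in the paper but is imported verbatim as Theorem~12.7 of Feng and Kurtz, so the correct argument is precisely the hypothesis-matching and citation you describe, and your sketch of the underlying machinery (nonlinear semigroups, exponential tightness from \ref{cond:FK4}--\ref{cond:FK6}, comparison principle from \ref{cond:FK7}, and the resulting action-functional form of the rate) is a faithful summary of the cited proof. No gaps.
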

\subsubsection{Proof of Theorem~\ref{thm:proving_LDP:LDP_compact}---compact state-space $E$}
\label{section:proof-of-LDP:compact}
As outlined in the previous section, we can prove Theorem \ref{thm:proving_LDP:LDP_compact} by first verifying the conditions of Lemma \ref{lemma:FengKurtz} under the given assumptions and then apply Proposition \ref{prop:LDP_strategy}.
\begin{proof}[Proof of Theorem \ref{thm:proving_LDP:LDP_compact}]
First, Conditions \ref{cond:FK1}-\ref{cond:FK3} follow from the assumption of Feller continuity \ref{item:thm_LDP_compact:Feller} and tightness; see e.g.\ Remark 11.22 in \cite{FengKurtz06}.


Next, Conditions \ref{cond:FK4} and \ref{cond:FK5} always hold for compact $E$: take $\varphi_n \equiv 1$, $\Psi \equiv 0$, $Q = \{q\}$ (singleton), and $K^q_n :=E$ for every $n \in \mathbb{N}$.  For this choice both conditions are met---it is only for non-compact spaces $E$ that these conditions become non-trivial (see the proof of Theorem \ref{thm:proving_LDP:LDP_non_compact}). Condition \ref{cond:FK6} is trivially true for compact $E$.

Remains to verify the inequality $H_1 ^{\beta} \leq H_2 ^{\beta}$. Take $d \geq 1$ and $\beta \in \calC ^d$. With the choice $\Psi \equiv 0$ the definitions of $H_i ^{\beta}: \bR ^d \to \bR$, $i=1,2$, become
	\begin{align*}
	H_1 ^{\beta}& =  \inf_{f \in \mathcal{D}^{++}(L)} \sup_{y \in E} \left[ \beta (y) \cdot p + \frac{Lf(y)}{f(y)} \right],\\
	H_2 ^{\beta} &= \sup_{f \in \mathcal{D}^{++}(L)} \inf_{y \in E} \left[ \beta (y) \cdot p + \frac{Lf(y)}{f(y)} \right].
	\end{align*}
We now show that the required inequality follows from Assumption \ref{item:thm_LDP_compact:principal_eigenvalue}, solvability of the principal eigenvalue problem. 

For any $\beta \in \calC ^d$ and $p \in \bR ^d$, define the map $V_p (y): E \to \bR$ as
\begin{align*}
	V_p (y) = \beta (y) \cdot p
\end{align*}
This is a continuous function on $E$ and for every $p$ there exists a function $f_p \in \calD ^+ (L)$ and real eigenvalue $\lambda_p$ such that
\begin{align*}
	(L + \beta \cdot p) f_p = \lambda _p f_p.
\end{align*}
It follows that, for any $p \in \mathbb{R}^d$, we have 
	\begin{align*}
		\lambda _p = \sup_{y \in E}
	\left[
	\frac{Lf_p(y)}{f_p(y)} + \beta(y) \cdot p
	\right] = \inf_{y \in E}
	\left[
	\frac{Lf_p(y)}{f_p(y)} + \beta(y) \cdot p
	\right],
	\end{align*}
	which leads to the upper bound
	\begin{align*}
	H^\beta_1(p) &=
	\inf_{f \in \mathcal{D}^{+}(L)} \sup_{y \in E} \left[
	\frac{Lf(y)}{f(y)} + \beta(y) \cdot p
	\right]	\\ &\leq 
	\sup_{y \in E}
	\left[
	\frac{Lf_p(y)}{f_p(y)} + \beta(y) \cdot p
	\right]	\\ &=
	\inf_{y \in E}
	\left[
	\frac{Lf_p(y)}{f_p(y)} + \beta(y) \cdot p
	\right]	\\	&\leq
	\sup_{f \in \mathcal{D}^{+}(L)} \inf_{y \in E} \left[
	\frac{Lf(y)}{f(y)} + \beta(y) \cdot p
	\right] \\ &= H^\beta_2(p).
	\end{align*}
This shows that Condition \ref{cond:FK7} of Lemma \ref{lemma:FengKurtz} follows from \ref{item:thm_LDP_compact:principal_eigenvalue}. 
As a result, in the setting of compact $E$, Assumptions  \ref{item:thm_LDP_compact:Feller} - \ref{item:thm_LDP_compact:principal_eigenvalue} ensure that Lemma \ref{lemma:FengKurtz} is applicable. This gives the large deviations principle for the empirical measures associated with sped-up versions of the process $Y$ and Proposition \ref{prop:LDP_strategy} transfers this to the empirical measures of the original process. This concludes the proof of the large deviations principle. The form of the rate function is trivially seen to be equal to the prescribed form because of the choice of $\Psi \equiv 0$.
\end{proof}
\subsubsection{Proof of Theorem~\ref{thm:proving_LDP:LDP_non_compact}---non-compact state space $E$}
\label{section:proof-of-LDP:non-compact}

As in the proof of Theorem~\ref{thm:proving_LDP:LDP_compact}, we prove large deviations of the family of measures $\{\eta_t ^\tau\}_{t > 0}$ introduced at the beginning of Section \ref{sec:aux} by verifying the assumptions of Lemma \ref{lemma:FengKurtz}. Proposition \ref{prop:LDP_strategy} then implies the large deviations principle of the empirical measures $\{\eta_t\}_{t > 0}$ with the prescribed rate function. Whereas the conditions of Lemma \ref{lemma:FengKurtz} where straightforward to verify in the compact setting of Theorem \ref{thm:proving_LDP:LDP_compact}, the non-compact case requires more work. Specifically, because we can no longer assume that there is a solution to the principal eigenvalue problem - such an assumption would not allow us to prove the large deviations principle for the zig-zag process - and the state space is no longer compact, \ref{cond:FK4}-\ref{cond:FK7} are more difficult to verify. A crucial component of the proof of Theorem \ref{thm:proving_LDP:LDP_non_compact} is an inequality that is connected to the necessary comparison principle. To streamline the proof we now state this inequality as a separate result.

For any $V \in C_b(E)$ and $\Psi : E \to \bR$, define $H_1 ^{\Psi}, H_2 ^{\Psi} \in \bR$ by
\begin{equation}
	\begin{split}
	H_1 ^{\Psi} &= \inf_{0 < \kappa \leq 1} \inf_{f \in \mathcal{D}^{++}(L)} \sup_{y \in E} \left[ V(y) + (1-\kappa) \frac{Lf(y)}{f(y)} + \kappa \Psi(y)\right], \label{eq:H1_H2} \\
	H_2 ^{\Psi} &= \sup_{\kappa > 0} \sup_{f \in \mathcal{D}^{++}(L)} \inf_{y \in E} \left[ V(y) + (1+\kappa) \frac{Lf(y)}{f(y)} - \kappa \Psi(y)\right].
\end{split}
\end{equation}
\begin{proposition}\label{prop:non_compact:H1_leq_H2}
	Take any $V \in C_b (E)$ and suppose~\ref{item:thm_LDP_non_compact:mixing} holds and that for any $c\in\mathbb{R}$, the superlevel-set $\{\Psi \geq c\}$ is compact. Then
	\begin{align}
	\label{eq:ineqH1H2}
		H_1 ^{\Psi} \leq H _2 ^{\Psi}.
	\end{align}
\end{proposition}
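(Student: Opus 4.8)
The plan is to realise both $H_1^\Psi$ and $H_2^\Psi$ through the Feynman--Kac semigroup $T_t^V g(y) := \mathbb{E}_y\left[\exp\left(\int_0^t V(Y_s)\,ds\right)g(Y_t)\right]$ and its resolvents, and to use Assumption~\ref{item:thm_LDP_non_compact:mixing} to show that the exponential growth rate of this semigroup does not depend on the starting point, on the compact set to which the truncation by $\Psi$ confines the relevant variational problems. Since $V\in C_b(E)$, the operators $T_t^V$ form a bounded positive semigroup, and for $\alpha > \|V\|$ the resolvent $h_\alpha := \int_0^\infty e^{-\alpha t}T_t^V\mathbf{1}\,dt$ is a well-defined element of $\mathcal{D}^{++}(L)$ (bounded below by $(\alpha+\|V\|)^{-1}$, above by $(\alpha-\|V\|)^{-1}$, and in the domain since the resolvent maps into it) satisfying the identity $V + \frac{Lh_\alpha}{h_\alpha} = \alpha - \frac{1}{h_\alpha}$. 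A standard fact about positive semigroups is that $\|h_\alpha\| = \sup_y h_\alpha(y) \to +\infty$ as $\alpha$ decreases to the growth bound $\Lambda := \lim_{t\to\infty}\frac1t\log\|T_t^V\|$ (which is finite, $|\Lambda|\le\|V\|$).

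First I would substitute the \emph{same} test function $f = h_\alpha$ into both $H_1^\Psi$ and $H_2^\Psi$, with a common regularisation parameter $\kappa\in(0,1]$. Using the identity above and that $\Psi$ is bounded above (which follows from compactness of the superlevel sets $\{\Psi\ge c\}$), an elementary computation gives, for every $\alpha>\|V\|$ and $\kappa\in(0,1]$,
\begin{equation*}
H_1^\Psi - H_2^\Psi \;\le\; -2\kappa\alpha \;+\; \sup_{y\in E}\left[\kappa(V+\Psi)(y) - (1-\kappa)\frac{1}{h_\alpha(y)}\right] \;+\; \sup_{y\in E}\left[\kappa(V+\Psi)(y) + (1+\kappa)\frac{1}{h_\alpha(y)}\right].
\end{equation*}
Because $\Psi(y)\to-\infty$ away from compact sets and $h_\alpha$ is uniformly bounded away from $0$, for each fixed $\kappa$ both suprema are attained on a fixed compact set $K_\kappa$, valid for all $\alpha$ in a right-neighbourhood of $\Lambda$. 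It then suffices to show that $\inf_{y\in K_\kappa} h_\alpha(y)\to+\infty$ as $\alpha\downarrow\Lambda$: given this, the two supremum terms converge (for fixed $\kappa$) to $\kappa\sup_{K_\kappa}(V+\Psi)$ each, the right-hand side converges to $2\kappa\left(\sup_{K_\kappa}(V+\Psi) - \Lambda\right)$, and since $\Psi\le\sup\Psi<\infty$ this tends to $0$ as $\kappa\downarrow0$; hence $H_1^\Psi\le H_2^\Psi$.

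The statement $\inf_{y\in K_\kappa}h_\alpha(y)\to+\infty$ --- equivalently, that the growth rate of $\mathbb{E}_y[\exp(\int_0^t V)]$ is the same for all $y$ in the compact set $K_\kappa$ --- is where Assumption~\ref{item:thm_LDP_non_compact:mixing} is used. One applies the comparison~\eqref{eq:proving_LDP:LDP_non_compact:mixing} with $\nu_1 = \delta_y$ ($y\in K_\kappa$) and $\nu_2$ a fixed reference law supported in $K_\kappa$, which is admissible precisely because these initial laws are compactly supported; one then transports the inequality from transition kernels to the weighted expectations by inserting the factor $\exp(\int_0^t V)$, using the Markov property and $\|V\|<\infty$ to absorb the discrepancy between $\rho_1,\rho_2$-averaged and genuine transition probabilities, and iterating along the Chapman--Kolmogorov equation. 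This yields $\mathbb{E}_y[\exp(\int_0^t V)]\ge c_\kappa\,\sup_{z\in K_\kappa}\mathbb{E}_z[\exp(\int_0^t V)]$ for all large $t$ and all $y\in K_\kappa$, which combined with $\sup_z h_\alpha(z)\to\infty$ gives the claim. (Alternatively, one may organise the whole proof as an adaptation of the comparison-principle argument in \cite[Chapter~12]{FengKurtz06}, observing that only compactly supported initial laws enter the relevant estimates, so that \ref{item:thm_LDP_non_compact:mixing} can replace the stronger Condition~B.8 used there.)

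The main obstacle is this last passage: extracting from the weakened, finite-horizon, time-averaged inequality~\eqref{eq:proving_LDP:LDP_non_compact:mixing} a genuinely asymptotic statement about equality of exponential growth rates \emph{uniformly over the compact set $K_\kappa$}, while tracking how $K_\kappa$ and the associated mixing constants depend on $\kappa$ and justifying the order of the limits $\alpha\downarrow\Lambda$ then $\kappa\downarrow0$. A subsidiary point is obtaining mixing constants that are uniform over all pairs $y,z\in K_\kappa$ rather than for one pair at a time, which follows from a routine compactness argument exploiting the freedom in the choice of $\nu_1,\nu_2$.
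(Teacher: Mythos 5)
Your algebraic reduction is fine: plugging the single resolvent $h_\alpha$ into both functionals, using $V+\tfrac{Lh_\alpha}{h_\alpha}=\alpha-\tfrac{1}{h_\alpha}$, and confining the two suprema to a fixed compact set $K_\kappa$ via the superlevel-set condition on $\Psi$ is correct, and the use of the Feynman--Kac semigroup together with \ref{item:thm_LDP_non_compact:mixing} is in the same spirit as the paper's proof (which sandwiches $H_1^\Psi\leq c_V^{**}=c_V^{*}\leq H_2^\Psi$, where $c_V(\nu)$ is the exponential growth rate of $\bE_\nu[\exp(\int_0^t V(Y_s)ds)]$, shown independent of $\nu\in\calP_c(E)$ by the mixing condition). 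The gap is in the step you yourself flag as the crux: the claim that $\sup_y h_\alpha(y)\to\infty$ as $\alpha$ decreases to the critical value, upgraded to $\inf_{K_\kappa}h_\alpha\to\infty$. The ``standard fact about positive semigroups'' (spectral bound in the spectrum, resolvent blow-up, spectral bound equal to growth bound) belongs to strongly continuous positive semigroups on Banach lattices; here $T^V_t$ acts on $C_b(E)$ with $E$ non-compact and is only Feller/buc-continuous, so that machinery does not apply. Worse, even pointwise the Laplace transform $h_\alpha(y)=\int_0^\infty e^{-\alpha t}T^V_t\mathbf{1}(y)\,dt$ may converge \emph{at} its abscissa of convergence (e.g.\ if $T^V_t\mathbf{1}(y)$ behaves like $e^{\lambda_c t}t^{-2}$), in which case $h_\alpha$ stays bounded as $\alpha$ decreases to the critical value and no finite $\alpha_c$ with the required blow-up exists. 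Excluding this behaviour amounts to principal-eigenvalue-type information, which is precisely what is unavailable in the non-compact setting and what the $\Psi$-regularised formulation of \ref{cond:FK7} is designed to avoid. There is also a mismatch in the mixing step: the divergence you can hope to import is that of $\sup_{z\in E}h_\alpha(z)$, but \ref{item:thm_LDP_non_compact:mixing} only compares compactly supported initial laws with constants $T,M$ depending on the pair, and your own inequality compares $\bE_y$ with $\sup_{z\in K_\kappa}\bE_z$; this does not connect to the global supremum unless you can compare points of $K_\kappa$ uniformly with points escaping to infinity, which the weakened condition deliberately does not provide.

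The paper circumvents both issues by never working at the critical parameter. For the lower bound on $H_2^\Psi$ it fixes $\lambda<c_V^{*}$ and uses the finite-horizon resolvents $R_\lambda^t\mathbf{1}=\int_0^t e^{-\lambda s}T(s)\mathbf{1}\,ds$ and their mixtures $f_\gamma$ as test functions, for which the error term $\int_E e^{-\lambda t}T(t)\mathbf{1}\,d\nu-1$ diverges along a subsequence simply because $\lambda$ lies strictly below the growth rate of the Feynman--Kac semigroup -- no statement at the critical value is needed. The upper bound on $H_1^\Psi$ is proved separately, by a Sion minimax argument over convex combinations of test functions, confined to the compact sets $\{\Psi\geq-\ell\}$, together with the Lemma B.11-type bound $\sup_{\nu\in\calP_c(E)}\inf_f\int(V+\tfrac{Lf}{f})d\nu\leq c_V^{**}$. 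So your proposal is a genuinely different and more economical scheme, but as written it rests on an unproved (and in this generality false) resolvent blow-up; to repair it you would either have to establish that blow-up -- effectively solving the eigenvalue problem the theorem is meant to avoid -- or revert to the two-sided comparison through the constant $c_V$ as in the paper.
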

We first complete the proof of Theorem \ref{thm:proving_LDP:LDP_non_compact}.
\begin{proof}[Proof of Theorem \ref{thm:proving_LDP:LDP_non_compact}]
The proof amounts to showing that Conditions \ref{cond:FK1}-\ref{cond:FK7} of Lemma \ref{lemma:FengKurtz} hold. We start with the ones that are straightforward to obtain from the assumptions of the theorem. 

Conditions \ref{cond:FK1}-\ref{cond:FK3} follow from \ref{item:thm_LDP_compact:Feller} and \ref{item:thm_LDP_compact:tight}. For condition \ref{cond:FK6} the existence of such a compact set follows immediately from the assumption that the initial value $Y(0)$ belongs to a compact set $K \subseteq E$.

We now show that Conditions \ref{cond:FK4} and \ref{cond:FK5} follow from \ref{item:thm_LDP_non_compact:Lyapunov}, the existence of Lyapunov functions $g_1$ and $g_2$ with certain growth properties. We start with \ref{cond:FK4} and define the family of compact sets $K^q_n \subseteq E$ by
\begin{equation*}
K^q_n = \{y \in E\,:\, g_2(y) \leq qn\},\quad q,n \in \mathbb{N}
\end{equation*}
For any $q_1, q_2$ and with $q_3 = \max (q_1, q_2)$, it then holds that
\begin{align*}
	K^{q_1}_n \cup K^{q_2}_n \subseteq K^{q_3}_n, \ \ \forall n \in \bN.
\end{align*}
Because $g_2 (y)$ is finite for any $y\in E$, there exists $q, N \in \bN$ such that $n \geq N$ implies that $y \in K_n ^{q}$. In particular, $\textrm{dist} (y, K_n ^q) = 0$. For the last part of Condition \ref{cond:FK4}, take $q \in \bN$ and $T, a > 0$. It remains to find a $\tilde q$ such that
\begin{align*}
	\limsup _{n \to \infty} \sup _{y \in K  ^{q} _n} \frac{1}{n} \log \bP _y \left( Y_t \notin K ^{\tilde q} _n, \ \textrm{some } t \leq nT \right) \leq -a.
\end{align*}
By Lemma 4.20 in \cite{FengKurtz06}, for any open neighbourhood $\calO$ of $K_n ^q$,
\begin{align}
\label{eq:openN}
	\mathbb{P}\left(Y_t \notin \mathcal{O}, \,\mathrm{some}\, t \leq nT \,|\, Y_0 \in K^q_n\right) \leq \mathbb{P}\left(Y_0 \in K^q_n\right) e^{-\beta _q + nT\gamma(\mathcal{O})},
\end{align}
where the constants $\beta_q$ and $\gamma(\mathcal{O})$ are given by
\begin{align*}
\beta_q = \inf_{E \setminus \mathcal{O}} g_2 - \sup_{K^q_n} g_2,
\end{align*}
and
\begin{align*}
\gamma(\mathcal{O}) = \max \left(\sup_{\mathcal{O}}e^{-g_2} B e^{g_2}, 0\right).
\end{align*}
By the growth condition for $g_2$ (part (a) of \ref{item:thm_LDP_non_compact:Lyapunov}) for any $\tilde q > \hat q > q$ large enough, there exists an open set $\calO$ such that
\begin{align*}
	K_n ^q \subseteq K_n ^{\hat q} \subseteq \calO \subseteq K^{\tilde q} _n.
\end{align*} 
By definition, $g_2 \leq nq$ on $K ^q _n$ and because $K ^{\hat q} _n \subseteq \calO$, we have $g_2 \geq \hat{q} n$ on $E \setminus \calO$. Combined with the upper bound $\gamma (\calO) \leq \gamma (E)$ this gives, starting from \eqref{eq:openN},
\begin{align*}
	\frac{1}{n} \log \bP \left( Y_t \notin K ^{\tilde q} _n, \ \textrm{some } t\leq nT | Y_0 \in K_n ^q \right) &\leq T \gamma(\calO) - \frac{1}{n} \beta _q \\
	&\leq T \gamma(E) +q - \hat{q}.
\end{align*}
The last part of Condition \ref{cond:FK4} is now straightforward to obtain. First, take $\hat q = \hat q (q, a, T)$ large enough that the right-hand side of the last display is bounded by $-a$:
\begin{align*}
	T \gamma(E) +q - \hat{q} \leq -a.
\end{align*}
Next, choose $\tilde q = \tilde q (q, a, T)$ large enough that there is an open set $\calO$ such that $K ^{\hat q} _n \subseteq \calO \subseteq K^{\tilde q} _n$. The asymptotic statement then follows, which concludes the verification of condition \ref{cond:FK4} of Lemma \ref{lemma:FengKurtz}.
\smallskip

To show that condition \ref{cond:FK5} is fulfilled we generalize the arguments used in Example 11.24 in \cite{FengKurtz06}. The functions $\varphi_n$ are constructed from the Lyapunov functions $g_1$ and $g_2$. First, define
\begin{equation}\label{eq:proving_LDP:r_n}
r_n := \sup \left\{g_1(y) \,: \, y \in E,\, g_1(y) g_2(y) \leq n^2\right\}.
\end{equation}
Then $r_n \to \infty$ and $r_n/n \to 0$ as $n \to \infty$, since~\ref{item:thm_LDP_non_compact:Lyapunov2} and the condition in the set imply
\begin{equation*}
\frac{r_n}{n} = \frac{g_1(y_n)}{n} \leq \sqrt{\frac{g_1(y_n)}{g_2(y_n)}} \to 0.
\end{equation*}
Furthermore, for each $q$ there exists $n_q$ such that $n \geq n_q$ implies
\begin{equation*}
K^q_n \subseteq \left\{y\,:\, g_1(y) \leq r_n\right\}.
\end{equation*}
For a smooth, non-decreasing and concave function $\rho : [0,\infty) \to [0,2]$ satisfying $\rho(r) = r$ for $0 \leq r \leq 1$ and $\rho(r) = 2$ for $r \geq 3$, define the functions $\varphi_n$ by cutting off $g_1$:
\begin{equation}\label{eq:proving_LDP:varphi_n}
\varphi_n(y) := e^{r_n} \rho(e^{-r_n} e^{g_1(y)}).
\end{equation}
We have $\varphi_n = e^{g_1}$ on the compact sets $K^q_n$. Setting $\Psi = e^{-g_1} B e^{g_1}$, we therefore obtain
\begin{equation*}
\frac{L \varphi_n(y)}{\varphi_n(y)} = \Psi(y),\quad y \in K^q_n,\, n \geq n_q.
\end{equation*}
The fact that $r_n/n\to 0$ as $n\to\infty$ implies $n^{-1}\log\|\varphi_n\| \to 0$ as $n\to\infty$. For proving that
\begin{equation*}
    \sup_{n,y}\frac{L\varphi_n(y)}{\varphi_n(y)}<\infty,
\end{equation*}
it is sufficient to show that for any positive function $u:E \to (0,\infty)$ in the domain of $B$ and for any $y_0 \in E$, we have
\begin{equation}\label{eq:proof-LDP-noncompact:ineq-pos-max-pr}
\frac{B (\rho(u))(y_0)}{\rho(u)(y_0)} \leq \frac{\max\left(B u(y_0),0\right)}{u(y_0)}.
\end{equation}
Then with $u=e^{-r_n}e^{g_1}$ and noting that $L\varphi_n=B\varphi_n$, by linearity we obtain
\begin{equation*}
    \frac{L\varphi_n}{\varphi_n} \leq \frac{\max\left(B e^{g_1},0\right)}{e^{g_1}},
\end{equation*}
and the result follows since $\Psi(y)=e^{-g_1(y)} \left(B e^{g_1}\right)(y)\to-\infty$ as $|y|\to\infty$. Hence we are left with verifying~\eqref{eq:proof-LDP-noncompact:ineq-pos-max-pr}.

If $u(y_0) \in (3,\infty)$, then $\rho(u)(y_0)$ is maximal. Hence by the positive maximum principle, $B\rho(u)(y_0) \leq 0$, and the inequality follows. If $u(y_0) \in (0,3]$, then $a_0 := \rho'(u(y_0)) \in [0,1]$, the region where $\rho$ goes from slope one to slope zero. Consider the function $f_0 := \rho(u) - a_0 u$. Since $g_0(s) := \rho(s) - a_0 s$ is maximal for $s_0$ satisfying $\rho'(s_0) = a_0$, we obtain that $y_0$ is an optimizer, that is $f_0(y_0) = \sup_y f(y)$. Furthermore, $g_0(s_0) \geq 0$, so by the positive maximum principle $Bf_0(y_0) \leq 0$. By linearity of $B$ and since $r \leq \max(r,0)$, we obtain the inequality $B \rho(u)(y_0) \leq a_0 \cdot \max\left(Bu(y_0),0\right)$. Hence
\begin{equation*}
\frac{B \rho(u)(y_0)}{\rho(u)(y_0)} \leq \frac{a_0}{\rho(u)(y_0)}\max(Bu(y_0),0) \leq \frac{1}{u(y_0)} \max(Bu(y_0),0),
\end{equation*}
using that $0 \leq g_0(s_0) = \rho(u)(y_0)-a_0 u(y_0)$. This finishes the verification of~\eqref{eq:proof-LDP-noncompact:ineq-pos-max-pr}.
\smallskip

It remains to show that condition \ref{cond:FK7} is fulfilled. However, this is precisely the conclusion of Proposition \ref{prop:non_compact:H1_leq_H2} - the function $V(y) = \beta (y)\cdot p$ where $\beta$ is as in condition (vii) is an element of $C_b (E)$, and by~\ref{item:thm_LDP_non_compact:Lyapunov3}, the function $\Psi$ has compact superlevel-sets.
\smallskip

We have shown that under the assumptions of the theorem, all conditions of Lemma \ref{lemma:FengKurtz} are fulfilled. The large deviations principle for the empirical measures of the sped-up versions thus holds and Proposition \ref{prop:LDP_strategy} then gives the large deviations principle for the empirical measures $\{ \eta _t \}$ associated with $Y$. 

We are left with showing that the rate function $I ^{\Psi}$ of Proposition \ref{prop:LDP_strategy} satisfies
\begin{align*}
	I ^{\Psi} (\mu) = -\inf _{u \in \calD^{++} (L)}  \int _{E} \frac{Lu}{u} d\mu.
\end{align*}
Below, we prove that
\begin{equation*}
    \limsup_{n\to\infty}\int_{E}\frac{L\varphi_n}{\varphi_n} d\mu \leq \int_E \Psi d\mu.
\end{equation*}
Then 
\begin{equation*}
    \inf_{u \in \calD^{++} (L)}\int_E\frac{Lu}{u} d\mu \leq \inf_n \int_E \frac{L\varphi_n}{\varphi_n} d\mu \leq \int_E \Psi d\mu,
\end{equation*}
and hence the rate function is given by
\begin{align*}
	I^{\Psi} (\mu) = -\min \left[ \inf _{u \in \calD^{++} (L)}  \int _{E} \frac{Lu}{u} d\mu,  \int _E \Psi d\mu \right] = -\inf _{u \in \calD^{++} (L)}  \int _{E} \frac{Lu}{u} d\mu.
\end{align*}
To see that the functions $\varphi_n$ satisfy the limsup inequality, note that $\Psi$ has compact super-level sets and $\Psi(y)\to-\infty$ as $|y|\to\infty$. Since the compact sets $K_n^q$ exhaust $E$ in the sense that $E = \cup_n K^q_n$ and $K^q_n\subseteq K^q_{n+1}$, there exists a constant $C > 0$ such that
\begin{equation*}
    f_n = -\frac{L\varphi_n}{\varphi_n} + C \geq 0 .
\end{equation*}
Pointwise, we have $f=-\Psi + C = \liminf_n f_n$. Therefore, by Fatou's lemma
\begin{equation*}
    \liminf_{n\to\infty} \int_E \left[-\frac{L\varphi_n}{\varphi_n} + C\right]\,d\mu \geq \int_E\left[-\Psi + C\right]\,d\mu,
\end{equation*}
and the required limsup inequality follows from reorganizing.
\end{proof}

We now prove the important Proposition \ref{prop:non_compact:H1_leq_H2}. The proof is essentially a combination of different arguments from Chapter 11 and Appendix B of \cite{FengKurtz06} (see especially Lemmas 11.12, 11.37, B.9-B.11 for full details). We present the proof as to make the presentation self-contained and give a succinct derivation of the results for the setting considered in this paper. The main difference compared to the arguments in \cite{FengKurtz06} is that we work with measures $\nu \in \calP _c (E) $ rather than imposing the condition $\int _E \Psi \nu > -\infty$, and we must verify that we can indeed modify the latter.

\begin{proof}[Proof of Proposition \ref{prop:non_compact:H1_leq_H2}]
The strategy is to find two constants, depending on $V$, $c_V^\ast$ and $c_V ^{\ast\ast}$ such that $c_V ^\ast \geq c_V ^{\ast \ast} $ and
\begin{align}
\label{eq:ineqH}
	H_1 ^{\Psi} \leq c_V ^{\ast\ast}, \, \, \textrm{and }\,  H_2 ^{\psi} \geq c_V ^\ast,
\end{align} 
To achieve this we study the following quantity: for $\nu \in \calP_c (E)$, define
\begin{align*}
	c_V (\nu) = \limsup _{t \to \infty} \frac{1}{t} \log \bE \left[ \mathrm{exp}\left\{ \int _0 ^t V(Y(s))ds \right\}  \right].
\end{align*}
It can be shown - see e.g. Lemma B.9 in \cite{FengKurtz06} - that under \ref{item:thm_LDP_non_compact:mixing}, $c_V (\nu)$ exists for each $\nu \in \calP _c (E)$ and the necessary inequalities for $H_i ^{\Psi}$ can be derived for
\begin{align*}
	c_V^{\ast} = \inf_{\nu \in \calP _c (E)} c_V(\nu) \quad \text{ and } \quad
	c_V^{\ast\ast} = \sup_{\nu \in \calP _c (E)} c_V(\nu).
\end{align*}
Cleary $c_V ^\ast \leq c_V ^{\ast \ast}$. However it can be shown, again using~\ref{item:thm_LDP_non_compact:mixing}, that the two quantities are in fact equal, that $c_V (\nu)$ is independent of $\nu$ on $\calP _c (E)$. If we can prove \eqref{eq:ineqH} this would then yield the claim.

We start with the upper bound
\begin{align*}
	H_1 ^{\Psi} \leq c_V ^{\ast\ast}.
\end{align*}
An argument similar to what will follow is also used in \cite{DonskerVaradhan75}, in the proof of their Lemma~2.

Because $\Psi$ has compact superlevel-sets $\{ \Psi \geq c \}$, $c\in \bR$, and $\Psi (y) \to - \infty$ as $\norm{y} \to \infty$, it can be shown using the arguments of Lemma B.11 of \cite{FengKurtz06} that
\begin{align*}
	\sup_{\nu \in \mathcal{P}_c(E)} \inf_{f \in \mathcal{D}^{++}(L)} \int_E \left(V + \frac{Lf}{f}\right)d\nu \leq c_V^{\ast\ast}.
\end{align*}
It therefore suffices to show that 
\begin{align}
\label{eq:ineqH1}
H_1 ^{\Psi} \leq \sup_{\nu \in \mathcal{P}_c(E)} \inf_{f \in \mathcal{D}^{++}(L)} \int_E \left(V + \frac{Lf}{f}\right)d\nu.
\end{align}
For any finite collection of functions $f_1, \dots , f_m$ in $\calD ^{++} (L)$ and scalars $\alpha_i \geq 0$, $i=1,\dots , m$, $\sum \alpha_i =1$, we have 
\begin{align*}
	H_1 ^{\Psi} \leq \inf_{0 < \kappa \leq 1} \inf_{\alpha_i} \inf_{f_1,\dots,f_m} \sup_{y \in E} \left[
	V(y) + (1-\kappa)\sum_{i=1}^m \alpha_i \frac{Lf_i(y)}{f_i(y)} + \kappa \Psi(y).
	\right]
\end{align*}
Define
\begin{align*}
	h_t = \frac{1}{t} \int _0 ^t S(\tau) \prod _{i=1} ^m f_i ^{\alpha _i} d \tau. 
\end{align*}
Then
\begin{align*}
\lim_{t \to 0} h_t = \prod_i f_i^{\alpha_i},
\end{align*}
and we have the upper bound
\begin{align*}
 \lim_{t \to 0}Lh_t \leq \prod_i f_i^{\alpha_i} \sum_i \alpha_i \frac{Lf_i}{f_i},
\end{align*}
where the convergence is uniform.

We can select a sequence of functions $\{ f_i \} $ from $\calD ^{++} (L)$ such that for any $\mu \in \calP (E)$,
	\begin{equation*}
	\inf_{i} \int_E \frac{Lf_i}{f_i} d\mu = \inf_{f \in \mathcal{D}^{++}(L)} \int_E \frac{Lf}{f} d\mu.
	\end{equation*}
	Specialising to these functions, for any $m \in \bN$ and $\kappa > 0$ we have the upper bound
	\begin{equation*}
	H_1 ^{\Psi} \leq \inf_{\alpha_i} \sup_{y \in E} \left[
	V(y) + (1-\kappa)\sum_{i=1}^m \alpha_i \frac{Lf_i(y)}{f_i(y)} + \kappa \Psi(y).
	\right]
	\end{equation*}
	The functions $V + Lf_i/f_i$ are bounded, but a priori there is no guarantee that the supremum is attained in a given compact set. However, because $\Psi(y) \to -\infty$ as $\norm{y} \to \infty$, for any $m \in \bN$ and $\kappa > 0$, there exists a constant $\ell = \ell(m,\kappa) > 0$ such that the supremum is attained in the compact set $K_{\ell} = \{\Psi \geq -\ell\}$. Therefore, if we define $\mathcal{K}_\ell = \{\nu \in \mathcal{P}(E): \nu(K_\ell) = 1\}$, then
	\begin{align*}
	H_1 ^{\Psi} &\leq \inf_{\alpha_i} \sup_{y \in K_{\ell}} \left[
	V(y) + (1-\kappa)\sum_{i=1}^m \alpha_i \frac{Lf_i(y)}{f_i(y)} + \kappa \Psi(y)
	\right]	\\
	&= \inf_{\alpha_i} \sup_{\nu \in \mathcal{K}_\ell} \left[
	\int_E\left(V(y) + (1-\kappa)\sum_{i=1}^m \alpha_i \frac{Lf_i(y)}{f_i(y)} + \kappa \Psi(y) \right) d\nu(y)
	\right].
	\end{align*}
	For any $\ell$, we have $\mathcal{K}_\ell \subseteq \mathcal{P}_c(E)$, so that
	\begin{equation*}
	H_1 ^{\Psi} \leq \inf_{\alpha_i} \sup_{\nu \in \mathcal{P}_c(E)} \left[
	\int_E\left(V + (1-\kappa)\sum_{i=1}^m \alpha_i \frac{Lf_i}{f_i} + \kappa \Psi \right) d\nu
	\right].
	\end{equation*}
	For any $m \in \bN$, the set $\{ \alpha_i: \alpha _i \geq 0, \sum _{i=1} ^m \alpha _i =1 \}$ is compact and the infimum and supremum in the last display can be exchanged by Sion's Theorem. This yields
	\begin{align*}
	H_1 ^{\Psi} &\leq  \sup_{\nu \in \mathcal{P}_c(E)} \inf_{\alpha_i} \left[
	\int_E\left(V + (1-\kappa)\sum_{i=1}^m \alpha_i \frac{Lf_i}{f_i} + \kappa \Psi \right) d\nu
	\right]	\\
	&= \sup_{\nu \in \mathcal{P}_c(E)} \left[
	\int_E\left(V + (1-\kappa)\min_{i\leq m} \frac{Lf_i}{f_i} + \kappa \Psi \right) d\nu
	\right],
	\end{align*}
	where we have used that $\inf_{\alpha_i}\sum \alpha_i x_i = \min_i x_i$ for non-negative $x$ and $f_i \in \calD ^{++} (L)$. Taking the infimum over $\kappa$ and the limit $m \to \infty$,
	\begin{align*}
	H_1 ^{\Psi} &\leq \lim_{m \to \infty} \inf_{0 < \kappa \leq 1} \sup_{\nu \in \mathcal{P}_c(E)} \left[
	\int_E V d\nu + (1-\kappa) \min_{i \leq m} \int_E \frac{Lf_i}{f_i} d\nu + \kappa \int_E \Psi d\nu
	\right]	\\
	& = \lim_{m \to \infty} \sup_{\nu \in \mathcal{P}_c(E)} \left[
	\int_E V d\nu + \min\left\{\min_{i \leq m} \int_E \frac{Lf_i}{f_i} d\nu, \int_E \Psi d\nu \right\}
	\right].
	\end{align*}
	The limit and supremum can be shown to commute similarly to the last part of the proof of Lemma 11.12 in \cite{FengKurtz06}, leading to
	
	\begin{align*}
	H_1 ^{\Psi} &\leq  \sup_{\nu \in \mathcal{P}_c(E)} \left[
	\int_E V d\nu + \min\left\{\inf_{f \in \mathcal{D}^{++}(L)} \int_E \frac{Lf}{f} d\nu, \int_E \Psi d\nu\right\}
	\right] \\
	&\leq \sup_{\nu \in \mathcal{P}_c(E)} \left[
	\int_E V d\nu + \inf_{f \in \mathcal{D}^{++}(L)} \int_E \frac{Lf}{f} d\nu \right] .
	\end{align*}
%
	This completes the proof of the upper bound for $H^ {\Psi} _1$.
	
	Next, we move to the lower bound for $H_2 ^{\Psi}$. Take $\lambda < c_V^\ast$. We prove that for any $\varepsilon > 0$, we have $H_2 ^{\Psi} \geq \lambda - \varepsilon$. To this end, 
	we define the new semigroup $ \{ T(t) \} $ by
	\begin{align*}
	(T(t)f)(y) = \bE \left[ f(Y_t) e^{\int_0 ^t V(Y_s)ds} | Y(0)=y \right],
	\end{align*}
	set
	\begin{align*}
	    R_{\lambda} ^t f = \int _0 ^t e^{-\lambda s} T(s) g ds,
	\end{align*}
	and take $\Gamma$ to be the collection of functions $f_{\gamma}$ of the form
	\begin{align*}
	    f_{\gamma} = \int _0 ^\infty R_{\lambda} ^t 1 \gamma (dt), \ \ \gamma \in \calP ([0,\infty)).
	\end{align*}
	Then $\Gamma \subseteq \mathcal{D}^{++}(L)$ and for any $f \in \Gamma$ we have the uniform lower bound
	\begin{equation}
	\label{eq:uniformLower}
	V(y) + (1+\kappa) \frac{Lf (y)}{f(y)}\geq
	-(1-2\kappa)\|V\|, \ \ y \in E.
	\end{equation}
    Because $\Gamma \subseteq \calD ^{++} (L)$, for any $\kappa >0$ we have the lower bound
	\begin{equation*}
	H_2 ^{\Psi} \geq \sup_{f \in \Gamma} \inf_{y\in E} \left[ V(y) + (1+\kappa)\frac{Lf(y)}{f(y)} - \kappa \Psi (y) \right].
	\end{equation*}
	Due to the uniform lower bound \eqref{eq:uniformLower} and the fact that $\Psi(y) \to -\infty$ as $\norm{y} \to \infty$, for any $\kappa$ there exists an $\ell = \ell(\kappa)$ such that the infimum over $E$ is attained in the compact set $K_\ell = \{ \Psi \geq -\ell\}$. Therefore,

	\begin{align*}
	H_2 ^{\Psi} &\geq \sup_{f \in \Gamma} \inf_{y\in K_\ell} \left[ V(y) + (1+\kappa)\frac{Lf(y)}{f(y)} - \kappa \Psi(y) \right] \\
	&= \sup_{f \in \Gamma} \inf_{\nu \in \mathcal{K}_\ell} \left[ \int_E \left( V + (1+\kappa)\frac{Lf}{f} - \kappa \Psi\right)\,d\nu \right]	\\
	&= \sup_{f \in \Gamma} \inf_{\nu \in \mathcal{K}_\ell} \frac{1}{\int_E f d\nu} \left[ -\kappa \int_E (V + \Psi) f d\nu  + \int_E (1+\kappa) (V+L)f d\nu \right],
	\end{align*}
	where $\mathcal{K}_\ell = \{\nu \in \mathcal{P}(E): \nu(K_\ell) = 1\}$. The second equality follows from the fact that $\inf_\nu \int_E (a/b) d\nu = \inf_\nu (\int_E a d\nu)/ (\int_E b d\nu)$ for $b > 0$. By compactness of $\mathcal{K}_\ell$ and the fact that both $\mathcal{K}_\ell$ and $\Gamma$ are convex, the infimum and supremum are exchangable by Sion's Theorem. This gives the lower bound
	\begin{align*}
	H_2 ^{\Psi}	&\geq \inf_{\nu \in \mathcal{K}_\ell} \sup_{f \in \Gamma} \frac{1}{\int_E f d\nu} \left[ -\kappa \int_E (V + \Psi) f d\nu  + \int_E (1+\kappa) (V+L)f d\nu \right]	\\
	&\geq \inf_{\nu \in \mathcal{P}_c(E)} \sup_{f \in \Gamma}  \frac{1}{\int_E f d\nu} \left[ -\kappa \int_E (V + \Psi) f d\nu  + \int_E (1+\kappa) (V+L)f d\nu \right],
	\end{align*}
	The second estimate follows since $\mathcal{K}_\ell \subseteq \mathcal{P}_c(E)$ for any $\ell$. The rest of the proof follows arguments similar to those used in~\cite{FengKurtz06}: taking the limit $\kappa \to 0$, and moving it inside the infimum and supremum, we obtain the lower bound
	\begin{align*}
	H_2 ^{\Psi} & \geq \inf_{\nu \in \mathcal{P}_c(E)} \sup_{f \in \Gamma} \left[ \frac{1}{\int_E f d\nu}\int_E (V+L)f d\nu \right].
	\end{align*}
	Therefore, for any $\varepsilon$, there exists a $\nu_\varepsilon \in \mathcal{P}_c(E)$ such that
	\begin{align*}
	H_2 ^{\Psi} \geq \sup_{f \in \Gamma} \left[ \frac{1}{\int_E f d\nu_\varepsilon}\int_E (V+L)f d\nu_\varepsilon \right] - \varepsilon.
	\end{align*}
	%
	There exist functions $f_t \in \Gamma$ satisfying
	\begin{align*}
	\frac{\int_E (V+L) f_t d\nu}{\int_E f_t d\nu} = \lambda + \frac{ \int_E e^{-\lambda t} T(t) 1 d\nu - 1}{\int_E f_t d\nu},
	\end{align*}
	for any $\nu \in \mathcal{P}_c(E)$. Specialising to such $f_t$, we obtain
	\begin{align*}
	H_2 ^{\Psi} \geq \lambda + \frac{\int_E e^{-\lambda t} T(t)1 d\nu_\varepsilon - 1}{\int_E f_t d\nu_\varepsilon} - \varepsilon.
	\end{align*}
	Since $\limsup_{t \to \infty} \int_E e^{-\lambda t} T(t) 1 d\nu_\varepsilon = \infty$, the second term is positive for $t$ large enough, giving the bound
	\begin{equation*}
	H_2 ^{\Psi} \geq \lambda - \varepsilon.
	\end{equation*}
	This completes the proof of the lower bound for $H_2 ^{\Psi}$, and thereby the lemma.
\end{proof}

\subsection{Large deviations proofs for the empirical measure of the zig-zag process}
In this section, we prove the large deviations theorems for the empirical measures of the zig-zag process.
\subsubsection{Proof of Theorem~\ref{thm:LDP-zigzag:compact}---compact case}
\label{section:zig-zag:LDP:compact}
For the proof of Theorem~\ref{thm:LDP-zigzag:compact}, recall that the zig-zag generator takes the form
\begin{equation*}
    Lf(x,v)=v\partial_x f(x,v) + \lambda(x,v)\left[f(x,-v)-f(x,v)\right], \quad (x,v)\in E:= \mathbb{T}\times\{\pm 1\}.
\end{equation*}
It is enough to show that assumptions \ref{item:thm_LDP_compact:Feller}-\ref{item:thm_LDP_compact:principal_eigenvalue} hold for the zig-zag process on $E$, the result then follows form Theorem \ref{thm:proving_LDP:LDP_compact}.


We first verify that $L$ is a closed operator that generates the zig-zag process. Note that $L$ is a restriction of the extended generator (see Section~\ref{sec:zig-zag}).
We verify that $L$ is a closed operator. Let $\{ f_n \}$ be a sequence in $\mathcal{D}(L)$ such that $f_n \to f$ and $Lf_n \to g$, some $f,g$, both uniformly on~$E$. Then
\begin{equation*}
\lim_{n \to \infty} v \partial_x f_n(x,v) = g(x,v) - \lambda(x,v) \left[ f(x,-v) - f(x,v) \right].
\end{equation*}
We can represent $f_n (x,v)$ as 
\begin{equation*} f_n(x,v) = f_n(0,v) + v \int_0^x v \partial_x f_n(\xi,v) \, d\xi,
\end{equation*}
and from the dominated convergence theorem we obtain that
\begin{equation*}
f(x,v) = f(0,v) + v \int_0^x \left[ g(\xi,v) - \lambda(\xi,v) \left( f(\xi,-v) - f(\xi,v) \right) \right] \, d\xi.
\end{equation*}
In particular, $f \in \mathcal{D}(L)$, and $Lf = g$ follows from taking derivative $\partial_x$ and multiplying by $v$. 
	
The Feller-continuity property \ref{item:thm_LDP_compact:Feller} of the zig-zag semigroup $S$ is proven in Proposition~4 of~\cite{bierkens2017piecewise}. Since $\mathbb{T}$ is compact, this also follows from the boundedness of the continuous rates $\lambda$, see \cite[Theorem 27.6]{Davis1993}.

It remains to verify assumption \ref{item:thm_LDP_compact:principal_eigenvalue}, the principal-eigenvalue problem. Take $V \in C(E)$. We will show that for any constant $\gamma > \sup_E V$, as a map from $C(E)$ to $\mathcal{D}(L) \subseteq C(E)$, the resolvent
	\begin{align}
	\label{eq:resolvent}
	R_\gamma = \left(
	\gamma - (V+L)
	\right)^{-1},
	\end{align}
	is compact and strongly positive; here strongly positive means that if $f \geq 0$ and $f \neq 0$, then $R_\gamma f > 0$ on $E$. Given strong positivity and compactness, by the Krein-Rutman theorem there exists a strictly positive function $g \in C(E)$ and a real eigenvalue $\beta >0 $ such that
	\begin{equation*}
	\left(
	\gamma - (V+L)
	\right)^{-1} g = \beta g.
	\end{equation*}
 The resolvent maps into the domain of $L$, so that $g \in \mathcal{D}(L)$. An application of $\gamma - (V+L)$ in the eigenvalue equation gives
	\begin{equation*}
	(V+L) g = \left(\gamma - \frac{1}{\beta}\right) g.
	\end{equation*}
	This is precisely \ref{item:thm_LDP_compact:principal_eigenvalue} with function $g$ and eigenvalue $(\gamma - 1/\beta)$. 
	
	We are left with verifying that the resolvents defined by \eqref{eq:resolvent} are strongly positive and compact. For strong positivity, because $V$ is continuous on $E$, it is sufficient to prove strong positivity of $(\gamma - L)^{-1}$; see  \cite[Proposition C-III-3.3]{arendt1986one}. The resolvent $(\gamma - L)^{-1}$ exists for any $\gamma > 0$, and is given by
	\begin{equation}
	\label{eq:resolvent2}
	(\gamma - L)^{-1} f = \int_0^\infty e^{-\gamma t} S(t)f \, dt.
	\end{equation}
	The semigroup associated to the zig-zag process is irreducible in the following sense: for any $f \in C(E)$ such that $f \geq 0$ and $ f \neq 0$, 
	\begin{align*}
	\cup_{t \geq 0}\left\{
	z \in E \,:\, S(t) f(z) > 0
	\right\} = E.
	\end{align*}
	Combined with \eqref{eq:resolvent2} this implies strong positivity of $(\gamma - L) ^{-1}$; see \cite[Definition C-III-3.1]{arendt1986one}. 
	\smallskip
	
	For compactness of $R_\gamma$, let $A \subseteq C(E)$ be bounded. We show that the image $B := R_\gamma(A) \subseteq C(E)$ is bounded and equi-continuous. Compactness of the resolvent then follows from an application of the Arzelà-Ascoli theorem. To show boundedness, by dissipativity of $L$ we obtain, for any $g \in B$,
	\begin{align*}
	(\gamma -\|V\|_E) \|g \| &\leq
	\|(\gamma - (V+L))g\| \\
	& \leq \sup_{f \in A} \|f\| \\
	&< \infty.
	\end{align*}
	Hence $B$ is bounded by $C_A / (\gamma - \|V\|_E)$, where $ C_A := \sup _{f \in A} \|f\|$, and we end the proof by showing that $B$ is equi-continuous. For any $g \in B$ we have $(\gamma - (V + L))g = f$ for some $f \in A$, which implies
	\begin{equation*}
	v \partial_x g(x,v) = f(x,v) + V(x,v) g(x,v) + \gamma g(x,v) - \lambda(x,v) (g(x,-v) - g(x,v).
	\end{equation*}
	By boundedness of the functions $\lambda(x,v)$ and $V$ on $E$ and the sets $A$ and $B$,
	\begin{equation*}
	\sup_{g \in B} \|\partial_x g\| \leq C \sup_{g \in B}\|g\| + \sup_{f \in A}\|f\| < \infty.
	\end{equation*}
	Hence functions in $B$ have uniformly bounded derivatives, and as a consequence, $B$ is equi-continuous. It follows that $R_{\gamma}$ in \eqref{eq:resolvent} is compact and strongly continuous. This finishes the verification of \ref{item:thm_LDP_compact:principal_eigenvalue} and we have shown that assumptions \ref{item:thm_LDP_compact:Feller}-\ref{item:thm_LDP_compact:principal_eigenvalue} hold for the zig-zag process on the compact state space $\bT \times \{ \pm 1\}$. An application of Theorem \ref{thm:proving_LDP:LDP_compact} then proves the claimed large deviations principle.
\subsubsection{Proof of Theorem~\ref{thm:LDP-zigzag:non-compact-1d}---non-compact case}
\label{section:zig-zag:LDP:non_compact}
For notational simplicity we take $E = \bR \times \{ \pm 1\}$.
Similar to the proof of Theorem \ref{thm:LDP-zigzag:compact}, the strategy is to verify the conditions of the more general large deviations result Theorem \ref{thm:proving_LDP:LDP_non_compact}, which covers the non-compact setting. That is, it suffices to verify \ref{item:thm_LDP_compact:Feller}, \ref{item:thm_LDP_compact:tight}, \ref{item:thm_LDP_non_compact:Lyapunov} and \ref{item:thm_LDP_non_compact:mixing}.
\smallskip

Condition \ref{item:thm_LDP_compact:Feller}, Feller-continuity of the Markov semigroup, is proven in Proposition 4 of \cite{bierkens2017piecewise}. 

Next, we use Theorem 7.2 of \cite{ethier2009markov} to verify \ref{item:thm_LDP_compact:tight}. 
Define the metric $d$ on $E$ as 
	\begin{align*}
	d((x,v),(y,v')) = |x-y|_\mathbb{R} + |v-v'|,
	\end{align*} 
	and for any path $\gamma \in D_E[0,\infty)$ set
	\begin{equation*}
	w'(\gamma,\delta,T) = \inf_{\{t_i\}} \max_i \sup_{s,t \in [t_i,t_{i+1})} d(\gamma(s),\gamma(t)),
	\end{equation*}
	where the infimum is taken over finite partitions $\{ t_i \}$ of $[0,T]$ such that $\min_i |t_{i+1}-t_i| > \delta$. Theorem 7.2 of \cite{ethier2009markov} states that tightness of $\{ \bP _y : \ y \in K \}$ is equivalent to the following two conditions: 
\begin{enumerate}[label =(\arabic*)]
	\item \label{item:proof-LDP-zigzag:tightness1} For any $\varepsilon > 0$ and rational $t > 0$, there exists a compact set $K_{\varepsilon,t} \subseteq E$ such that
	\begin{equation*}
	\inf_{y \in K} \mathbb{P}_y\left[Y_t \in K_{\varepsilon,t} \right] \geq 1 - \varepsilon.
	\end{equation*}
	\item \label{item:proof-LDP-zigzag:tightness2} 
	For any $\varepsilon > 0$ and $T > 0$, there exists a $\delta > 0$ such that
	\begin{equation*}
	\sup_{y \in K} \mathbb{P}_y\left[w'\left(Y,\delta,T\right) \geq \varepsilon\right] \leq \varepsilon.
	\end{equation*}
\end{enumerate}
The spatial component $X_t$ of the zig-zag process propagates with finite speed. This implies that there exists a compact set $K_t \subseteq \mathbb{R}$ such that if $y \in K$, then
\begin{equation*}
\mathbb{P}_y \left[X_t \in K_t \right] = 1.
\end{equation*}
For any $\varepsilon > 0$ and $t>0$, taking $K_{\varepsilon,t} = K_t \times \{\pm 1\}$ gives~\ref{item:proof-LDP-zigzag:tightness1}. 
\smallskip

For part \ref{item:proof-LDP-zigzag:tightness2}, let $\varepsilon > 0$ and $T > 0$. For any realization $Y(\omega)$ of the zig-zag process on the time interval $[0,T]$, if the sojourn times $\tau_i$ satisfy $\min_i \tau_i > 2\delta$, then $w'(Y(\omega),\delta,T) \leq 2\delta$. In particular, for $\delta$ small enough, $w'(Y(\omega),\delta,T) < \varepsilon$. The probability of having at least one sojourn time that is less than $2\delta$ can be estimated uniformly over starting points $y\in K$. Let $K(T)$ denote the set of points that the zig-zag can reach in the time interval $[0,T]$ when starting in the set $K$ and set $\lambda_K = \sup_{y \in K(T)}\lambda(y)$, a uniform upper bound on the jump rates $\lambda(x,v)$. An estimate for the probability of at least one sojourn time that is less than $2\delta$ is then given by
\begin{equation*}
\sup_{y\in K} \mathbb{P}_y\left[\min_i \tau_i \leq 2\delta\right] \leq 1 - e^{-\lambda_K 2\delta}.
\end{equation*}
For any $y\in K$ we obtain the bound
\begin{align*}
\mathbb{P}_y\left[w'(Y,\delta,T) \geq \varepsilon\right] &= \mathbb{P}_y\left[\{w'(Y,\delta,T)\geq \varepsilon\} \cap \{\min_i \tau_i > 2\delta\}\right] \\
& \quad + \mathbb{P}_y\left[\{w'(Y,\delta,T)\geq \varepsilon\} \cap \{\min_i \tau_i \leq 2\delta\}\right]\\
&\leq 0 + \mathbb{P}_y\left[\{\min_i \tau_i \leq 2\delta\}\right] \\
&\leq 1 - e^{-\lambda_K 2\delta}.
\end{align*}
It follows that, as $\delta \to 0$,
\begin{equation*}
\sup_{y \in K} \mathbb{P}_y\left[\{\min_i \tau_i \leq 2\delta\}\right]  \leq 1 - e^{-\lambda_K 2\delta} \to 0,
\end{equation*}
and \ref{item:proof-LDP-zigzag:tightness2} follows from taking $\delta$ small enough that $1 - e^{-\lambda_K 2\delta} < \epsilon$.

We now move to verifying Condition \ref{item:thm_LDP_non_compact:Lyapunov}, by explicitly defining two Lyapunov functions $g_1,g_2 : E \to \mathbb{R}$ satisfying the condition. For brevity, we carry out the calculations for the case of $\gamma (x) \equiv 0$ in the switching rate $\lambda$ (see \eqref{eq:switching-intensity-condition-2}). Then we can use the following functions: for $\alpha_1,\alpha_2 \in (0,1)$ and $\beta > 0$, let
\begin{align*}
g_1(x,v) &= \alpha_1 V(x) + \beta v U'(x),\\
g_2(x,v) &= \alpha_2 U(x) + \beta v U'(x). 
\end{align*}
For non-constant $\gamma$ that is uniformly bounded by some $\bar \gamma$, the following functions can instead be used:
\begin{align*}
g_1(x,v) &= \alpha_1 V(x) + \phi(v U'(x)),\\
g_2(x,v) &= \alpha_2 U(x) + \phi(v U'(x)),
\end{align*}
where $\phi(s) = \beta \frac{1}{2} \text{sign}(s) \log(\bar{\gamma} + |s|)$ and $\beta \in (0,1)$. For example, for the choice $\beta = 1/2$ calculations analogous to the ones below hold.

We now return to the case $\gamma \equiv 0$ and take $g_1, g_2$ accordingly. Without loss of generality we can assume $g_1, g_2 \geq 0$: we can take $\beta$ small enough and if necessary add a constant to ensure that this holds. We show that for suitable $\alpha_i$ small enough, the functions $g_1,g_2$ satisfy \ref{item:thm_LDP_non_compact:Lyapunov}.

By \ref{item:thm-LDP-zig-zag:Upsi}, $V(x) \to \infty$, and by \ref{item:thm-LDP-zig-zag:Upsii}, $U'(x)/V(x) \to 0$. It follows that  $g_1$ grows to infinity as $|x| \to \infty$. Moreover, $g_2$ grows to infinity by the assumption \ref{item:thm-LDP-zig-zag:Ui} on $U$; since $g_1$ and $g_2$ are continuous, this settles part (a) of \ref{item:thm_LDP_non_compact:Lyapunov}. 

Part (b) of \ref{item:thm_LDP_non_compact:Lyapunov} requires that that $g_2$ grows faster than $g_1$ at infinity. This follows from Assumption \ref{item:thm-LDP-zig-zag:Upsii} on the potentials $U$ and $V$: both dominate the derivative $U'$, and $U$ grows faster than $V$.
\smallskip

To show that \ref{item:thm_LDP_non_compact:Lyapunov} holds for the zig-zag process, we show that both $g_1$ and $g_2$ satisfy
\begin{equation*}
e^{-g_i(x,v)} (B e^{g_i})(x,v) \to -\infty \quad |x|\to\infty.
\end{equation*}
Then since $e^{-g_i(x,v)} (B e^{g_i})(x,v)$ is continuous, the compactness of superlevel-sets follows.

By the definition of $g_1, g_2$ and the extended generator $B$, 
%
\begin{align*}
e^{-g_1(x,v)} (B e^{g_1})(x,v) &= \alpha_1 v V'(x) + \beta U''(x) + \max(vU'(x),0) \left[e^{-2v\beta U'(x)} - 1\right],	
\end{align*}
and
\begin{align*}
e^{-g_2(x,v)} (B e^{g_2})(x,v) &= \alpha_2 v U'(x) + \beta U''(x) + \max(vU'(x),0) \left[e^{-2v\beta U'(x)} - 1\right].
\end{align*}
We first verify the condition for $g_2$. For $v = +1$, we have
\begin{equation*}
e^{-g_2(x,+1)} (B e^{g_2})(x,+1) = \alpha_2 U'(x) + \beta U''(x) + \max(U'(x),0) \left[e^{-2 \beta U'(x)} - 1\right].
\end{equation*}
For $x \to + \infty$, we have $U'(x) \to + \infty$ by~\ref{item:thm-LDP-zig-zag:Ui}, so that 
\begin{align*}
e^{-g_2(x,+1)} (B e^{g_2})(x,+1) &= U'(x) \left[\alpha_2 -1 + \beta \frac{U''(x)}{U'(x)} + e^{-2\beta U'(x)}\right] \\
&\sim U'(x) (\alpha_2 - 1) \to -\infty,\quad x \to + \infty,
\end{align*}
since $U''/U' \to 0$ by~\ref{item:thm-LDP-zig-zag:Uiii} and $\alpha_2 < 1$. 
\smallskip

For $x \to -\infty$, we have $U'(x) \to -\infty$ by~\ref{item:thm-LDP-zig-zag:Ui}, in particular $U'(x) < 0$ for large $x$. Hence,
\begin{align*}
e^{-g_2(x,+1)} (B e^{g_2})(x,+1) &= U'(x) \left[\alpha_2 + \beta \frac{U''(x)}{U'(x)}\right] \\
&\sim U'(x) \alpha_2 \to -\infty,\quad x \to - \infty.
\end{align*}
For $v = -1$, the argument is analogous and we omit the details; this concludes the treatment of $g_2$.
\smallskip

We now consider $g_1$. For $v = +1$, 
\begin{align*}
e^{-g_1(x,+1)} (B e^{g_1})(x,+1) &= \alpha_1 V'(x) + \beta U''(x) + \max(U'(x),0) \left[ e^{-2v\beta U'(x)} - 1\right].
\end{align*}
In the limit $x \to + \infty$, $U'(x) \to +\infty$ and $V'(x)/U'(x) \to 0$ by~\ref{item:thm-LDP-zig-zag:Upsii}. It follows that
\begin{align*}
e^{-g_1(x,+1)} (B e^{g_1})(x,+1) &= \alpha_1 V'(x) + \beta U''(x) + U'(x) \left[ e^{-2v\beta U'(x)} - 1\right]\\
&= U'(x) \left[\alpha_1 \frac{V'(x)}{U'(x)} + \beta \frac{U''(x)}{U'(x)} + e^{-2\beta U'(x)} - 1\right]	\\
&\sim  - U'(x)  \to - \infty, \quad x \to + \infty.
\end{align*}
For $x \to -\infty$, similar to the computations for $g_2$,
\begin{align*}
e^{-g_1(x,+1)} (B e^{g_1})(x,+1) &= \alpha_1 V'(x) + \beta U''(x)\\
&= V'(x) \left[\alpha_1 + \beta \frac{U''(x)}{V'(x)}\right] \to - \infty, \quad x \to + \infty,
\end{align*}
since $U''/V' \to 0$ by~\ref{item:thm-LDP-zig-zag:Upsiii} and $V' \to - \infty$ by~\ref{item:thm-LDP-zig-zag:Upsi}. The case $v = -1$ can be handled using similar arguments. 

The preceding computations conclude the verification of Condition~\ref{item:thm_LDP_non_compact:Lyapunov}. We are left with verifying the mixing property \ref{item:thm_LDP_non_compact:mixing}.

Let $\nu_1,\nu_2 \in \mathcal{P}_c(E)$. Then there exists a compact set $K\subseteq E$ with $\nu_1(K) = \nu_2(K) = 1$. To show that \ref{item:thm_LDP_non_compact:mixing} holds, we must find $T,M > 0$ and $\rho_1,\rho_2 \in \mathcal{P}([0,T])$ such that for all $A \in \mathcal{B}(E)$,
\begin{equation*}
\int_0^T\int_E P(t,y,A) \,d\nu_1(y)d\rho_1(t) \leq M \int_0^T\int_E P(t,y,A) \,d\nu_2(y)d\rho_2(t).
\end{equation*}
By Fubini's theorem, it is sufficient to prove that for any points $y_1 \in \text{supp}(\nu_1)$ and $y_2 \in \text{supp}(\nu_2)$,
\begin{equation}\label{eq:proof-LDP-zigzag:sufficient-Fubini}
\int_0^T P(t,y_1,A)\,d\rho_1(t) \leq M \int_0^T P(t,y_2,A)\,d\rho_2(t),
\end{equation}
with $\rho_1,\rho_2,T,M$ independent of $y_1,y_2$. To that end, let $K\subseteq E$ be a compact set containing the support of both $\nu_1$ and $\nu_2$. Without loss of generality, we can take $K$ of the form $K_\mathbb{R} \times \{\pm 1\}$, where $K_\mathbb{R}$ is a closed interval. For $t_1 > 0$, let $K(t_1)$ be the set of points that the zig-zag process with speed one can reach in the time interval $[0,t_1]$ when starting in $K$:
\begin{equation*}
K(t_1) = \left\{y \in E\,:\, \text{dist}_E(y,K) \leq t_1\right\}.
\end{equation*}
We prove the inequality \eqref{eq:proof-LDP-zigzag:sufficient-Fubini} for arbitrary points $y_1,y_2 \in K$, using the following two steps; in what follows we set $\mu = \Leb \otimes \unif_{\pm 1}$.
\begin{enumerate}[label=(\roman*)]
	\item \label{item:proof-zigzag:step_one} For any $t_1 > 0$ and with $\rho_1$ the uniform distribution over $[0,t_1]$, there is a positive constant $C_{K,t_1}$ depending only on $K$ and $t_1$ such that for any $T > t_1$, we have
	\begin{equation*}
	\int_0^T P(t,y_1,A)\,d\rho_1(t) \leq C_{K,t_1} \cdot \mu\left(A \cap K(t_1)\right), \quad \text{ for all } A \in \mathcal{B}(E),
	\end{equation*}
	with $\mu$ as the reference measure on $\mathcal{B}(E)$.
	\item \label{item:proof-zigzag:step_two} There exist positive constants $T > 0$ and $C'_{K,T}$ such that with $\rho_2$ the uniform distribution over $[0,T]$, we have
	\begin{equation*}
	\int_0^T P(t,y_2,A)\,d\rho_2(t) \geq C'_{K,T}\cdot \mu\left(A \cap K(t_1)\right), \quad \text{ for all } A \in \mathcal{B}(E).
	\end{equation*}
\end{enumerate}
Suppose \ref{item:proof-zigzag:step_one} and \ref{item:proof-zigzag:step_two} hold. Then the estimate \eqref{eq:proof-LDP-zigzag:sufficient-Fubini} also holds, with $M = C_{K,t_1}/C'_{K,T}$, some $t_1 < T$.
\smallskip

To verify \ref{item:proof-zigzag:step_one}, note that the measure
\begin{equation*}
\mu_{y_1}(A) = \int_0^T P(t,y_1,A)\,d\rho_1(t)
\end{equation*}
is absolutely continuous with respect to $\mu= \Leb \otimes \unif_{\pm 1}$ and its density is uniformly bounded in $K$. Now~\ref{item:proof-zigzag:step_one} follows since $P(t,K,A) = 0$ whenever $A \cap K(t_1) = \emptyset$ and $t \leq t_1$.
\smallskip

Next, we use Lemma 8 of \cite{bierkens2019ergodicity} to show \ref{item:proof-zigzag:step_two}. To that end, recall that a tuple $(y,y')$ in $E \times E$ is called reachable if there exists an admissible path from $y$ to $y'$. By Theorem 4 of \cite{bierkens2019ergodicity}, any two points are reachable as long as the potential $U$ has at least one non-degenerate local minimum (which is trivially satisfied on $\mathbb R$ under our assumptions) and satisfies $U\in C^3(\mathbb{R})$. 
\smallskip

By Lemma 8 in \cite{bierkens2019ergodicity}, for any two points $y_a = (x_a,v_a)$ and $y_b = (x_b,v_b)$ in $E$, there are open neighborhoods $U_{y_a}$ of $x_a$ and $U_{y_b}$ of $x_b$, a time interval $(t_0,t_0 + \varepsilon]$ and a constant $c > 0$ such that for all $x_a'\in U_{y_a}$ and $t \in (t_0,t_0 + \varepsilon]$,
\begin{equation*}
P\left(t,(x_a',v_a),A_\mathbb{R} \times \{v_b\}\right) \geq c \, \Leb(A \cap U_{z_b}),\quad \text{ for all }A_\mathbb{R} \in \mathcal{B}(\mathbb{R}).
\end{equation*}
The spatial part of $K \times K(t_1)$ can be covered by open squares associated to all pairs of start and final points $y_a$ and $y_b$, with $y_a \in K$ and $ y_b \in K(t_1)$, as
\begin{equation*}
K_\mathbb{R} \times K(t_1)_\mathbb{R} \subseteq \bigcup_{(y_a,y_b)} U_{y_a} \times U_{y_b},
\end{equation*}
where each $U_{y}$ is an open interval in $\mathbb{R}$. By compactness, there exists a finite subcover by open squares $U_{y_a^i} \times U_{y_b^i}$ corresponding to pairs $(y_a^i,y_b^i)$,
\begin{equation*}
K_\mathbb{R} \times K(t_1)_\mathbb{R} \subseteq \bigcup_{i = 1}^N U_{y_a^i} \times U_{y_b^i}.
\end{equation*}
Thereby, the set $K \times K(t_1) \subseteq E \times E$ is covered as
\begin{equation*}
K \times K(t_1) \subseteq \bigcup_{i = 1}^N \left[ \left(U_{y_a^i}\times \{\pm 1\}\right) \times \left(U_{y_b^i}\times \{\pm 1\}\right)\right].
\end{equation*}
Hence for each $z = (x,v) \in K$, there are finitely many open sets $U_{y_b^i}$ covering $K(t_1)_\mathbb{R}$, with corresponding constants $c_i,t_i,\varepsilon_i$ such that for all $t \in (t_i,t_i+\varepsilon_i]$,
\begin{equation}\label{eq:proof-LDP-zigzag:bound-P-below}
P(t,y,A_\mathbb{R} \times \{v_b^i\}) \geq c_i \Leb(A_\mathbb{R} \cap U_{z_b^i}), \quad \text{ for all } A_\mathbb{R} \in \mathcal{B}(\mathbb{R}).
\end{equation}
For any $A = A_\mathbb{R} \times A_\pm \in \mathcal{B}(E)$, write
\begin{align*}
A^+ &:= A \cap (\mathbb{R} \times \{+1\}),\\
A^- &:= A \cap (\mathbb{R} \times \{-1\}).
\end{align*}
Then with $T > 0$ large enough for all intervals $(t_i,t_i+\varepsilon_i]$ to be contained in $[0,T]$, taking $\rho_2 = \unif([0,T])$, for any $z \in K$ it holds that
\begin{align*}
\int_0^T P(t,y,A)\,d\rho_2(t) &\geq \int_0^T P(t,y,A) \sum_i \mathbf{1}_{(t_i,t_i+\varepsilon_i]}(t)\,d\rho_2(t)	\\
&= \frac{1}{T}\sum_i \int_{t_i}^{t_i+\varepsilon_i} \left[P(t,y,A^+) + P(t,y,A^-)\right]\, dt.
\end{align*}
In each time interval $(t_i,t_i+\varepsilon_i]$, at least one transition probability is bounded from below as in~\eqref{eq:proof-LDP-zigzag:bound-P-below}, while the other one can be bounded from below by zero. Thereby,
\begin{align*}
\int_0^T P(t,y,A)\,d\rho_2(t) &\geq \frac{1}{T} \sum_i \varepsilon_i \cdot c_i\cdot \Leb(A_\mathbb{R} \cap U_{y_b^i})	\\
&\geq \frac{1}{T} \min_i(\varepsilon_i c_i) \sum_i \mu\left[A \cap (U_{y_b^i}\times\{\pm 1\})\right]	\\
&\geq \frac{1}{T} \min_i(\varepsilon_ic_i) \cdot \mu \left[A \cap \bigcup_i \left(U_{y_b^i}\times\{\pm 1\}\right) \right]	\\
&\geq \frac{1}{T} \min_i(\varepsilon_ic_i) \cdot \mu\left[A \cap K(t_1)\right],
\end{align*}
where the last inequality follows from $K(t_1)$ being covered by the $U_{y_b^i}\times\{\pm 1\}$. Hence \ref{item:proof-zigzag:step_two} follows with $C_{K,T}' = \min_i(\varepsilon_i c_i)/T$.
\smallskip

This finishes the verification of Condition \ref{item:thm_LDP_non_compact:mixing}, and thereby the proof of Theorem \ref{thm:LDP-zigzag:non-compact-1d}.
\subsection{Derivation of the explicit form of the rate function}
\label{sec:proofs-rate-function}

Here we prove the results described in Section~\ref{sec:rate}. Recall that the state space is now taken as $E = \mathbb{T} \times \{ \pm 1\}$.

Suppose $\mu$ is absolutely continuous with respect to $\nu_0$ and write $\frac{d \mu}{d \nu_0}(x,v) = \rho(x,v)$ for the Radon-Nikodym density of $\mu$ with respect to $\nu_0$, where $\rho$ is assumed to be absolutely continuous.
Define a mapping $H : \mathcal D^+(L) \rightarrow \R$ by
\begin{equation}
\label{eq:H-functional}
 H (u) = \int_E \frac{ L u}{u}  \ d \mu.
\end{equation}
We compute
\begin{equation}
\label{eq:first-steps}
\begin{aligned} H(u) & = \sum_{v\in \{-1,+1\}} \int_{\mathbb{T}} \frac{Lu}{u} (x,v) \rho(x,v) \ d x \\
 & = \int_{\mathbb{T}} \left\{\frac{d \log u^+}{d x} + \lambda^+ \left( \frac{u^-}{u^+} - 1 \right)  \right\} \rho^+ \ d x +\int_{\mathbb{T}} \left\{ - \frac{d \log u^-}{d x} + \lambda^- \left( \frac{u^+}{u^-} - 1 \right) \right\} \rho^- \ d x \\
 & = \int_{\mathbb{T}} \left\{ -\log u^+ \frac{d \rho^+}{dx} + \lambda^+ \rho^+ \left( \frac{u^-}{u^+} - 1 \right) \right\} \ d x +  \int_{\mathbb{T}} \left \{ \log u^- \frac{d \rho^-}{d x} + \lambda^- \rho^-\left( \frac{ u^+}{u^-} - 1 \right) \right\} \ d x.
 \end{aligned}
\end{equation}

\begin{lemma}
\label{lem:to-infinity}
Suppose $\rho \in C(E)$ is absolutely continuous and satisfies
\[ \nu_0  \left\{\dfrac{d\rho^+}{d x} \neq \dfrac{d \rho^-}{dx} \right\} > 0.\]
Then $\inf_{u \in \mathcal D^+(L)} H(u)=-\infty$.
\end{lemma}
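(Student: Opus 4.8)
The plan is to exhibit an explicit sequence $\{u_n\}_n \subset \mathcal{D}^+(L)$ along which $H(u_n) \to -\infty$, where $H$ is the functional in \eqref{eq:H-functional}. The key observation is that one should restrict to functions that do not depend on the velocity: take $u(\cdot,+1) = u(\cdot,-1) = w$ for a strictly positive $w \in C^1(\mathbb{T})$. Such a $u$ lies in $\mathcal{D}^+(L) = \{f \in C^1(\mathbb{T}\times\{\pm 1\}) : f > 0\}$, and for it the two switching terms in \eqref{eq:first-steps} vanish identically, since $u^-/u^+ \equiv u^+/u^- \equiv 1$. Hence \eqref{eq:first-steps} collapses to
\[ H(u) = -\int_{\mathbb{T}} \log w \cdot \psi \, dx, \qquad \psi := \frac{d\rho^+}{dx} - \frac{d\rho^-}{dx} \in L^1(\mathbb{T}). \]
The hypothesis $\nu_0\{d\rho^+/dx \neq d\rho^-/dx\} > 0$ says precisely that $\psi$ is not the zero element of $L^1(\mathbb{T})$ (recall $\nu_0 = \Leb \otimes \unif_{\pm 1}$, so a positive $\nu_0$-measure forces positive Lebesgue measure of $\{x : \psi(x)\neq 0\}$).

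Next I would produce a smooth function pairing positively with $\psi$. Since $\psi$ is a nonzero element of $L^1(\mathbb{T})$, it is nonzero as a distribution on $\mathbb{T}$, so there exists $g \in C^\infty(\mathbb{T}) \subseteq C^1(\mathbb{T})$ with $\int_{\mathbb{T}} g\psi\,dx \neq 0$; after replacing $g$ by $-g$ if necessary, $\int_{\mathbb{T}} g\psi\,dx > 0$. Setting $w_n := e^{ng}$ and letting $u_n$ be the associated velocity-independent function, the displayed identity yields $H(u_n) = -n\int_{\mathbb{T}} g\psi\,dx \to -\infty$ as $n\to\infty$, so $\inf_{u\in\mathcal{D}^+(L)} H(u) = -\infty$.

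The remaining points are routine bookkeeping rather than substance: verifying $u_n \in \mathcal{D}^+(L)$ (immediate, as $w_n \in C^1(\mathbb{T})$ and $w_n > 0$); justifying the integration by parts already carried out in passing to \eqref{eq:first-steps}, namely $\int_{\mathbb{T}} (\log w)' \rho^\pm \, dx = -\int_{\mathbb{T}} \log w\, (\rho^\pm)' \, dx$, which is valid because $\log w \in C^1(\mathbb{T})$, $\rho^\pm$ is absolutely continuous, and $\mathbb{T}$ has no boundary; and the translation of the $\nu_0$-null-set hypothesis into positivity of the Lebesgue measure of $\{x : \psi(x)\neq 0\}$. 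I do not expect a genuine obstacle; the only mildly non-mechanical step is the duality argument producing $g$, and the conceptual content is simply that dropping the $v$-dependence annihilates the nonnegative switching contribution and leaves a linear functional in $\log w$ tested against the nonzero $L^1$ function $\psi$, which is manifestly unbounded below as $w$ ranges over the strictly positive $C^1$ functions on $\mathbb{T}$.
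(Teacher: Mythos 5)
Your proposal is correct and follows essentially the same route as the paper: take velocity-independent $u$ so that the switching terms in \eqref{eq:first-steps} vanish, leaving the linear functional $H(u)=-\int_{\mathbb{T}}\log w\,\bigl(\tfrac{d\rho^+}{dx}-\tfrac{d\rho^-}{dx}\bigr)\,dx$, which is unbounded below. The only (harmless) difference is that the paper plugs in $w=\exp\bigl(\pm t\{\tfrac{d\rho^+}{dx}-\tfrac{d\rho^-}{dx}\}\bigr)$ directly, yielding $-t\int_{\mathbb{T}}\bigl(\tfrac{d\rho^+}{dx}-\tfrac{d\rho^-}{dx}\bigr)^2dx$, whereas you test against a smooth $g$ with $\int_{\mathbb{T}} g\,\psi\,dx>0$; your variant is in fact slightly more careful, since it keeps $u_n$ manifestly inside $\mathcal{D}^+(L)$ even when $\psi$ is only in $L^1$.
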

\begin{proof}
Let $u^+_t(x) = u^-_t(x) = \exp \left( - t \left\{ \dfrac{d \rho^+}{d x} - \dfrac{d \rho^-}{d x} \right\}\right)$.
From~\eqref{eq:first-steps} it follows that 
\[ H(u_t) = - t \int_{\mathbb{T}} \left( \dfrac{d \rho^+}{d x} - \dfrac{d \rho^-}{dx} \right)^2 \ d x.\] Now let $t \rightarrow \infty$.
\end{proof}

\begin{lemma}
\label{lem:reparametrization}
Suppose $\rho \in C(E)$ is absolutely continuous and $\frac{d \rho^+}{dx} = \frac{d \rho^-}{dx}$ for all $x \in \mathbb{T}$. Then $\mathcal I$ admits the representation
\begin{equation} \label{eq:I-in-eta} \mathcal I(\mu) = - \inf_{\eta \in C(\mathbb{T})} \int_{\mathbb{T}} \left\{ - \rho' \eta + \lambda^+ \rho^+ (\exp(-\eta) - 1) + \lambda^- \rho^- (\exp(\eta) - 1) \right\} \ d x.\end{equation}
\end{lemma}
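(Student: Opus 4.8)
The plan is to observe that, under the hypothesis $\frac{d\rho^+}{dx}=\frac{d\rho^-}{dx}$, the functional $H(u)$ from~\eqref{eq:H-functional} depends on $u\in\mathcal D^+(L)$ only through the single scalar field $\eta:=\log u^+-\log u^-$. Writing $\rho':=\frac{d\rho^+}{dx}=\frac{d\rho^-}{dx}$ and substituting into the last line of~\eqref{eq:first-steps}, the two logarithmic terms collapse to $-\rho'(\log u^+-\log u^-)=-\rho'\eta$, while $u^-/u^+=e^{-\eta}$ and $u^+/u^-=e^{\eta}$, so that
\[ H(u)=\int_{\mathbb{T}}\Bigl\{-\rho'\eta+\lambda^+\rho^+(e^{-\eta}-1)+\lambda^-\rho^-(e^{\eta}-1)\Bigr\}\,dx=:G(\eta). \]
Since $u\in\mathcal D^+(L)$ means exactly that $u^\pm\in C^1(\mathbb{T})$ and $u^\pm$ are (by compactness of $\mathbb{T}$) bounded away from $0$, we have $\eta\in C^1(\mathbb{T})$; conversely, for any $\eta\in C^1(\mathbb{T})$ the choice $u^+:=e^{\eta}$, $u^-:\equiv 1$ lies in $\mathcal D^+(L)$ and produces $H(u)=G(\eta)$. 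Hence $\{H(u):u\in\mathcal D^+(L)\}=\{G(\eta):\eta\in C^1(\mathbb{T})\}$, and therefore $\inf_{u\in\mathcal D^+(L)}H(u)=\inf_{\eta\in C^1(\mathbb{T})}G(\eta)$.

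The next step is to upgrade the infimum over $C^1(\mathbb{T})$ to an infimum over $C(\mathbb{T})$. Because $\mathbb{T}$ is compact, the coefficients $\rho'$, $\lambda^\pm$ and $\rho^\pm$ are bounded, so if $\eta_n\to\eta$ uniformly on $\mathbb{T}$ then $e^{\pm\eta_n}\to e^{\pm\eta}$ and $\rho'\eta_n\to\rho'\eta$ uniformly, and consequently $G(\eta_n)\to G(\eta)$; in particular $G(\eta)$ is finite for every $\eta\in C(\mathbb{T})$. Since $C^1(\mathbb{T})$ is sup-norm dense in $C(\mathbb{T})$ (e.g.\ via mollification), for each $\eta\in C(\mathbb{T})$ we may pick such a sequence $\eta_n\in C^1(\mathbb{T})$, giving $\inf_{C^1(\mathbb{T})}G\le\lim_n G(\eta_n)=G(\eta)$; taking the infimum over $\eta\in C(\mathbb{T})$ yields $\inf_{C^1(\mathbb{T})}G\le\inf_{C(\mathbb{T})}G$, and the reverse inequality is trivial. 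Combining this with the previous paragraph and with the formula $\mathcal I(\mu)=-\inf_{u\in\mathcal D^+(L)}H(u)$ established in Theorem~\ref{thm:LDP-zigzag:compact} gives $\mathcal I(\mu)=-\inf_{\eta\in C(\mathbb{T})}G(\eta)$, which is precisely~\eqref{eq:I-in-eta}.

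None of this is deep: the computation in the first step is bookkeeping starting from~\eqref{eq:first-steps}, and the only point that needs a little care is the density step, namely verifying that enlarging the class of competitors from those $\eta=\log(u^+/u^-)$ with $u\in\mathcal D^+(L)$ (which are exactly the $C^1$ functions on $\mathbb{T}$) to all of $C(\mathbb{T})$ leaves the infimum unchanged. This is handled by the boundedness of all coefficients on the compact torus together with a routine mollification argument, so I expect no genuine obstacle here.
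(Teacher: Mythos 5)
Your proposal is correct and follows essentially the same route as the paper's proof: rewrite $H(u)$ from~\eqref{eq:first-steps} so it depends only on $\eta=\log(u^+/u^-)$, identify the admissible $\eta$ with $C^1(\mathbb{T})$ (your choice $u^+=e^\eta$, $u^-\equiv 1$ versus the paper's symmetric $u^\pm=e^{\pm\eta/2}$ is immaterial), and pass from $C^1(\mathbb{T})$ to $C(\mathbb{T})$ by density and continuity of the functional under uniform convergence. Your write-up merely spells out the density/continuity step in slightly more detail than the paper does.
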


\begin{proof}
Write $\rho' : =\frac {d \rho^+}{dx}$, and note that by our assumption $\rho' = \frac{d \rho^-}{dx}$.
By~\eqref{eq:first-steps} we may write
\[ H(u) = \int_{\mathbb{T}} \left\{ - \log \left( u^+/u^- \right) \rho' + \lambda^+\rho^+ (u^-/u^+ -1) + \lambda^- \rho^- (u^+/u^- - 1) \right\} \ d x.\]
We see that only the ratio $u^+/u^-$ determines the value of $H(u)$. To any choice of $u \in \mathcal D^+(L)$ we may associate $\eta = \log u^+ - \log u^- \in C^1(\mathbb{T})$, and correspondingly, to any $\eta \in C^1(\mathbb{T})$ we can associate $u \in \mathcal D^+(L)$ by letting
\[ u^+(x) = \exp(\tfrac 1 2 \eta(x)), \quad u^-(x) = \exp(-\tfrac 1 2 \eta(x)), \quad x \in \mathbb{T}.\]
By the continuous dependence of $H$ on $\eta$, and the fact that $C^1(\mathbb{T})$ is dense in $C(\mathbb{T})$, we obtain the stated representation of $I(\mu)$.
\end{proof}

\begin{lemma}
\label{lem:pointwise-minimization}
Suppose $\rho \in C^1(E)$ and $\frac{ d \rho^+}{d x} = \frac{d \rho^-}{dx}$ for all $x \in \mathbb{T}$. Furthermore suppose $\lambda^- \lambda^+ \rho^- \rho^+ > 0$ on $\mathbb{T}$, and $\lambda^{\pm}$ are continuous. Then $\mathcal I$ is given by~\eqref{eq:rate-function}.
\end{lemma}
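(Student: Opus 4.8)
The plan is to start from the representation of the rate function supplied by Lemma~\ref{lem:reparametrization}. Writing $\rho' := \tfrac{d\rho^+}{dx} = \tfrac{d\rho^-}{dx}$ and, for $x \in \mathbb{T}$ and $s \in \R$,
\[
F_x(s) := -\rho'(x)\,s + \lambda^+\rho^+(x)\,(e^{-s}-1) + \lambda^-\rho^-(x)\,(e^{s}-1),
\]
that lemma gives $\mathcal I(\mu) = -\inf_{\eta \in C(\mathbb{T})} \int_{\mathbb{T}} F_x(\eta(x))\,dx$. The crucial point is that the integrand depends on $\eta$ only through its pointwise value, so the variational problem decouples into the scalar minimization of $s \mapsto F_x(s)$ at each fixed $x$. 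If this is minimized at a point $\eta^*(x)$ and if $x \mapsto \eta^*(x)$ turns out to be continuous, then $\eta^* \in C(\mathbb{T})$ is admissible, and comparing the pointwise bound $\int_{\mathbb{T}} F_x(\eta(x))\,dx \geq \int_{\mathbb{T}} F_x(\eta^*(x))\,dx$ (valid for all $\eta$) with the value at $\eta = \eta^*$ shows $\mathcal I(\mu) = -\int_{\mathbb{T}} F_x(\eta^*(x))\,dx$.

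I would then perform the pointwise minimization. With $a := \lambda^+\rho^+(x)$, $b := \lambda^-\rho^-(x)$, $c := \rho'(x)$, the hypothesis $\lambda^+\lambda^-\rho^+\rho^- > 0$ gives $a,b>0$, so $F_x$ is strictly convex and coercive, and its unique minimizer solves $F_x'(s) = -c - a e^{-s} + b e^{s} = 0$, i.e.\ the quadratic $b e^{2s} - c e^{s} - a = 0$ in $e^s$, whose single positive root gives
\[
\eta^*(x) = \log\!\left( \frac{\rho' + \sqrt{(\rho')^2 + 4\lambda^+\lambda^-\rho^+\rho^-}}{2\lambda^-\rho^-} \right).
\]
Because $\rho \in C^1(\mathbb{T})$, the $\lambda^\pm$ are continuous and $\lambda^-\rho^- > 0$ on the compact torus, this $\eta^*$ is continuous, which is exactly what the decoupling argument requires.

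It remains to compute $F_x(\eta^*(x))$ and identify the outcome with~\eqref{eq:rate-function}. Setting $D := \sqrt{(\rho')^2 + 4ab}$, the elementary identities $a e^{-\eta^*} = \tfrac12(D - c)$ and $b e^{\eta^*} = \tfrac12(D + c)$ (which also re-verify $F_x'(\eta^*) = 0$) give $F_x(\eta^*) = -c\,\eta^* + D - a - b$, whence
\[
\mathcal I(\mu) = \int_{\mathbb{T}} \Big( \rho'\,\eta^* - \sqrt{4\lambda^+\lambda^-\rho^+\rho^- + (\rho')^2} + \lambda^+\rho^+ + \lambda^-\rho^- \Big)\, dx .
\]
Finally I would rewrite $\eta^*$ in the claimed form: since $\arcsinh(\xi) = \log(\xi + \sqrt{\xi^2+1})$, taking $\xi = \rho'/(2\sqrt{ab})$ yields $\arcsinh(\xi) = \log\big((\rho' + D)/(2\sqrt{ab})\big)$, and adding $\tfrac12\log(a/b) = \log(\sqrt{ab}/b)$ collapses this to $\log\big((\rho'+D)/(2b)\big) = \eta^*$. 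Substituting produces exactly the integrand in~\eqref{eq:rate-function}, completing the argument.

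The only subtle step is the decoupling of the infimum and the integral, which is legitimate precisely because the pointwise minimizer $\eta^*$ is a genuine continuous function on $\mathbb{T}$; this is where the positivity hypothesis $\lambda^+\lambda^-\rho^+\rho^- > 0$ (together with continuity of $\rho$, $\rho'$ and $\lambda^\pm$) enters. Everything else is the scalar optimization above and the algebraic manipulation into the $\arcsinh$ form.
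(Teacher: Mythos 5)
Your proposal is correct and follows essentially the same route as the paper: it starts from the $\eta$-representation of Lemma~\ref{lem:reparametrization}, minimizes the integrand pointwise in $\eta$ (using strict convexity from $\lambda^+\lambda^-\rho^+\rho^->0$), notes the minimizer is continuous hence admissible, and identifies the resulting expression with~\eqref{eq:rate-function} via the $\arcsinh$ identity. Your version merely makes explicit the quadratic-root form of the minimizer, the evaluation $F_x(\eta^*)=-\rho'\eta^*+\sqrt{(\rho')^2+4\lambda^+\lambda^-\rho^+\rho^-}-\lambda^+\rho^+-\lambda^-\rho^-$, and the interchange of infimum and integral, all of which the paper leaves implicit.
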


\begin{proof}
Differentiating the integrand in~\eqref{eq:I-in-eta} pointwise with respect to $\eta$ gives the first order condition
\[ -\rho' - \lambda^+ \rho^+ \exp(-\eta) + \lambda^- \rho^- \exp(\eta) = 0,\]
which is solved uniquely by 
\[ \eta = \tfrac 1 2 \log \left( \frac{\lambda^+ \rho^+}{\lambda^- \rho^-} \right) + \arcsinh \left( \frac{ \rho'}{2 \sqrt{ \lambda^+ \lambda^- \rho^+ \rho^-}}\right),\]
as long as $\lambda^- \lambda^+ \rho^- \rho^+ \neq 0$. Furthermore $\eta \in C(\mathbb{T})$ by the conditions on $\lambda$ and $\rho$.
The second order derivative with respect to $\eta$ is given by
\[ \lambda^+ \rho^+ \exp(-\eta) + \lambda^- \rho^- \exp(\eta) \geq 0,\]
which shows that the critical value of $\eta$ corresponds to a pointwise global minimum of the integrand. 
\end{proof}


\begin{proof}[Proof of Proposition~\ref{prop:expression-ratefunction}]
The expression in case of equality follows from Lemma~\ref{lem:pointwise-minimization}. The result for unequal derivatives follows from Lemma~\ref{lem:to-infinity}.
\end{proof}

\begin{proof}[Proof of Proposition~\ref{prop:no-v-dependence}]
We inspect the dependence of the various terms in the integrand of the expression~\eqref{eq:rate-function}
$I(\mu_c)$ on $c$. For the first term, interchanging integral and derivative,
\begin{align*}
& \frac{d}{dc}  \int_{\mathbb{T}} \rho' \log \left( \frac{\lambda^+ (\rho + c)}{\lambda^- (\rho-c)}\right) \ d x  = \int_{\mathbb{T}} \rho' \left(\frac{1}{\rho + c} + \frac 1 {\rho-c} \right) \ d x \\
 & =  \int_{\mathbb{T}} \frac{d}{dx} \left( \log(\rho +c) + \log(\rho -c) \right) \ d x = 0.
\end{align*}
The following terms (i.e. the $\arcsinh$ and the square root) in the expression for $I(\mu_c)$ are decreasing with respect to the value of $\rho^+ \rho^- = \rho^2 -c^2$. It follows that the integrands are minimized at $c = 0$. Finally, we have that
\[ \int_{\mathbb{T}}  (\lambda^+ \rho^+ + \lambda^- \rho^- )  \ d x = \int_{\mathbb{T}} \left\{  (  \lambda^+ + \lambda^-) \rho + c (\lambda^+ - \lambda^-)  \right\} \ d x.\]
The linear term in $c$ vanishes since $\int_{\mathbb{T}} \{ \lambda^+ - \lambda^- \}  \ d x= \int_{\mathbb{T}} U' \ dx = 0$. It follows that $c = 0$ minimizes $c \mapsto I(\mu_c)$. The stated expression for $I(\mu_0)$ is a straightforward manipulation of~\eqref{eq:rate-function}.
\end{proof}

%
\subsection*{Acknowledgment}
The authors thank Jin Feng for several helpful discussions on topics directly related to this paper. J.\ Bierkens acknowledges support by the Dutch Research Council (NWO) for the research project \textit{Zig-zagging through computational barriers} with project number 016.Vidi.189.043. P.\ Nyquist acknowledges funding from the NWO grant 613.009.101 at the outset of this work. M.\ Schlottke acknowledges financial support through NWO grant 613.001.552.
%
%

\newcommand{\etalchar}[1]{$^{#1}$}

\end{document}